\theoremstyle{plain}
\newtheorem{theorem}{Theorem}[section]
\newtheorem{corollary}[theorem]{Corollary}
\newtheorem{lemma}[theorem]{Lemma}
\newtheorem{proposition}[theorem]{Proposition}
\theoremstyle{definition}
\theoremstyle{remark}
\newtheorem*{remark}{Remark}
\newtheorem*{example}{Example}
\newcommand{\Z}{\mathbb{Z}}
\newcommand{\Q}{\mathbb{Q}}
\newcommand{\GL}{\mathrm{GL}}
\newcommand{\ch}{\mathrm{char}}
\newcommand{\F}{\mathbb{F}}
\newcommand{\p}{\mathbb{P}}
\newcommand{\im}{\mathrm{Im}}
\newcommand{\re}{\mathrm{Re}}
\newcommand{\onehalf}{\frac{1}{2}}
\newcommand{\R}{\mathbb{R}}
\newcommand{\HH}{\mathbb{H}}
\newcommand{\N}{\mathbb{N}}
\newcommand{\SL}{\operatorname{SL}}
\newcommand{\Cl}{\mathrm{Cl}}
\newcommand{\Leg}{\mathrm{Leg}}
\newcommand{\C}{\mathbb{C}}
\newcommand{\leg}[2]{\genfrac{(}{)}{}{}{#1}{#2}}
\numberwithin{equation}{section}
\begin{document}
\title[Distribution of values of Gaussian hypergeometric functions]{Distribution of values of Gaussian hypergeometric functions}

\dedicatory{In celebration of Don Zagier's 70th birthday}
\author{Ken Ono, Hasan Saad, \and Neelam Saikia}

\address{Department of Mathematics, University of Virginia, Charlottesville, VA 22904}
 \email{ken.ono691@virginia.edu}
 \email{hs7gy@virginia.edu}
 \email{nlmsaikia1@gmail.com}

\keywords{Gaussian hypergeometric functions, Distributions, Elliptic curves}
\subjclass[2000]{11F46, 11F11, 11G20, 11T24, 33E50}

\begin{abstract}  In the 1980's, Greene defined {\it hypergeometric functions over finite fields} using Jacobi sums. The framework of his theory establishes that these functions possess many properties that are analogous to those of the classical hypergeometric series studied by Gauss and Kummer. These functions have played important roles in the study of Ap\'ery-style supercongruences, the Eichler-Selberg trace formula, Galois representations, and zeta-functions of arithmetic varieties. We study  the value distribution (over large finite fields) of natural families of these functions. For the $_2F_1$ functions, the limiting distribution is  semicircular (i.e. $SU(2)$), whereas the distribution for the $_3F_2$ functions is the  {\it Batman} distribution for the traces of the real orthogonal group $O_3$.
\end{abstract}
\maketitle
\section{Introduction and statement of results}

 In the '80s, Greene  \cite{GreenePhD, greene} defined  {\it Gaussian hypergeometric functions} over finite fields
using Jacobi sums.
He developed the foundation of a beautiful theory where these functions possess many properties
that are analogous to those of classical hypergeometric functions. These properties include transformation laws, explicit evaluations, and contiguous relations. These functions have played central roles in the study of combinatorial supercongruences
\cite{Ahlgren, AO, KC, McCarthyRV, McCarthyOsburn, MOS, Mortenson, OsburnSchneider, OSS, OSZ,  OZ, Papanikolas},
Dwork hypersurfaces \cite{BRS, McCarthyRIMS}, Galois representations \cite{LLL, Long}, $L$-functions of elliptic curves \cite{BK, BS, BarmanTripathi, Fuselier, Lennon, McCarthyPJM, ono, Rouse, TY},  hyperelliptic curves \cite{BKS, BSM-JNT}, $K3$ surfaces \cite{ono-3, DKSSVW, ono}, Calabi-Yau threefolds \cite{ahlgren-ono, AO, Z},  the Eichler-Selberg trace formula \cite{fop, Fuselier, FuselierHecke, FM, Lennon1, MP, Papanikolas, PujahariSaikia}, among other topics. This body of work meshes well with the framework established by Katz \cite{Katz} and Roberts and Villegas \cite{RV} on the analysis and arithmetic of ``hypergeometric varieties''.

Here we initiate the study of the value distribution of Greene's functions.
We first recall  Greene's original definition. If $A_1,A_2,\ldots,A_n$ and $B_1,B_2,\ldots,B_{n-1}$ are multiplicative characters of the finite field $\F_q$, where $q=p^r$, then we have the Gaussian hypergeometric function
\begin{align}
{_{n}F_{n-1}}\left(\begin{array}{cccc}
A_1, & A_2,& \ldots, & A_n\\
~& B_1, &\ldots, &B_{n-1}
\end{array}\mid x\right)_q:=\frac{q}{q-1}\sum_{\chi}{A_1\chi\choose\chi}{A_2\chi\choose B_1\chi}\cdots{A_n\chi\choose B_{n-1}\chi}\chi(x),\notag
\end{align}
where the summation is over the multiplicative characters\footnote{For characters $\chi$, we have that $\chi(0):=0.$} of $\F_q^{\times},$ and where  ${A\choose B}$ is the normalized Jacobi sum $J(A,B)$, defined by
\begin{align}\label{binomial}
{A\choose B}:=\frac{B(-1)}{q}J(A,\overline{B}):=\frac{B(-1)}{q}\sum\limits_{x\in\F_q}A(x)\overline{B}(1-x).
\end{align}

Many authors (see  \cite{greene}, \cite{GreeneStanton},  \cite{ISS}, \cite{Koike},  \cite{ono}, and \cite{Rouse}, to name a few) have made use of the mantra that Gaussian analogs of classical hypergeometric results arise when rational parameters
$1/n$ are replaced with a character $\chi$ of order $n$ (resp. $a/n$ with $\chi^a$). We  consider those
functions where the parameter characters have order 1 and 2, which always exist for $\F_q$ when $q=p^r$ is odd.
The simplest example of these functions are the ${_2F_1}$-Gaussian hypergeometric functions 
\begin{equation}
{_2F_1}(\lambda)_q:={_2F_1}\left(\begin{array}{cc}
\phi, & \phi\\
~& \varepsilon
\end{array}\mid \lambda\right)_q=\frac{q}{q-1}\sum\limits_{\chi}{\phi\chi\choose\chi}{\phi\chi\choose\chi}\chi(\lambda),
\end{equation}
 where $\phi(\cdot)$ is the quadratic character and $\varepsilon$ is the trivial character of $\F_q$.
As our first result, we compute the moments of these Gaussian hypergeometric functions.

\begin{theorem}\label{Theorem2F1}
If $r$ and $m$ are fixed positive integers, then as $p\rightarrow +\infty$ we have
$$
p^{r(m/2-1)}\sum\limits_{\lambda\in\F_{p^r}}{_2F_1}(\lambda)_{p^r}^m=
\begin{cases} o_{m,r}(1) \ \ \ \ \ &{\text {if $m$ is odd}}\\ 
\frac{(2n)!}{n!(n+1)!}+o_{m,r}(1) \ \ \ \
&{\text {if $m=2n$ is even.}}
\end{cases}
$$
\end{theorem}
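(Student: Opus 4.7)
The plan is to reduce to the vertical Sato--Tate distribution of Frobenius angles for the Legendre family. Let $E_\lambda:y^2=x(x-1)(x-\lambda)$ and put $a_{p^r}(\lambda):=p^r+1-\#E_\lambda(\F_{p^r})$. The classical Koike/Ono identity (see \cite{ono}) generalises verbatim to $\F_{p^r}$ via Jacobi sums and yields
\[
{_2F_1}(\lambda)_{p^r}\;=\;-\,\phi(-1)\,a_{p^r}(\lambda)/p^r,\qquad \lambda\in\F_{p^r}\setminus\{0,1\},
\]
the two excluded values contributing $o_{m,r}(1)$ in aggregate to the normalised moment. Writing $a_{p^r}(\lambda)=2p^{r/2}\cos\phi_\lambda$ with $\phi_\lambda\in[0,\pi]$ via Hasse, the target identity becomes
\[
p^{r(m/2-1)}\sum_{\lambda\in\F_{p^r}}{_2F_1}(\lambda)_{p^r}^m
=\frac{(-\phi(-1))^m}{p^r}\sum_{\lambda\in\F_{p^r}\setminus\{0,1\}}\!(2\cos\phi_\lambda)^m+o_{m,r}(1).
\]

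I would then invoke Deligne's equidistribution theorem for the Legendre family on $\PP^1\setminus\{0,1,\infty\}$: since the geometric monodromy of $R^1\pi_*\Q_\ell$ is the full group $\SL_2$, the angles $\phi_\lambda$ for $\lambda\in\F_{p^r}$ equidistribute, as $p\to\infty$ with $r$ fixed, with respect to the Sato--Tate measure $\tfrac{2}{\pi}\sin^2\phi\,d\phi$. Expanding $(2\cos\phi)^m=\sum_{n=0}^m c_{m,n}U_n(\cos\phi)$ in Chebyshev polynomials of the second kind, $\mathrm{SU}(2)$-orthogonality picks out only the $n=0$ term; the coefficient $c_{m,0}$ is the multiplicity of the trivial representation in the $m$-th tensor power of the standard representation, equal to the Catalan number $C_k=\binom{2k}{k}/(k+1)$ when $m=2k$ and $0$ when $m$ is odd. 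This recovers the stated moments.

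Operationally I would avoid a black box and compute term-by-term: the $n=0$ piece of the Chebyshev expansion of $\sum_\lambda a_{p^r}(\lambda)^m$ contributes $c_{m,0}(p^r-2)\,p^{rm/2}$, producing the main Catalan term after the $p^{-r(m/2+1)}$ scaling, while each $n\ge 1$ piece equals $p^{rn/2}\sum_\lambda\mathrm{tr}\bigl(\Frob_{p^r}\mid\mathrm{Sym}^n R^1\pi_*\Q_\ell\bigr)_\lambda$ and is $O_{n,r}(p^{r(n+1)/2})$ by Grothendieck--Lefschetz combined with Deligne's Riemann Hypothesis, since $\mathrm{Sym}^n$ ($n\ge 1$) has no $\SL_2$-invariants. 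Each such contribution is $O_{m,r}(p^{-r/2})$ after normalisation. The main obstacle is precisely this last estimate: one must know the Legendre family has full $\SL_2$ monodromy (classical; cf.\ Katz) and control the Euler characteristics of the symmetric-power sheaves on the thrice-punctured line so that implicit constants for each fixed $m$ and $r$ remain finite. This is essentially Birch's original $r=1$ argument, made uniform in $r$ by base change, with the degenerate fibers $\lambda\in\{0,1,\infty\}$ harmlessly absorbed into the error.
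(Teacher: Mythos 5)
Your argument is correct in outline and reaches the right answer, but it takes a genuinely different route from the paper. You reduce to Frobenius traces of the Legendre family exactly as the paper does (its Theorem~\ref{Trace2F1}), but from there you invoke the full $\SL_2$ geometric monodromy of $R^1\pi_*\Q_\ell$ on the thrice-punctured line, Grothendieck--Lefschetz, and Deligne's Weil~II bounds on $H^1_c$ of the symmetric-power sheaves; the Catalan number then appears as $\dim\bigl(V^{\otimes 2n}\bigr)^{SU(2)}$. The paper instead stays on the ``automorphic'' side: it sorts the $\lambda$ into $\F_q$-isomorphism classes of curves with full rational $2$-torsion, converts the moments into weighted sums of Hurwitz class numbers via Schoof's theorem (Proposition~\ref{Moments2F1}), and estimates those sums using Eichler's relation, Rankin--Cohen brackets of Zagier's $\mathcal{H}(\tau)$ with theta functions, holomorphic projection, Mertens' resolution of Cohen's conjecture, and Deligne's bound for cusp form coefficients. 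Your route is shorter and conceptually transparent for ${}_2F_1$, at the price of the heavier $\ell$-adic black boxes (monodromy computation, Weil~II, and the Grothendieck--Ogg--Shafarevich control of $\chi_c(\mathrm{Sym}^n\mathcal{F})$ uniformly in $p$, which you correctly flag as the one point needing care); it is essentially the Katz--Sarnak vertical Sato--Tate argument specialized to Legendre curves. The paper's route yields exact finite-level formulas for the moments, handles the twist/isomorphism-class bookkeeping that becomes essential for the ${}_3F_2$ case (where the same class-number machinery is reused for the Clausen curves), and is the refinement of Birch's original class-number-relation proof. One bookkeeping slip in your write-up: the $n\ge 1$ Chebyshev piece of $\sum_\lambda a_{p^r}(\lambda)^m$ carries the factor $c_{m,n}\,p^{r(m-n)/2}$ in front of $\sum_\lambda \mathrm{tr}\bigl(\Frob\mid\mathrm{Sym}^n\bigr)$, not $p^{rn/2}$; with the correct factor and the Weil~II bound $O_n\bigl(p^{r(n+1)/2}\bigr)$ you do land on the $O_{m,r}(p^{-r/2})$ you claim after normalisation, so this does not affect the conclusion.
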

\begin{remark}The non-zero moments in Theorem~\ref{Theorem2F1} (i.e. the Catalan numbers) arise \cite{Drew} as the moments of traces of  the Lie group $SU(2),$ the $2\times2$  determinant 1 unitary matrices.
Namely, for even moments, we have
$$
\int_{SU(2)} (\text{Tr} X)^{2n} dX=\frac{(2n)!}{n!(n+1)!},
$$
where the integral is with respect to the Haar measure on $SU(2)$.

\end{remark}

Using these moments, we determine the limiting behavior of the
$_2F_1(\lambda)_{p^r}$ as $p\rightarrow +\infty.$  We obtain the limiting distribution of the renormalized values
$\sqrt{p^r}\cdot\  _2F_1(\lambda)_{p^r}\in [-2, 2],$
which we view as random variables on $\F_{p^r}.$
Namely, we obtain the following result.

\begin{corollary}\label{Distribution2F1}
If $-2\leq a<b\leq 2$, and $r$ is a fixed positive integer, then 
$$
\lim_{p\to\infty}\frac{|\left\{\lambda\in\F_{p^r}: \sqrt{p^r}\cdot{_2F_1}(\lambda)_{p^r} \in [a,b]\right\}|}{p^r}=\frac{1}{2\pi}\int_{a}^b \sqrt{4-t^2} dt.
$$
\end{corollary}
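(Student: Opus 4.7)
The plan is to derive the corollary from Theorem~\ref{Theorem2F1} by the classical method of moments. For each prime $p$, consider the random variable $X_p$ on $\F_{p^r}$, equipped with uniform probability measure, defined by $X_p(\lambda):=\sqrt{p^r}\cdot {_2F_1}(\lambda)_{p^r}$. Its $m$-th moment is exactly
\begin{equation*}
\mathbb{E}[X_p^m] \;=\; \frac{1}{p^r}\sum_{\lambda \in \F_{p^r}}\bigl(\sqrt{p^r}\cdot {_2F_1}(\lambda)_{p^r}\bigr)^m \;=\; p^{r(m/2-1)}\sum_{\lambda \in \F_{p^r}}{_2F_1}(\lambda)_{p^r}^m.
\end{equation*}
Theorem~\ref{Theorem2F1} therefore yields $\mathbb{E}[X_p^m]\to 0$ for odd $m$ and $\mathbb{E}[X_p^{2n}]\to \tfrac{(2n)!}{n!(n+1)!}$ (the $n$-th Catalan number) as $p\to\infty$ with $r$ fixed.

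Next, I would identify these limits as the moments of the semicircle measure $d\mu(t)=\tfrac{1}{2\pi}\sqrt{4-t^2}\,dt$ on $[-2,2]$. The substitution $t=2\cos\theta$ reduces the even-moment integrals to standard Wallis-type integrals that recover exactly the Catalan numbers, while odd moments vanish by symmetry. Since $\mu$ has compact support, it is uniquely determined by its moments (polynomials being dense in $C([-2,2])$ by Stone--Weierstrass). The method of moments then delivers weak convergence of the laws of the $X_p$ to $\mu$ as $p\to\infty$.

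Because $\mu$ is absolutely continuous, its distribution function is continuous, and weak convergence upgrades to pointwise convergence of probabilities of intervals, which is exactly the statement of Corollary~\ref{Distribution2F1}. The main ``obstacle'' is therefore already packaged into Theorem~\ref{Theorem2F1}: once the moments are in hand, the passage to a distributional statement is essentially bookkeeping. As a reassuring technical remark, the classical identification of $p^r\cdot {_2F_1}(\lambda)_{p^r}$ with (a twist of) the trace of Frobenius on the Legendre elliptic curve $E_\lambda\colon y^2=x(x-1)(x-\lambda)$ over $\F_{p^r}$, combined with the Hasse--Weil bound $|a_{p^r}(E_\lambda)|\leq 2p^{r/2}$, shows that the $X_p$ already take values in $[-2,2]$ uniformly in $p$ (away from the two degenerate parameters $\lambda=0,1$, whose contribution to any moment is $O(p^{-r})$ and hence negligible). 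Consequently, no tightness argument or tail truncation is needed to legitimately invoke the method of moments.
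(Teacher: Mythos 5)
Your proposal is correct and follows essentially the same route as the paper: the authors also deduce the corollary from Theorem~\ref{Theorem2F1} by the method of moments, packaged as Lemma~\ref{critical-lemma}~(1), identifying the Catalan numbers as the even moments of the semicircle measure and using moment-determinacy of a compactly supported law to pass from moment convergence to convergence of interval probabilities. The only cosmetic difference is that the paper rescales to $[-1,1]$ before invoking its lemma, whereas you work directly on $[-2,2]$.
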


\begin{remark} 
Theorem~\ref{Theorem2F1} may be interpreted in terms of the Legendre normal form elliptic curves
$$
E_\lambda^\Leg:\ \ \ y^2=x(x-1)(x-\lambda).
$$
If $\lambda\in\F_q\setminus\{0,1\},$ then
 (see Theorem 11.10 of \cite{ono-book}) 
$q\cdot {_2F_1}(\lambda)_q=-\phi(-1)\cdot a_\lambda^{\Leg}(q),$
where 
\begin{equation}\label{TraceCharacterSum}
a_\lambda^{\Leg}(q):=q+1- |E_\lambda^\Leg(\F_q)|=-\sum_{x\in \F_q} \phi(x(x-1)(x-\lambda)).
\end{equation}
Corollary~\ref{Distribution2F1} refines (i.e. restriction to Legendre curves) a classical theorem of Birch \cite{birch} which established this distribution for all elliptic curves over finite fields. Birch's Theorem has recently been refined \cite{BKP} by Bringmann, Kane, and Pujahari  in another direction, where the Frobenius traces  are restrictied to arithmetic progressions.
These distributions are renormalizations of the usual Sato-Tate distribution which was famously proved by Clozel, Harris, Shepherd-Barron and Taylor in \cite{SatoTateProof}. In their (more difficult) setting, the elliptic curve is fixed and the distribution is taken over all primes $p.$ Recent work along these lines for further abelian varieties have been obtained by Fit\'e, Kedlaya, and Sutherland  (for example, see \cite{Drew}).
\end{remark}

We also consider these questions for the $_3F_2$ Gaussian hypergeometric functions
\begin{equation}
{_3F_2}(\lambda)_q:={_3F_2}\left(\begin{array}{ccc}
\phi, & \phi, &\phi \\
~& \varepsilon, &\varepsilon
\end{array}\mid \lambda\right)_q=\frac{q}{q-1}\sum\limits_{\chi}{\phi\chi\choose\chi}{\phi\chi\choose\chi}{\phi\chi\choose\chi}\chi(\lambda).
\end{equation}
The power moments for these functions satisfy the following asymptotics.

\begin{theorem}\label{Theorem3F2}
If $r$ and $m$ are fixed positive integers, then as $p\rightarrow +\infty$ we have
$$
p^{r(m-1)}\sum\limits_{\lambda\in\F_{p^r}} {_3F_2}(\lambda)_{p^r}^{m}=\begin{cases} o_{m,r}(1) \ \ \ \ \ &{\text { if $m$ is odd}}\\ 
\sum\limits_{i=0}^{m} (-1)^i{m\choose i}\frac{(2i)!}{i!(i+1)!}+o_{m,r}(1)  \ \ \ \ 
&{\text { if $m$ is even.}}
\end{cases}
$$
\end{theorem}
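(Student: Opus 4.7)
The plan is to reduce Theorem~\ref{Theorem3F2} to Theorem~\ref{Theorem2F1} via a finite-field analog of Clausen's identity, which links ${_3F_2}(\lambda)_{p^r}$ to ${_2F_1}(\lambda)_{p^r}^2$.

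The first step is to establish an arithmetic identity of the shape
\begin{equation*}
p^{2r}\cdot{_3F_2}(\lambda)_{p^r}=\phi(1-\lambda)\left(a_\lambda^{\Leg}(p^r)^2-p^r\right),\qquad\lambda\in\F_{p^r}\setminus\{0,1\}.
\end{equation*}
This is the arithmetic shadow of Clausen's classical formula ${_2F_1}(\tfrac14,\tfrac14;1;z)^2={_3F_2}(\tfrac12,\tfrac12,\tfrac12;1,1;z)$, and it may be derived either by direct manipulation of the Jacobi sums defining ${_3F_2}$ (in the spirit of Greene's transformations) or from a character-sum point count for the Kummer cover of the Legendre family. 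Setting $Y_\lambda:=\sqrt{p^r}\cdot{_2F_1}(\lambda)_{p^r}$, which lies in $[-2,2]$ by~\eqref{TraceCharacterSum} together with the Hasse bound, the identity reads $p^r\cdot{_3F_2}(\lambda)_{p^r}=\phi(1-\lambda)(Y_\lambda^2-1)$.

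Raising this to the $m$-th power and averaging over $\F_{p^r}$, with the $\lambda\in\{0,1\}$ contributions absorbed into an $o(1)$ error, we obtain
\begin{equation*}
p^{r(m-1)}\sum_{\lambda\in\F_{p^r}}{_3F_2}(\lambda)_{p^r}^m=\frac{1}{p^r}\sum_{\lambda\in\F_{p^r}}\phi(1-\lambda)^m(Y_\lambda^2-1)^m+o(1).
\end{equation*}
For even $m$, $\phi(1-\lambda)^m\equiv 1$ off a single $\lambda$, and the binomial expansion $(Y_\lambda^2-1)^m=\sum_{i=0}^m(-1)^i\binom{m}{i}Y_\lambda^{2(m-i)}$ reduces the problem term by term to the moments $\frac{1}{p^r}\sum_\lambda Y_\lambda^{2k}=p^{r(k-1)}\sum_\lambda{_2F_1}(\lambda)_{p^r}^{2k}\to\frac{(2k)!}{k!(k+1)!}$ furnished by Theorem~\ref{Theorem2F1}. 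A reindexing $i\mapsto m-i$, using $(-1)^{m-i}=(-1)^i$ (valid for even $m$), yields the claimed moment.

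The odd-$m$ case is the principal obstacle. Here $\phi(1-\lambda)^m=\phi(1-\lambda)$ is a nontrivial quadratic twist, and one must show that, for each fixed $k$,
\begin{equation*}
\frac{1}{p^{r(k+1)}}\sum_{\lambda\in\F_{p^r}}\phi(1-\lambda)\,a_\lambda^{\Leg}(p^r)^{2k}=o(1).
\end{equation*}
I would attack this by expanding $a_\lambda^{\Leg}(p^r)^{2k}$ as a $2k$-fold character sum over $\F_{p^r}^{2k}$ and swapping the $\lambda$-summation inside, reducing to Weil-type sums of the shape $\sum_\lambda\phi\bigl((1-\lambda)\prod_j(x_j-\lambda)\bigr)$. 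The crucial observation is that the ``paired'' tuples $(x_j)$ --- those producing the main Catalan contribution in the untwisted moment of Theorem~\ref{Theorem2F1} --- now give an inner sum that collapses to $\sum_\lambda\phi(1-\lambda)=0$, killing the leading-order term. The remaining contribution is controlled by Weil bounds together with character-sum cancellation in the outer $\vec{x}$ variables, or equivalently by Deligne's equidistribution theorem applied to the appropriate one-parameter family of Legendre-type curves twisted by $\phi(1-\lambda)$, yielding the required $o(1)$.
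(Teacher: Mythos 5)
Your reduction rests on the identity $p^{2r}\cdot{_3F_2}(\lambda)_{p^r}=\phi(1-\lambda)\bigl(a_\lambda^{\Leg}(p^r)^2-p^r\bigr)$, and this identity is false. The correct finite-field Clausen-type statement (Theorem~\ref{Trace3F2}, i.e.\ Theorem 5 of \cite{ono}) reads $q+q^2\phi(\lambda+1)\cdot{_3F_2}\!\left(\frac{\lambda}{\lambda+1}\right)_q=a^{\Cl}_\lambda(q)^2$, where $E^{\Cl}_\lambda:y^2=(x-1)(x^2+\lambda)$ is the \emph{Clausen} curve. Rewritten in your variable this gives $q^2\cdot{_3F_2}(\mu)_q=\phi(1-\mu)\bigl(a^{\Cl}_{\mu/(1-\mu)}(q)^2-q\bigr)$, and $a^{\Cl}_{\mu/(1-\mu)}(q)^2\neq a^{\Leg}_\mu(q)^2$ in general: the Clausen curve has full rational $2$-torsion only when $-\lambda$ is a square, so it is not even a quadratic twist of any Legendre curve for half the parameters (twisting preserves the $2$-torsion field). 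Concretely, for $q=13$ and $\mu=2$ one computes $a^{\Leg}_2(13)=6$ and $a^{\Cl}_{11}(13)=2$ with $\phi(-1)=\phi(12)=1$, so your identity predicts $169\cdot{_3F_2}(2)_{13}=23$ while the true value is $-9$. (The classical Clausen identity squares ${_2F_1}(\tfrac14,\tfrac14;1;z)$, not ${_2F_1}(\tfrac12,\tfrac12;1;z)$, which is why the quartic-character/Clausen family, not the Legendre family, appears.)

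This is not a repairable slip within your framework: once the Clausen curves replace the Legendre curves, Theorem~\ref{Theorem2F1} cannot be quoted, and the even-moment asymptotics $\sum_\lambda a^{\Cl}_\lambda(q)^{2n}\sim\frac{(2n)!}{n!(n+1)!}q^{n+1}$ must be established from scratch. The paper does this via Schoof's class-number counts for the Clausen family (Proposition~\ref{EvenMomentsClausen}, using that $\lambda\mapsto[E^{\Cl}_\lambda]$ is three-to-one or one-to-one according to $\phi(-\lambda)$) together with a new Hurwitz-class-number asymptotic $\sum_{s\ \mathrm{even}}H^\ast(4q-s^2)s^{2n}$ (Lemma~\ref{AsymptoticsEvenClausen}, via the Rankin--Cohen bracket of $\mathcal H$ with $\theta(4\tau)$). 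Your treatment of the odd moments is likewise not a proof: the twist there is $\phi(-\lambda)$ (not $\phi(1-\lambda)$), and the cancellation comes from the near-equality $-\tfrac12|I(s,q)|+2|I_2(s,q)|=o(\cdot)$ of two class-number counts, not from $\sum_\lambda\phi(1-\lambda)=0$; the Weil-bound route you sketch gives only $O(q^{2k+1/2})$ for a sum that must be shown to be $o(q^{k+1})$, so the ``principal obstacle'' you identify remains open in your argument.
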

\begin{remark} The moments in Theorem~\ref{Theorem3F2} arise \cite{PV} as moments of traces of the real orthogonal group $O_3$.
Namely, for even $m$ we have
$$
\int_{O_3} (\text{Tr} X)^m dX=\sum\limits_{i=0}^{m} (-1)^i{m\choose i}\frac{(2i)!}{i!(i+1)!},
$$
where the integral is with respect to the Haar measure on $O_3$.
\end{remark}

In analogy with Corollary~\ref{Distribution2F1}, we obtain the  limiting distribution of the renormalized values
$p^r\cdot {_3F_2(\lambda)_{p^r}} \in [-3, 3],$
that we view as random variables over $\F_{p^r}.$
We obtain the following result.

\begin{corollary}\label{Distribution3F2}
 If  $-3\leq a<b\leq 3,$ and $r$ is a fixed positive integer, then 
$$
\lim_{p\to\infty}\frac{|\left\{\lambda\in\F_{p^r}: p^r\cdot{_3F_2}(\lambda)_{p^r}\in [a,b] \right\}|}{p^r}=\frac{1}{4\pi}\int_a^b f(t)dt,
$$ 
where 
$$
f(t)=\begin{cases}
\frac{3-|t|}{\sqrt{3+2|t|-t^2}} \ \ \ \ &\ \ \ \ {\text {\it if}}\  1<|t|<3, \\ 
\frac{3+t}{\sqrt{3-2t-t^2}}+\frac{3-t}{\sqrt{3+2t-t^2}} &\ \ \ \  {\text{if}}\  |t|<1,\\ 
0 &\ \ \ \ \text{otherwise}.
\end{cases}
$$
\end{corollary}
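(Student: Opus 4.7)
The plan is to apply the method of moments. For each prime $p$, introduce the empirical probability measure $\mu_{p^r}$ on $\R$ that assigns mass $1/p^r$ to each of the points $p^r\cdot {_3F_2}(\lambda)_{p^r}$ as $\lambda$ ranges over $\F_{p^r}$. Then Theorem~\ref{Theorem3F2} is exactly the assertion that for every fixed positive integer $m$,
$$
\int_{\R} t^m\, d\mu_{p^r}(t)\ =\ p^{r(m-1)}\sum_{\lambda\in\F_{p^r}} {_3F_2}(\lambda)_{p^r}^m\ \longrightarrow\ M_m\quad\text{as } p\to\infty,
$$
where $M_m=0$ for odd $m$ and $M_m=\sum_{i=0}^m(-1)^i\binom{m}{i}\frac{(2i)!}{i!(i+1)!}$ for even $m$. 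By the remark following Theorem~\ref{Theorem3F2}, these $M_m$ coincide with the moments $\int_{O_3}(\mathrm{Tr}\,X)^m\,dX$ of the trace on the real orthogonal group.

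Let $\nu$ denote the pushforward of normalized Haar measure on $O_3$ under $X\mapsto\mathrm{Tr}\,X$. Since $O_3$ is compact and every element has eigenvalues on the unit circle, $\nu$ is supported on $[-3,3]$ and is in particular uniquely determined by its moments. The key computation—which can be carried out by hand via the Weyl integration formula on each of the two connected components of $O_3$, or extracted directly from \cite{PV}—shows that $\nu$ is absolutely continuous with density $f(t)/(4\pi)$, for $f$ as in the statement of the corollary. The inverse-square-root singularities of $f$ at $|t|=1$ are integrable, so the cumulative distribution function of $\nu$ is continuous on all of $\R$.

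By the Fr\'echet--Shohat theorem, convergence of moments to those of the compactly supported measure $\nu$ implies weak convergence $\mu_{p^r}\Rightarrow\nu$, and continuity of the CDF of $\nu$ upgrades this to pointwise convergence of CDFs at every real number. Consequently, for any $-3\le a<b\le 3$,
$$
\lim_{p\to\infty}\mu_{p^r}([a,b])\ =\ \nu([a,b])\ =\ \frac{1}{4\pi}\int_a^b f(t)\,dt,
$$
which is the claim. The main obstacle is the density computation identifying $\nu$ with $f(t)\,dt/(4\pi)$; once that explicit evaluation is in hand, the rest of the argument is a formal consequence of Theorem~\ref{Theorem3F2} and the method of moments, exactly paralleling the deduction of Corollary~\ref{Distribution2F1} from Theorem~\ref{Theorem2F1} (where the analogous density turns out to be the Wigner semicircle $\frac{1}{2\pi}\sqrt{4-t^2}$, the trace distribution on $SU(2)$).
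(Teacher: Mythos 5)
Your proposal is correct and follows the same high-level strategy as the paper: both arguments are an application of the method of moments to the empirical measures attached to $p^r\cdot{_3F_2}(\lambda)_{p^r}$, using Theorem~\ref{Theorem3F2} for convergence of moments, compact support of the limit to guarantee determinacy by moments, and continuity of the limiting CDF to pass from weak convergence to convergence of $\mu_{p^r}([a,b])$. The one substantive difference is how the moment sequence $M_m=\sum_{i=0}^m(-1)^i\binom{m}{i}\frac{(2i)!}{i!(i+1)!}$ is matched to the density $f(t)/(4\pi)$. The paper (in the proof of Lemma~\ref{critical-lemma}~(2)) verifies directly that $\int_{-3}^{3}f(t)t^m\,dt=4\pi M_m$ by reducing the integral to an Appell $F_1$ value via Bailey's formula and then evaluating with Gauss's ${_2F_1}$ summation; this is self-contained and never mentions $O_3$. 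You instead identify the limit as the pushforward of Haar measure on $O_3$ under the trace (citing \cite{PV} for the moments) and propose to compute its density via the Weyl integration formula on the two components of $O_3$. That route does work --- on $SO(3)$ the trace is $1+2\cos\theta$ with Weyl density $\tfrac{1}{\pi}(1-\cos\theta)\,d\theta$, on the other component it is $-1+2\cos\theta$ with density $\tfrac{1}{\pi}(1+\cos\theta)\,d\theta$, and averaging the two pushforwards recovers exactly $f(t)/(4\pi)$ --- and it is arguably more conceptual, explaining where the ``Batman'' shape comes from. But be aware that this density computation is the entire content of the step you label the ``main obstacle,'' and you have asserted rather than executed it; in a complete write-up you would need either to carry out the Weyl-measure change of variables explicitly or to perform the direct moment integral as the paper does.
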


\begin{example}  For the prime $p=93283$ (i.e. $r=1$), the histograms of
 the values $\sqrt{p}\cdot {_2F_1}(\lambda)_p$ and  $p \cdot {_3F_2}(\lambda)_p$
 illustrate
 Corollary~\ref{Distribution2F1}  (i.e. the near match with the radius 2 semicircle) and
 Corollary\ref{Distribution3F2} (i.e. the near match with the Batman distribution $f(t)$).

\begin{center}
\begin{table}[H]
 \ \ \ \ \ \ \  \ \ \ \ \ \ \includegraphics[height=35mm]{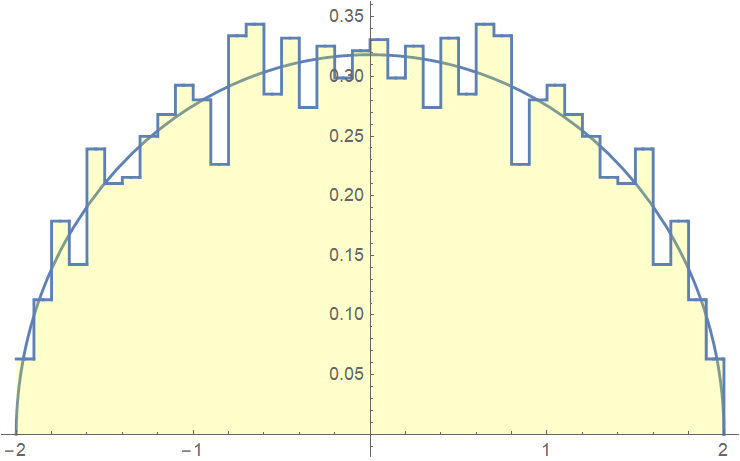}\ \   \ \ \ \ \ \ \ \  \ \ \ \ \ \ \ \ \includegraphics[height=45mm]{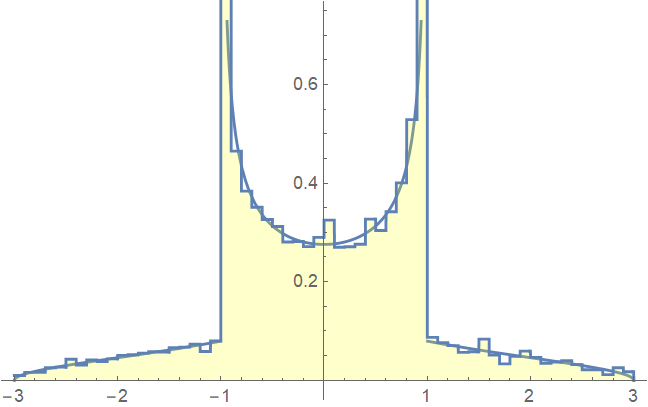}\\
\caption*{ \ \ \ $_2F_1$ histogram for $p=93283$ \ \ \ \ \ \ \ \ \ \ \ \ \ \ \ \ \ \ \ \ \ \ \  \ \ \ $_3F_2$ histogram for $p=93283$}
\end{table}
\end{center}
\end{example}

\begin{remark}
Theorem~\ref{Theorem3F2} can be interpreted in terms of the $K3$ surfaces whose function fields are 
$$
X_\lambda:\ \ \ s^2=xy(x+1)(y+1)(x+\lambda y),
$$
where $\lambda\in\F_q\setminus\{0,-1\}$. It is known (see Theorem 11.18 of \cite{ono-book} and Proposition 4.1 of \cite{ono-3}) that
 $$|X_\lambda(\F_q)|=1+q^2+19q+q^2\cdot {_3F_2}(-\lambda)_q.
 $$
  Corollary~\ref{Distribution3F2} gives the limiting Frobenius trace distribution for these $K3$ surfaces.
\end{remark}

\begin{remark} It is natural  to consider the asymptotics for the moments of the general
$$
{_{n}F_{n-1}}(\lambda)_{p^r}:={_{n}F_{n-1}}\left(\begin{array}{cccc}
\phi, & \phi,  &\dots, &\phi \\
~& \varepsilon,  &\dots, &\varepsilon
\end{array}\mid \lambda\right)_{p^r}=\frac{p^r}{p^r-1}\sum\limits_{\chi}{\phi\chi\choose\chi}{\phi\chi\choose\chi}\cdots{\phi\chi\choose\chi}\chi(\lambda)
$$
hypergeometric functions. 
It would be very interesting to determine asymptotics for the moments, which in turn would lead
 to  distributions that extend Corollaries~\ref{Distribution2F1} and \ref{Distribution3F2}.  A solution to this problem in the case of the $_4F_3$ functions is already quite interesting.
\end{remark}

The proofs of Theorems~\ref{Theorem2F1} and \ref{Theorem3F2} rely on the fact that the
$ _2F_1(\lambda)_q$ and $ _3F_2(\lambda)_q$ values arise from the arithmetic of the Legendre and Clausen elliptic curves
\begin{eqnarray}\label{FamiliesofCurves}
E^{\Leg}_{\lambda}:  \ \  y^2=x(x-1)(x-\lambda) \ \ \ {\text {\rm and}}\ \ \ 
E^{\Cl}_\lambda: \ \ y^2=(x-1)\left(x^2+\lambda\right).
\end{eqnarray}
As mentioned above, the $_2F_1(\lambda)_q$ are renormalizations of the Frobenius traces for $E^{\Leg}_{\lambda}/\F_q.$ 
The $_3F_2(\lambda)_q$ (see Theorem~\ref{Trace3F2})
are related to the squares of the Frobenius traces of $E^{\Cl}_{\lambda}/\F_q.$  
Using these arithmetic geometric connections, we  reformulate the moments in terms of the moduli space of these elliptic curves. 
We interpret these reformulations in terms of isomorphism classes of elliptic curves with certain subgroups of $\F_q$ rational points. The moments
can then be given as weighted sums of Hurwitz class numbers which enumerate such isomorphism classes.

To estimate these moments, we make use of the theory of harmonic Maass forms.
More precisely,  these weighted sums arise in the Fourier expansions of nonholomorphic modular forms produced from the Rankin-Cohen brackets of Zagier's weight 3/2 nonholomorphic Eisenstein series when paired with explicit theta functions. The proofs of Theorems~\ref{Theorem2F1} and \ref{Theorem3F2}  are then reduced to an application of Deligne's Theorem, which bounds the coefficients of  the cuspidal components of the holomorphic projections of these nonholomorphic modular forms. The recent proof of Cohen's Conjecture by Mertens \cite{mertens, mertensRMS} plays a significant role in the $_2F_1$ case.

This paper is organized as follows.
In Section~\ref{Section2F1} we recall the fundamental facts we require about the $_2F_1(\lambda)_q$ functions and the arithmetic of the Legendre  curves $E_{\lambda}^{\Leg}$. In Section~\ref{Section3F2} we recall the analogous results for $_3F_2(\lambda)_q$ and the Clausen curves $E_{\lambda}^{\Cl}$. In Section~\ref{HarmonicMaassForms}, we recall facts from the theory of harmonic Maass forms which enable us to analyze  these elliptic curves in terms of weighted class number sums thanks to a theorem of Schoof. We apply these facts in
Section~\ref{WeightedSumsOfClass} to obtain the asymptotic properties of these class number sums.
 In Section~\ref{ProbTheory} we recall the criteria for deducing the distributions in Corollaries~\ref{Distribution2F1} and \ref{Distribution3F2} in terms of moments. Finally, in Section~\ref{Proofs} we conclude with the proofs of 
Theorems~\ref{Theorem2F1} and \ref{Theorem3F2}. 
\section*{Acknowledgements} \noindent
The authors thank Quanlin Chen, Eric Shen, and Drew Sutherland for comments on earlier drafts of this article. The first
  author thanks  the Thomas Jefferson Fund and the NSF
(DMS-2002265 and DMS-2055118) for their generous support, as well as  the Kavli Institute grant NSF PHY-1748958.
The third author is grateful for the support of a Fulbright Nehru Postdoctoral Fellowship.

\section{The $_2F_1(\lambda)_q$ and the arithmetic of $E^{\Leg}_\lambda$}\label{Section2F1}
Here we recall important facts about the $_2F_1(\lambda)_q$ values. The results we require are obtained by
interpreting these values in terms of the trace of Frobenius on the Legendre normal form elliptic curves
$E_{\lambda}^{\Leg}.$ This connection is well known and has been observed by several authors.

\begin{theorem}[Th. 11.10 of \cite{ono-book}]\label{Trace2F1} 
If $\lambda\in\F_q\setminus\{0,1\}$ and $\ch(\F_q)\geq 5,$  then
$$
q\cdot{_2F_1}(\lambda)_q=-\phi(-1)a_\lambda^{\Leg}(q).
$$
\end{theorem}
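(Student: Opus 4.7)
The plan is to expand $q \cdot {_2F_1}(\lambda)_q$ into a double character sum via the Jacobi-sum definition of $\binom{A}{B}$, collapse it by character orthogonality, and identify the resulting one-variable character sum with $a_\lambda^\Leg(q)$ through a Möbius substitution and a quadratic twist.

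First I would substitute $\binom{\phi\chi}{\chi} = \chi(-1) J(\phi\chi, \bar\chi)/q$ and use $\chi(-1)^2 = 1$ to obtain
\[
q \cdot {_2F_1}(\lambda)_q \;=\; \frac{1}{q-1} \sum_\chi J(\phi\chi, \bar\chi)^2 \chi(\lambda) \;=\; \frac{1}{q-1} \sum_{s,t \in \F_q} \phi(st) \sum_\chi \chi\!\left(\frac{st\lambda}{(1-s)(1-t)}\right),
\]
after expanding the square of each Jacobi sum and swapping summations. Orthogonality of characters kills the inner $\chi$-sum unless its argument equals $1$ (the sum is automatically zero when $s$ or $t$ lies in $\{0,1\}$ since then $\chi(0)=0$ for every $\chi$). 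Solving $st\lambda = (1-s)(1-t)$ for $t$ gives the unique value $t=(1-s)/(1+s(\lambda-1))$, so using $\phi^2=\varepsilon$ on $\F_q^\times$ collapses the double sum to
\[
q \cdot {_2F_1}(\lambda)_q \;=\; \sum_{s \in \F_q} \phi\bigl(s(1-s)(1+s(\lambda-1))\bigr).
\]

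Next I would apply the substitution $s\mapsto 1/s$. Since $\phi(s^{-3})=\phi(s)$, a direct computation transforms the right-hand side into $\sum_{s\in\F_q}\phi\bigl(s(s-1)(s-(1-\lambda))\bigr) = -a^\Leg_{1-\lambda}(q)$, by the definition of the Legendre character sum in (\ref{TraceCharacterSum}). Finally, the substitution $x\mapsto 1-x$ in the defining equation of $E^\Leg_{1-\lambda}$ identifies it with the quadratic twist of $E^\Leg_\lambda$ by $-1$, so $a^\Leg_{1-\lambda}(q) = \phi(-1) a^\Leg_\lambda(q)$. Chaining the equalities yields $q\cdot{_2F_1}(\lambda)_q = -\phi(-1)\,a^\Leg_\lambda(q)$, which is the claim.

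The only real difficulty lies in the bookkeeping of signs and of the character values of the scalar factors produced by the two substitutions; each step is itself routine once one commits to the rules $\phi^2=\varepsilon$ on $\F_q^\times$ and $a^{E^d}(q) = \phi(d)\,a^E(q)$ for quadratic twists. A slightly more symmetric alternative would collapse the two substitutions into a single Möbius transformation carrying $\{0,1,1/(1-\lambda)\}$ to $\{0,1,\lambda\}$ directly, but the resulting sign calculation is essentially the same as the twist computation above.
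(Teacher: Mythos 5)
Your argument is correct. Note first that the paper itself gives no proof of this statement: it is quoted verbatim as Theorem 11.10 of \cite{ono-book}, so there is nothing internal to compare against. Your derivation is essentially the standard one. Expanding the squared normalized Jacobi sum and applying orthogonality of characters is exactly how one obtains Greene's ``integral representation'' of ${_2F_1}$ as a single character sum (Greene's Definition 3.5/Theorem 3.6; in the usual normalization this reads $q\cdot{_2F_1}(\lambda)_q=\phi(-1)\sum_x\phi\bigl(x(1-x)(1-\lambda x)\bigr)$, which your $\sum_s\phi\bigl(s(1-s)(1+s(\lambda-1))\bigr)$ matches after a M\"obius change of variable), and the remaining step is the substitution identifying the cubic with the Legendre model. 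The details all check: the degenerate value $s=1/(1-\lambda)$, for which no $t$ solves $st\lambda=(1-s)(1-t)$, is correctly absorbed because the corresponding factor vanishes and $\phi(0)=0$; the sign bookkeeping $\phi(s^{-3})=\phi(s)$ is right; and your final twist step $a^{\Leg}_{1-\lambda}(q)=\phi(-1)a^{\Leg}_{\lambda}(q)$ is precisely Proposition~\ref{elliptic-proposition-2}(2) combined with \eqref{TwistTrace}. The only cosmetic difference from the textbook route is that you pass through $E^{\Leg}_{1-\lambda}$ and its $-1$ twist rather than absorbing the factor $\phi(-1)$ directly in the substitution $x\mapsto 1-\lambda x$ versus $x\mapsto x/\lambda$; the content is the same.
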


\begin{remark}  Theorem~\ref{Trace2F1} is analogous to Gauss' classical hypergeometric formula for the real period $\Omega^{\Leg}(\lambda)$ of $E_{\lambda}^{\Leg}$   (for example, see Chapter 9 of  \cite{Husemoller}), where
for $0<\lambda<1$ we have
$$
\pi \cdot{ _2F_1}\left( \begin{matrix} \frac{1}{2} & \frac{1}{2} 
\\ & 1 \end{matrix} \ | \ \lambda\right)= \Omega^{\Leg}(\lambda).
$$
\end{remark}

\subsection{Facts about Legendre normal forms}

As mentioned above, the proof of Theorem~\ref{Theorem2F1} relies on an arithmetic reformulation of the moments of
$_2F_1(\lambda)_q.$  By Theorem~\ref{Trace2F1}, this task requires important facts about the $E_{\lambda}^{\Leg}.$
We now recall these facts.

\begin{proposition}[Proposition 1.7, Chapter III of \cite{silverman}]\label{elliptic-proposition-1}
Let $K$ be a field with $\ch(K)\neq2,3$.

\noindent
(1) Every elliptic curve $E\slash K$ is isomorphic over $\overline{K}$ to an elliptic curve $E^{\Leg}_\lambda.$

\noindent
(2)  If $\lambda\neq0,1$, then the $j$-invariant of $E^{\Leg}_\lambda$ is
$$j(E^{\Leg}_\lambda)=2^8\cdot\frac{(\lambda^2-\lambda+1)^3}{\lambda^2(\lambda-1)^2}.$$

\noindent
(3) The only $\lambda$ for which $j(E^{\Leg}_\lambda)=1728$ are $\lambda=2,-1,$ and $1/2$.

\noindent 
(4) The only $\lambda$ for which $j(E^{\Leg}_\lambda)=0$ are 
$\lambda=\frac{1\pm\sqrt{-3}}{2}$

\noindent
(5)  For every $j\not \in \{0, 1728\},$ the map $K\setminus\{0,1\}\to j(E^{\Leg}_\lambda)$ is six to one. In particular, we have
$$
\left\{\lambda, \frac{1}{\lambda}, 1-\lambda, \frac{1}{1-\lambda}, \frac{\lambda}{\lambda-1},
\frac{\lambda-1}{\lambda}\right\}\to j(E^{\Leg}_\lambda).
$$
\end{proposition}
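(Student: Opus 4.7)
The plan is to work directly from a short Weierstrass model, exploiting $\ch(K)\neq 2,3$ so that every elliptic curve admits a model $y^2=f(x)$ with $f$ a cubic of distinct roots, and so that square and cube roots can be extracted freely over $\overline{K}$. For part~(1), I would pass to $\overline{K}$ so that $f$ splits as $(x-e_1)(x-e_2)(x-e_3)$ and apply the affine substitution $x=(e_2-e_1)x'+e_1$, $y=(e_2-e_1)^{3/2}y'$. This sends $e_1\mapsto 0$, $e_2\mapsto 1$, and $e_3\mapsto \lambda:=(e_3-e_1)/(e_2-e_1)\in\overline{K}\setminus\{0,1\}$, producing a Legendre model for $E$. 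For part~(2), I would plug $a_2=-(1+\lambda)$, $a_4=\lambda$, $a_6=0$ into the standard Weierstrass formulas, arriving at $c_4=16(\lambda^2-\lambda+1)$ and $\Delta=16\lambda^2(\lambda-1)^2$, and hence the stated value of $j=c_4^3/\Delta$.

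Parts (3) and (4) then reduce to solving rational equations in $\lambda$ coming from (2). The equation $j=0$ immediately forces $\lambda^2-\lambda+1=0$, giving $\lambda=(1\pm\sqrt{-3})/2$. For $j=1728$, one reduces to $4(\lambda^2-\lambda+1)^3=27\lambda^2(\lambda-1)^2$ and verifies that the resulting degree-six polynomial factors as $(\lambda-2)^2(\lambda+1)^2(2\lambda-1)^2$, yielding exactly the three listed roots. This last factorization is the one nontrivial piece of computation in the whole proposition.

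For part~(5), the symmetric group $S_3$ permutes ordered triples $(e_1,e_2,e_3)$ of roots, and each ordering yields one Legendre parameter via $\lambda=(e_3-e_1)/(e_2-e_1)$. Tracking the six permutations produces the six values $\lambda,1/\lambda,1-\lambda,1/(1-\lambda),\lambda/(\lambda-1),(\lambda-1)/\lambda$; since the $j$-invariant is an isomorphism invariant, all six fall in the same fiber. To see the fiber is exactly six elements for $j\notin\{0,1728\}$, observe that any coincidence among these six rational functions of $\lambda$ (e.g.\ $\lambda=1-\lambda$ or $\lambda=1/\lambda$) forces $\lambda$ into a finite list, all of whose members lie above $j=0$ or $j=1728$ by (3) and (4); outside these two special $j$-values all six Legendre parameters are therefore distinct. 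The main obstacle, insofar as there is one, is the degree-six factorization in part~(3); the remaining steps are affine changes of variable and bookkeeping under the standing characteristic hypothesis.
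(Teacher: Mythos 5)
The paper gives no proof of this proposition at all---it is quoted directly from Silverman (Chapter III, Proposition 1.7 of \cite{silverman})---and your argument is exactly the standard one from that source, so there is nothing to compare beyond correctness. Your computations check: for $y^2=x(x-1)(x-\lambda)$ one gets $c_4=16(\lambda^2-\lambda+1)$ and $\Delta=16\lambda^2(\lambda-1)^2$, and the degree-six factorization in (3) is right (substituting $u=\lambda^2-\lambda$ turns $4(u+1)^3-27u^2$ into $(u-2)^2(4u+1)=(\lambda-2)^2(\lambda+1)^2(2\lambda-1)^2$). The only loose end is in (5): proving the six parameters are pairwise distinct away from $j\in\{0,1728\}$ shows the fiber has \emph{at least} six elements, but not exactly six. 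You should close this with the one-line observation that $2^8(\lambda^2-\lambda+1)^3=j_0\,\lambda^2(\lambda-1)^2$ is a polynomial equation of degree six in $\lambda$ with leading coefficient $2^8\neq 0$, hence has at most six roots (equivalently, any $\lambda'$ in the fiber gives a curve $\overline{K}$-isomorphic to $E^{\Leg}_\lambda$, and the isomorphism permutes the three $2$-torsion abscissae, forcing $\lambda'$ into the listed orbit). With that sentence added, the proof is complete.
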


Since elliptic curves  defined over $\F_q$ with the same $j$-invariant are not necessarily isomorphic over $\F_q,$ we must consider the theory of twists. We only require the standard notion of a quadratic twist.
If $d\in\F_q\setminus\{0,1\},$ and $E$ is given by 
$$
E:\ \ y^2=x^3+a_2x^2+a_4x+a_6,
$$ 
then its quadratic twist $E_d$ is given by\footnote{We note that this choice is equivalent to the usual convention where one has $E_d: \ \ dy^2=x^3+a_2x^2+a_4x+a_6.$}
$$
E_d: \ \ y^2=dx^3+da_2x^2+da_4x+da_6.
$$
If $d$ is a square in $\F_q,$ then $E_d$ is isomorphic to $E$ over $\F_q.$ Moreover, if $p$ is a prime of good reduction for $E_d$ (and hence also $E$), we have that
\begin{equation}\label{TwistTrace}
q+1- |E(\F_q)| =\phi(d)\left(q+1-|E_d(\F_q)|\right).
\end{equation}
The next result characterizes the quadratic twists of Legendre curves with common $j$-invariant.

\begin{proposition}[Prop. 3.2 of \cite{ahlgren-ono}]\label{elliptic-proposition-2}
For $\lambda\in\F_q\setminus\{0,1\},$ the following holds.

\noindent
(1)  $E^\Leg_\lambda$ is the $\lambda$ quadratic twist of $E^\Leg_{1/\lambda}$.

\noindent
(2) $E^\Leg_{\lambda}$ is the $-1$ quadratic twist of $E^\Leg_{1-\lambda}$.

\noindent
(3)  $E^\Leg_{\lambda}$ is the $1-\lambda$ quadratic twist of $E^\Leg_{\lambda/(\lambda-1)}$.

\noindent
(4) $E^\Leg_\lambda$ is the $-\lambda$ quadratic twist of $E^\Leg_{(\lambda-1)/\lambda}.$

\noindent
(5)  $E^\Leg_\lambda$ is the $\lambda-1$ quadratic twist of $E^\Leg_{1/(1-\lambda)}.$
\end{proposition}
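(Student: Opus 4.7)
The strategy is to exploit the fact that the six rational functions $\lambda,\ 1/\lambda,\ 1-\lambda,\ 1/(1-\lambda),\ \lambda/(\lambda-1),\ (\lambda-1)/\lambda$ of Proposition~\ref{elliptic-proposition-1}(5) form the orbit of $\lambda$ under the $S_3$-action generated by the involutions $\sigma_1\colon \lambda \mapsto 1/\lambda$ and $\sigma_2\colon \lambda \mapsto 1-\lambda$. The plan is therefore to prove (1) and (2) directly by exhibiting explicit $\F_q$-rational isomorphisms, and then to derive (3)--(5) by composing $\sigma_1$ and $\sigma_2$ while tracking how the twist factors multiply.

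For (1), in the paper's twist convention the curve $(E^\Leg_{1/\lambda})_\lambda$ is defined by
$$y^2 = \lambda\cdot x(x-1)(x - 1/\lambda) = x(x-1)(\lambda x - 1).$$
The $\F_q$-rational substitution $x = X/\lambda,\ y = Y/\lambda$ (valid since $\lambda \in \F_q^\times$) transforms the right-hand side into $X(X-1)(X-\lambda)/\lambda^2$, yielding $Y^2 = X(X-1)(X-\lambda) = E^\Leg_\lambda$. For (2), the curve $(E^\Leg_{1-\lambda})_{-1}$ is $y^2 = -x(x-1)(x-(1-\lambda))$; the substitution $x \mapsto 1-X$ with $y$ unchanged sends the right-hand side to $-(1-X)(-X)(\lambda-X) = X(X-1)(X-\lambda)$, which is $E^\Leg_\lambda$.

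For (3), (4), and (5), write $d_\sigma(\lambda) \in \F_q^\times$ for the element such that $E^\Leg_\lambda \cong (E^\Leg_{\sigma(\lambda)})_{d_\sigma(\lambda)}$ over $\F_q$. Since $((E)_{d_1})_{d_2} \cong (E)_{d_1 d_2}$ in the paper's convention, one obtains the cocycle identity
$$d_{\sigma\tau}(\lambda) = d_\sigma(\tau(\lambda))\cdot d_\tau(\lambda).$$
Parts (1) and (2) give $d_{\sigma_1}(\lambda) = \lambda$ and $d_{\sigma_2}(\lambda) = -1$, so that
\begin{align*}
d_{\sigma_1\sigma_2}(\lambda) &= (1-\lambda)\cdot(-1) = \lambda - 1, \\
d_{\sigma_2\sigma_1}(\lambda) &= (-1)\cdot\lambda = -\lambda, \\
d_{\sigma_1\sigma_2\sigma_1}(\lambda) &= \bigl(\tfrac{1}{\lambda} - 1\bigr)\cdot\lambda = 1-\lambda.
\end{align*}
Combined with the identities $(\sigma_1\sigma_2)(\lambda) = 1/(1-\lambda)$, $(\sigma_2\sigma_1)(\lambda) = (\lambda-1)/\lambda$, and $(\sigma_1\sigma_2\sigma_1)(\lambda) = \lambda/(\lambda-1)$, these are exactly statements (5), (4), and (3).

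The main obstacle is purely bookkeeping: being consistent with the paper's twist convention $(E)_d\colon y^2 = d\cdot f(x)$ when applying the cocycle identity, and composing the M\"obius transformations in the correct order. Once (1) and (2) are established by the straightforward polynomial manipulations above, no further geometric input is needed for the remaining cases.
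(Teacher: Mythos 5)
Your proposal is correct. Note that the paper gives no argument of its own here: the proposition is quoted as Proposition 3.2 of \cite{ahlgren-ono}, so there is no internal proof to compare against, and the natural direct proof (as in the cited source) would verify each of the five cases by an explicit change of variables. Your route is more economical: you carry out only two explicit substitutions, for the generators $\sigma_1:\lambda\mapsto 1/\lambda$ and $\sigma_2:\lambda\mapsto 1-\lambda$ of the anharmonic $S_3$-action underlying Proposition~\ref{elliptic-proposition-1}(5), and then propagate the twist factors through the cocycle identity $d_{\sigma\tau}(\lambda)=d_\sigma(\tau(\lambda))\,d_\tau(\lambda)$. Both base computations check out against the paper's convention $(E)_d\colon y^2=d\cdot f(x)$; the maps $(x,y)\mapsto(\lambda x,\lambda y)$ and $(x,y)\mapsto(1-x,y)$ are $\F_q$-isomorphisms fixing the point at infinity, and the three products $\lambda-1$, $-\lambda$, $1-\lambda$ land on the correct entries of (5), (4), (3) respectively. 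The one step you use silently is that twisting respects $\F_q$-isomorphism, i.e.\ $E\cong_{\F_q}E'$ implies $E_d\cong_{\F_q}E'_d$, which is needed to push the isomorphism $E^\Leg_{\tau(\lambda)}\cong\bigl(E^\Leg_{\sigma\tau(\lambda)}\bigr)_{d_\sigma(\tau(\lambda))}$ through a further twist by $d_\tau(\lambda)$; this is standard and harmless, but deserves a sentence. The payoff of your approach is that it reduces five verifications to two and makes the group structure behind the orbit $\{\lambda,1/\lambda,1-\lambda,1/(1-\lambda),\lambda/(\lambda-1),(\lambda-1)/\lambda\}$ explicit.
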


By Theorem~\ref{Trace2F1}, we can reformulate the moments of the $_2F_1$  functions as sums over
Legendre normal form elliptic curves. As we shall see in the next subsection, this requires dividing these curves into isomorphism classes over $\F_q$.
To this end, for $\lambda\in\F_q\setminus\{0,1\},$ we define
\begin{equation}\label{Lset}
L(\lambda):=\{\beta\in\F_q\setminus\{0,1\} \ : \ E_\beta^\Leg \cong_{\F_q}E_\lambda^\Leg\}.
\end{equation}
The following three lemmas determine $|L(\lambda)|.$ The first concerns $j\not \in \{0, 1728\}.$

\begin{lemma}\label{counting-LNF-not-supersingular}
If $j(E_\lambda)\not \in \{ 0,1728\},$ then
$$
|L(\lambda)|=\begin{cases}
3 &\ \hbox{if $q\equiv 3\pmod 4$} \\
6 &\ \hbox{if  $q\equiv 1\pmod 4, \lambda$ and $1-\lambda$ are both squares in $\F_q$} \\ 
4 &\ \hbox{if $q\equiv 1\pmod 4,$ either $\lambda$ or $1-\lambda$ is a square in $\F_q$} \\
2 &\ \hbox{if $q\equiv 1\pmod 4,$ neither $\lambda$ nor $1-\lambda$ is a square in $\F_q$}.
\end{cases}
$$
\end{lemma}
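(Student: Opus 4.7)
The plan is to reduce the statement to a counting exercise on squares in $\F_q^\times$. First, since $j(E_\lambda^{\Leg}) \notin \{0, 1728\}$, Proposition~\ref{elliptic-proposition-1}(5) tells us that the six parameters
$$
\lambda,\ \tfrac{1}{\lambda},\ 1-\lambda,\ \tfrac{1}{1-\lambda},\ \tfrac{\lambda}{\lambda-1},\ \tfrac{\lambda-1}{\lambda}
$$
are pairwise distinct elements of $\F_q\setminus\{0,1\}$ and constitute the full preimage of $j(E_\lambda^{\Leg})$ under $\beta\mapsto j(E_\beta^{\Leg})$. Hence $L(\lambda)$ is contained in this six-element orbit, and the task is to decide which orbit members yield a curve that is isomorphic to $E_\lambda^{\Leg}$ \emph{over} $\F_q$, not merely over $\overline{\F_q}$.

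Next, for each of the six orbit members $\beta$, Proposition~\ref{elliptic-proposition-2} expresses $E_\beta^{\Leg}$ as a specific quadratic twist of $E_\lambda^{\Leg}$ by some element $d_\beta\in\F_q^\times$. Assembling parts (1)--(5) and including the trivial identification $\beta=\lambda$ with $d_\lambda=1$, the set of twist elements that arises (as a multiset indexed by $\beta$) is
$$
\{1,\ \lambda,\ -1,\ 1-\lambda,\ -\lambda,\ \lambda-1\}.
$$
Since a quadratic twist $E_d$ is $\F_q$-isomorphic to $E$ exactly when $d \in (\F_q^\times)^2$, we obtain
$$
|L(\lambda)| \;=\; \#\bigl\{d\in\{1,\lambda,-1,1-\lambda,-\lambda,\lambda-1\}\;:\; d\in(\F_q^\times)^2\bigr\}.
$$

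The proof concludes with a short case analysis driven by the value of $q \bmod 4$. The element $1$ is always a square. If $q\equiv 3\pmod 4$ then $-1$ is a non-square, so for any $x\in\F_q^\times$ exactly one of $x,-x$ is a square; applied to $x=\lambda$ and to $x=1-\lambda$, this pairs the remaining five elements into three square contributions, giving $|L(\lambda)|=3$. If $q\equiv 1\pmod 4$, then $-1$ is a square, so $\lambda\in(\F_q^\times)^2 \Leftrightarrow -\lambda\in(\F_q^\times)^2$, and similarly $1-\lambda\in(\F_q^\times)^2 \Leftrightarrow \lambda-1\in(\F_q^\times)^2$. Setting $\epsilon_1=\mathbf{1}[\lambda\in(\F_q^\times)^2]$ and $\epsilon_2=\mathbf{1}[1-\lambda\in(\F_q^\times)^2]$, the count becomes $2+2\epsilon_1+2\epsilon_2\in\{2,4,6\}$, exactly matching the three $q\equiv 1\pmod 4$ subcases of the lemma.

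There is no substantive obstacle: the only care needed is to read off the six twist elements from Proposition~\ref{elliptic-proposition-2} without sign or inversion errors, and then to keep the binary case analysis on $\lambda, 1-\lambda$ organized.
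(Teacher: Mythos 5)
Your proof is correct and follows essentially the same route as the paper's: the paper also reduces the count to deciding which of the six orbit representatives are square twists of $E_\lambda^{\Leg}$ via Proposition~\ref{elliptic-proposition-2}, noting for $q\equiv 3\pmod 4$ that exactly two of $\{\lambda,1-\lambda,-\lambda,\lambda-1\}$ are squares (plus the trivial twist by $1$), and handling the $q\equiv 1\pmod 4$ cases the same way. Your write-up simply makes explicit the multiset of twist elements and the pairing argument that the paper leaves as ``mutatis mutandis.''
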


\begin{proof} Here we consider the case where
 $q\equiv 3\pmod 4.$ There are exactly two elements of 
 $\{\lambda,1-\lambda,-\lambda,\lambda-1\}$
 that are squares.  
 Therefore, Proposition~\ref{elliptic-proposition-2} applies that $|L(\lambda)|=3.$
The other cases are handled {\it mutatis mutandis.}
\end{proof}

For $j=1728$, we have the following lemma.
\begin{lemma}\label{counting-LNF-supersingular-1} Suppose that  $E_{\lambda}^{\Leg}/\F_q$ has $j(E_{\lambda}^{\Leg})=1728.$

\noindent
(1) If $q\equiv 3\pmod 4,$ then $a_\lambda^\Leg(q)=0.$

\noindent
(2) If  $q\equiv 1\pmod 8,$ then $L(2)=\left\{-1,2,1/2\right\}.$

\noindent
(3)  If $q\equiv 5\pmod 8$, then $L(2)=\left\{-1,2\right\}$ and $L(1/2)=\left\{1/2\right\}.$
\end{lemma}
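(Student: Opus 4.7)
The plan is to exploit the fact, from Proposition~\ref{elliptic-proposition-1}(3), that the only $\lambda$ with $j(E_\lambda^{\Leg})=1728$ are $\lambda\in\{-1,2,1/2\}$, and that by Proposition~\ref{elliptic-proposition-1}(5) these three values are closed under the six Legendre symmetries. Consequently $L(\lambda)\subseteq\{-1,2,1/2\}$ in each case, and determining $L$ reduces to deciding which of the three pairs $(2,1/2)$, $(2,-1)$, $(1/2,-1)$ correspond to $\F_q$-isomorphic curves.

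For part (1), I would work with the representative $\lambda=-1$, whose Legendre model $y^2=x^3-x$ is invariant under $x\mapsto -x$. From~\eqref{TraceCharacterSum}, $a^{\Leg}_{-1}(q)=-\sum_{x\in\F_q}\phi(x^3-x)$; substituting $x\mapsto -x$ shows this sum equals $\phi(-1)$ times itself, so $q\equiv 3\pmod 4$ (i.e.\ $\phi(-1)=-1$) forces it to vanish. Vanishing then propagates to $\lambda=2$ and $\lambda=1/2$ since Proposition~\ref{elliptic-proposition-2} exhibits each of these as a quadratic twist of $E_{-1}^{\Leg}$, and \eqref{TwistTrace} shows quadratic twists preserve vanishing of Frobenius traces.

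For parts (2) and (3), I would pin down each pairwise $\F_q$-isomorphism using parts (1)--(3) of Proposition~\ref{elliptic-proposition-2}: these give, respectively, $E_2^{\Leg}\cong_{\F_q}E_{1/2}^{\Leg}$ iff $2$ is a square in $\F_q$; $E_2^{\Leg}\cong_{\F_q}E_{-1}^{\Leg}$ iff $-1$ is a square; and (applying part~(3) at $\lambda=1/2$) $E_{1/2}^{\Leg}\cong_{\F_q}E_{-1}^{\Leg}$ iff $2$ is a square. Under the ambient hypothesis $q\equiv 1\pmod 4$, $-1$ is automatically a square, so the middle isomorphism always holds in both cases (2) and (3); the remaining dichotomy then reduces to whether $2$ is a square in $\F_q$, which is equivalent to $q\equiv\pm 1\pmod 8$. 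Combined with $q\equiv 1\pmod 4$ this selects exactly $q\equiv 1\pmod 8$: when this holds, all three curves coincide, yielding $L(2)=\{-1,2,1/2\}$; otherwise ($q\equiv 5\pmod 8$) only $E_2^{\Leg}\cong_{\F_q}E_{-1}^{\Leg}$ survives, yielding $L(2)=\{-1,2\}$ and $L(1/2)=\{1/2\}$.

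The only substantive step beyond symbolic manipulation is verifying that $2\in(\F_q^\times)^2$ iff $q\equiv\pm 1\pmod 8$; this is automatic for $r$ even (every element of $\F_p^\times$ becomes a square in $\F_{p^r}$ and simultaneously $p^r\equiv 1\pmod 8$), while for $r$ odd, squareness in $\F_q$ descends to squareness in $\F_p$ and one invokes the second supplement to quadratic reciprocity. This is the only genuine calculation in the argument.
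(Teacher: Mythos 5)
Your proof is correct. For parts (2) and (3) you are doing exactly what the paper does: reduce $L(\lambda)$ to a subset of $\{-1,2,1/2\}$ via the $j$-invariant and then decide the pairwise $\F_q$-isomorphisms by checking which twist parameters from Proposition~\ref{elliptic-proposition-2} are squares, with the dichotomy governed by whether $2$ is a square in $\F_q$. For part (1), however, you take a genuinely different and more elementary route: the paper argues via complex multiplication, noting that $j=1728$ curves have CM by $\Q(i)$ and that $\Z[i]$ has no ideals of norm $q\equiv 3\pmod 4$, so the curves are supersingular with $a_\lambda^{\Leg}(q)=0$; you instead exploit the $x\mapsto -x$ symmetry of the character sum \eqref{TraceCharacterSum} for $y^2=x^3-x$ to get $S=\phi(-1)S$, and then propagate vanishing to $\lambda=2,1/2$ through \eqref{TwistTrace}. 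Your version is self-contained and avoids importing CM theory, at the cost of being specific to this model; the paper's version explains \emph{why} the trace vanishes (supersingularity) and generalizes to other CM $j$-invariants such as $j=0$ in Lemma~\ref{counting-LNF-supersingular-2}. One small point worth flagging in your parts (2)--(3): the ``only if'' direction of statements like ``$E_2\cong_{\F_q}E_{1/2}$ iff $2$ is a square'' is slightly delicate for $j=1728$, since such curves admit quartic twists and a quadratic twist by $d$ is trivial iff $d$ or $-d$ is a fourth-power quotient, i.e.\ iff $\pm d$ is a square; this reduces to ``$d$ is a square'' precisely because $-1$ is a square when $q\equiv 1\pmod 4$, so your argument survives, but the hypothesis is doing real work there.
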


\begin{proof}
Curves with $j=1728$ have complex multiplication by $\Q(i)$.  There are no ideals in $\Z[i]$ with norm $q\equiv 3\pmod 4$, and so (1) follows easily (for example, see \cite[Section 4]{rosen}). If $q\equiv 1\pmod 4,$ then a similar counting argument as in the proof of  Lemma~\ref{counting-LNF-not-supersingular} gives (2) and (3).
\end{proof}

For $j=0$, we have the following lemma.
\begin{lemma}\label{counting-LNF-supersingular-2} Suppose $E_{\lambda}^{\Leg}/\F_q$  has $j(E_\lambda^\Leg)=0.$

\noindent
(1) There are no such $E_{\lambda}^{\Leg}$ when $q\equiv 2\pmod 3.$

\noindent
(2) If $q\equiv 1\pmod{12},$ then $|L\left(\frac{1\pm\sqrt{-3}}{2}\right)|=2,$ and $\frac{1\pm\sqrt{-3}}{2}$ are squares in $\F_q.$

\noindent
(3) If $q\equiv 7\pmod{12},$ then $|L\left(\frac{1\pm\sqrt{-3}}{2}\right)|=1,$ and $\frac{1\pm\sqrt{-3}}{2}$ are both not squares in $\F_q.$
\end{lemma}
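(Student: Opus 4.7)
The plan is to mirror the proof of Lemma~\ref{counting-LNF-supersingular-1}, replacing the CM field $\Q(i)$ by $\Q(\sqrt{-3})$ since $j=0$ curves have complex multiplication by $\Z[\zeta_3]$. By Proposition~\ref{elliptic-proposition-1}(4), the only Legendre parameters with $j(E_\lambda^{\Leg})=0$ are $\lambda_{\pm}:=(1\pm\sqrt{-3})/2$, which are precisely the primitive sixth roots of unity. These lie in $\F_q$ if and only if $\sqrt{-3}\in\F_q$, which is equivalent to $q\equiv 1\pmod 3$ (via standard quadratic reciprocity for $p$ odd, extended to $\F_{p^r}$ by parity of $r$). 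When $q\equiv 2\pmod 3$ this fails, proving (1).

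For (2) and (3), the identities $\lambda_+\lambda_-=1$ and $\lambda_++\lambda_-=1$ give $\lambda_-=1/\lambda_+=1-\lambda_+$. A short direct computation then shows that the six Legendre parameters in Proposition~\ref{elliptic-proposition-1}(5), when evaluated at $\lambda_+$, collapse to the orbit $\{\lambda_+,\lambda_-\}$. Consequently $L(\lambda_\pm)\subseteq\{\lambda_+,\lambda_-\}$, and the only remaining question is whether $E_{\lambda_+}^{\Leg}\cong_{\F_q}E_{\lambda_-}^{\Leg}$. By Proposition~\ref{elliptic-proposition-2}(1), this isomorphism holds precisely when $\lambda_+=\zeta_6$ is a square in $\F_q$, i.e., when $\F_q$ contains a primitive twelfth root of unity, i.e., when $12\mid q-1$.

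Thus if $q\equiv 1\pmod{12}$, then $\zeta_{12}\in\F_q$, so $\lambda_\pm=\zeta_{12}^{\pm 2}$ are squares and $L(\lambda_\pm)=\{\lambda_+,\lambda_-\}$, proving (2). If instead $q\equiv 7\pmod{12}$, write $q-1=6m$ with $m$ odd; for any generator $g$ of $\F_q^\times$, the element $\zeta_6=g^{(q-1)/6}=g^m$ is a nonsquare. Hence $\lambda_\pm$ are both nonsquares, $E_{\lambda_+}^{\Leg}\not\cong_{\F_q}E_{\lambda_-}^{\Leg}$, and $L(\lambda_\pm)=\{\lambda_\pm\}$, proving (3). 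The only real bookkeeping lies in confirming that the remaining four twist identifications in Proposition~\ref{elliptic-proposition-2} produce no additional $\F_q$-isomorphisms: in case (3) the twist constants $-1$, $\lambda_-$, $-\lambda_+$, and $\lambda_+-1$ either are nonsquares (so yield no isomorphism) or are squares but map $\lambda_+$ back to itself, a check done via multiplicativity of the quadratic character together with $-1\notin(\F_q^\times)^2$. This case analysis is the sole obstacle and is purely mechanical.
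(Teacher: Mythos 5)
Your proof is correct and follows essentially the same route as the paper, which simply observes that claim (1) follows from the unsolvability of $\lambda^2-\lambda+1=0$ (equivalently $\sqrt{-3}\notin\F_q$) and that (2) and (3) are proved by the same square-counting argument with Proposition~\ref{elliptic-proposition-2} used for Lemma~\ref{counting-LNF-not-supersingular}. Your explicit identification of $\frac{1\pm\sqrt{-3}}{2}$ with the primitive sixth roots of unity, and the resulting collapse of the six-element $j$-orbit to $\{\lambda_+,\lambda_-\}$, is exactly the bookkeeping the paper leaves to the reader.
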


\begin{proof} Claim (1) follows from the unsolvability of
$j(E^{\Leg}_\lambda)=2^8\cdot (\lambda^2-\lambda+1)^3/\lambda^2(\lambda-1)^2=0.$
The proofs of claims (2) and (3) are analogous to the proof of Lemma~\ref{counting-LNF-not-supersingular}.
\end{proof}

To obtain the desired reformulation of the power moments of the $_2F_1$ hypergeometric functions, we make use of the fact that $\Z2\times \Z2 \subseteq E^{\Leg}_{\lambda}(\F_q).$  Our final reformulation makes use of this observation, combined with the fact that certain Hurwitz class numbers enumerate isomorphism classes of elliptic curves with prescribed subgroups and fixed Frobenius traces.

\begin{lemma}\label{ExistenceOfLNF3mod4}
If $q\equiv 3\pmod 4,$ and $E/\F_q$ is an elliptic curve for which $\Z2\times\Z2\subseteq E(\F_q)$, then $E$ is isomorphic to a Legendre normal form elliptic curve over $\F_q.$
\end{lemma}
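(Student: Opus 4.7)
The hypothesis $\Z/2\Z \times \Z/2\Z \subseteq E(\F_q)$ means that all three nontrivial $2$-torsion points of $E$ are $\F_q$-rational, so $E$ admits a Weierstrass model $E\colon y^2 = (x - e_1)(x - e_2)(x - e_3)$ with distinct $e_1, e_2, e_3 \in \F_q$. My plan is to normalize this to Legendre form by an explicit linear change of variables, identify the twist-class obstruction that arises, and then use the hypothesis $q \equiv 3 \pmod 4$ to show that this obstruction can always be removed by a suitable relabeling of the roots.

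Concretely, I would substitute $x \mapsto (e_2 - e_1)x + e_1$, which factors the three linear terms as $(e_2-e_1)x$, $(e_2-e_1)(x-1)$, and $(e_2-e_1)(x-\lambda)$ with $\lambda := (e_3 - e_1)/(e_2 - e_1)\in \F_q\setminus\{0,1\}$. The equation of $E$ becomes
$$
y^2 \;=\; d \cdot x(x-1)(x-\lambda), \qquad d := (e_2 - e_1)^3,
$$
which is precisely the $d$-quadratic twist of $E_\lambda^{\Leg}$ in the convention fixed in the discussion preceding Proposition~\ref{elliptic-proposition-2}. Hence $E \cong_{\F_q} E_\lambda^{\Leg}$ if and only if $d$ lies in $(\F_q^{\times})^2$, equivalently if and only if $e_2 - e_1$ is a square in $\F_q$, since $d$ and $e_2-e_1$ lie in the same class of $\F_q^{\times}/(\F_q^{\times})^2$.

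To finish, I would exploit the freedom to relabel the roots: any permutation of $\{e_1, e_2, e_3\}$ runs through the same argument, and the condition becomes that the corresponding ``$e_2 - e_1$'' is a square. Here the hypothesis $q \equiv 3 \pmod 4$ is decisive, since $-1$ is a non-square in $\F_q^{\times}$; hence for every nonzero $a \in \F_q$, exactly one of $a$ and $-a$ is a square. Applying this to $a = e_2 - e_1$ and swapping $e_1 \leftrightarrow e_2$ if necessary, we can always arrange $e_2 - e_1 \in (\F_q^{\times})^2$, so the substitution above produces an isomorphism $E \cong_{\F_q} E_\lambda^{\Leg}$ defined over $\F_q$. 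There is no genuinely hard step in this argument: it is a single linear change of variables combined with one observation about squares. The only care needed is the bookkeeping around the twist convention and the verification that $(e_2-e_1)^3$ is a square iff $e_2-e_1$ is. The argument visibly breaks when $q \equiv 1 \pmod 4$, where all three pairwise differences could simultaneously be non-squares, matching the restriction built into the lemma.
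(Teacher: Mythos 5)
Your argument is correct and is essentially the paper's own proof: both write $E$ as $y^2=(x-e_1)(x-e_2)(x-e_3)$ with rational roots, use $q\equiv 3\pmod 4$ to arrange (after swapping two roots) that the relevant difference is a square, and then apply the same linear change of variables to reach $E_\lambda^{\Leg}$ with $\lambda=(e_3-e_1)/(e_2-e_1)$. Your write-up merely makes explicit the twist bookkeeping and the role of $-1$ being a non-square, which the paper leaves implicit.
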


\begin{proof}
Since $\Z2\times\Z2\subseteq E(\F_q),$ $E$ is given by
$$
E:\ \ \ y^2=(x-\alpha)(x-\beta)(x-\gamma),
$$
where $\alpha, \beta,\gamma\in\F_q.$
After possibly exchanging $\alpha$ and $\beta,$ we may assume that $\beta-\alpha$ is a square.
Under the transformations $y=(\beta-\alpha)^{3/2}Y$ and $ x=(\beta-\alpha)X+\alpha,$ $E\cong E_\lambda^\Leg,$ where $\lambda=\frac{\gamma-\alpha}{\beta-\alpha}.$
\end{proof}

 As the previous lemma indicates, if $q\equiv 3\pmod 4,$ then every
$E/\F_q$ with $\Z2\times\Z2\subseteq E(\F_q)$ is isomorphic over $\F_q$ to a Legendre normal form curve. 
Unfortunately, this is not the case when $q\equiv 1\pmod 4,$ and we call those $E$
without such isomorphic Legendre forms  {\it inconvenient.}

\begin{lemma}\label{ExistenceOfLNF1mod4} Suppose that $q\equiv 1\pmod 4$ and that $E/\F_q$ is inconvenient.

\noindent
(1)  We have that $|E(\F_q)|\not\equiv 0\pmod 8.$

\noindent
(2) There is a $\lambda\in\F_q\setminus\{0,1\}$ and $d\in\F_q,$ where $d\not \in \F_q^2,$ such that $\Z4\times\Z4\subseteq E_\lambda^{\Leg}(\F_q)$  and $E_d\cong E_\lambda^{\Leg}$ over $\F_q.$

\noindent
(3) The phenomenon in (2) induces a bijection between $\F_q$-isomorphism classes of inconvenient curves and those classes for which $\Z4\times\Z4$ is a subgroup of $\F_q$-rational points.
\end{lemma}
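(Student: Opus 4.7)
The plan is to reduce all three parts to the classical 2-descent criterion: for $E\colon y^2=(x-e_1)(x-e_2)(x-e_3)$ over $\F_q$ with $e_1,e_2,e_3\in\F_q$, the 2-torsion point $(e_i,0)$ lies in $2E(\F_q)$ if and only if both $e_i-e_j$ and $e_i-e_k$ are squares in $\F_q$ (see \cite{silverman}, Chapter X, or derive it directly from the 2-division polynomial). Throughout we use that $q\equiv 1\pmod 4$ forces $-1$ to be a square in $\F_q$, so the property ``$e_i-e_j$ is a square'' is symmetric in $i$ and $j$.

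For part (1), write $E\colon y^2=(x-\alpha)(x-\beta)(x-\gamma)$; inconvenience means none of $\beta-\alpha$, $\gamma-\alpha$, $\gamma-\beta$ is a square, so the criterion forces every nontrivial 2-torsion point to lie outside $2E(\F_q)$. Writing $E(\F_q)\cong\Z/m\Z\times\Z/n\Z$ with $m\mid n$ (both even since the full 2-torsion is rational), a direct check on $\Z/m\oplus\Z/n$ shows that the three nontrivial 2-torsion classes $(m/2,0)$, $(0,n/2)$, $(m/2,n/2)$ all fail to lie in $2(\Z/m)\oplus 2(\Z/n)$ precisely when $4\nmid m$ and $4\nmid n$. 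Writing $m=2m'$, $n=2n'$ with $m',n'$ odd then gives $|E(\F_q)|=4m'n'\equiv 4\pmod 8$.

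For part (2), I would substitute $x=(\beta-\alpha)X+\alpha$ and $y=(\beta-\alpha)Y$ to convert $E$ into $Y^2=d\cdot X(X-1)(X-\lambda)$, where $d:=\beta-\alpha$ is a non-square and $\lambda:=(\gamma-\alpha)/(\beta-\alpha)$. This is exactly the quadratic twist $(E_\lambda^\Leg)_d$, hence $E_d\cong_{\F_q}E_\lambda^\Leg$. Because $\lambda$ is a ratio of two non-squares and $1-\lambda=-(\gamma-\beta)/(\beta-\alpha)$ is $-1$ times such a ratio, both $\lambda$ and $1-\lambda$ are squares in $\F_q$. The criterion then places every nontrivial 2-torsion point of $E_\lambda^\Leg$ inside $2E_\lambda^\Leg(\F_q)$, which by the structure argument of part (1) now forces $4\mid m$ and $4\mid n$, i.e., $\Z/4\Z\times\Z/4\Z\subseteq E_\lambda^\Leg(\F_q)$.

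For part (3), the map sends $[E]$ to the class of the curve $E_\lambda^\Leg$ produced in part (2); equivalently, it is ``twist by a fixed non-square $d_0$''. Since twisting twice by $d_0$ has effect $d_0^2$, a square, this is an involution on $\F_q$-isomorphism classes of curves with full rational 2-torsion. A computation analogous to part (2) shows that the non-square twist of a Legendre curve having $\Z/4\Z\times\Z/4\Z$ rational is inconvenient, and conversely part (1) rules out $16\mid|E(\F_q)|$ for inconvenient $E$, so every $E/\F_q$ with $\Z/4\Z\times\Z/4\Z$ rational must itself be isomorphic to a Legendre curve. The only nontrivial ingredient is the classical 2-descent criterion; once it is in hand, the remainder reduces to bookkeeping with quadratic residues and the structure theorem for finite abelian groups.
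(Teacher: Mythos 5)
Your proof is correct and follows essentially the same route as the paper: both rest on the classical 2-descent criterion of Proposition X.1.4 of \cite{silverman} applied to the pairwise root differences, together with the explicit non-square quadratic twist carrying an inconvenient curve to a Legendre curve with $\lambda$ and $1-\lambda$ both squares. Your write-up simply fills in the group-theoretic bookkeeping (the $\Z/m\times\Z/n$ analysis and the involution argument for part (3)) that the paper leaves implicit in the phrase ``one then applies the 2-descent lemma again.''
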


\begin{proof}
Let $E$ be an elliptic curve defined by
$$
E:\ \ \ y^2=x(x-\alpha)(x-\beta),
$$
where $\alpha,\beta,\alpha-\beta$ are non-squares in $\F_q.$
The classical 2-descent lemma (for example, see Proposition X.1.4 of \cite{silverman})  
indicates when a rational point $P$ is a double of another rational point, say $Q$.
By our assumptions on $\alpha$ and $\beta$, we find that none of the 2-torsion points are doubles, and so we have that
 $|E(\F_q)|\not\equiv 0\pmod 8.$
Furthermore, the $\alpha$-twist $E_\alpha$ is 
$$
E_\alpha:\ \ \ y^2=\alpha x(x-\alpha)(x-\beta),
$$
and under the transformation $x=\alpha X, y=Y/\alpha^2,$ this is equivalent to
$$
E^\Leg_{\beta/\alpha}:\ \ \ Y^2=X(X-1)(X-\beta/\alpha).
$$
One then applies the 2-descent lemma again.
\end{proof}

We conclude with a classification of those Legendre normal form with $\Z4\times \Z4\subseteq E_{\lambda}^{\Leg}(\F_q).$

\begin{lemma}\label{2TorsionStructure} Suppose that $q\equiv 1\pmod 4$
and $\lambda\in\F_q\setminus\{0,1\}.$ Then we have that $\Z4\times\Z4\subseteq E_\lambda^\Leg(\F_q)$ if and only if $\lambda$ and $1-\lambda$ are both squares in $\F_q.$
\end{lemma}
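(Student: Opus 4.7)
The plan is to analyze when each non-trivial $2$-torsion point of $E_\lambda^\Leg$ is the double of an $\F_q$-rational point, using the classical $2$-descent criterion (Proposition X.1.4 of \cite{silverman}), and then translate this into the structure of $E_\lambda^\Leg(\F_q)[4]$.

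First, I would record the following reformulation: since $E_\lambda^\Leg[2]\subseteq E_\lambda^\Leg(\F_q)$ for every Legendre curve, and since $E[4]/E[2]\cong E[2]$ as Galois modules, the inclusion $\Z/4\times\Z/4\subseteq E_\lambda^\Leg(\F_q)$ is equivalent to the statement that every non-trivial $2$-torsion point lies in $2E_\lambda^\Leg(\F_q)$. In fact, by the additivity of the connecting map $E(\F_q)/2E(\F_q)\to \F_q^\times/(\F_q^\times)^2\times \F_q^\times/(\F_q^\times)^2$, it suffices to check that any \emph{two} of the three non-trivial $2$-torsion points lie in $2E_\lambda^\Leg(\F_q)$, because the third one is then their sum.

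Next, I would apply the $2$-descent criterion to the three $2$-torsion points $T_0=(0,0)$, $T_1=(1,0)$, $T_\lambda=(\lambda,0)$ of the curve $y^2=x(x-1)(x-\lambda)$. The standard statement is that for $E:y^2=(x-e_1)(x-e_2)(x-e_3)$ the $2$-torsion point $(e_i,0)$ lies in $2E(K)$ if and only if $e_i-e_j$ and $e_i-e_k$ are both squares in $K$ (where $\{i,j,k\}=\{1,2,3\}$). Using the assumption $q\equiv 1\pmod 4$, so that $-1\in (\F_q^\times)^2$, this criterion becomes
\begin{align*}
T_0\in 2E_\lambda^\Leg(\F_q) &\iff -1 \text{ and } -\lambda \text{ are squares } \iff \lambda\in (\F_q^\times)^2,\\
T_1\in 2E_\lambda^\Leg(\F_q) &\iff 1 \text{ and } 1-\lambda \text{ are squares } \iff 1-\lambda\in (\F_q^\times)^2,\\
T_\lambda\in 2E_\lambda^\Leg(\F_q) &\iff \lambda \text{ and } \lambda-1 \text{ are squares } \iff \lambda,1-\lambda\in (\F_q^\times)^2.
\end{align*}

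Finally, I would combine the two observations. The first paragraph says that $\Z/4\times\Z/4\subseteq E_\lambda^\Leg(\F_q)$ if and only if at least two of the three conditions above hold; the explicit descriptions make it clear that this is equivalent to the simultaneous conditions $\lambda\in (\F_q^\times)^2$ and $1-\lambda\in (\F_q^\times)^2$, proving both directions. For the forward direction one just reads off $T_0, T_1 \in 2E_\lambda^\Leg(\F_q)$; for the converse, given $P_0,P_1\in E_\lambda^\Leg(\F_q)$ with $2P_i = T_i$, the points $P_0,P_1$ have order $4$ and their doubles generate $E[2]$, so $\langle P_0,P_1\rangle$ is a subgroup of $E_\lambda^\Leg(\F_q)$ containing $E[2]$ with index $4$, hence isomorphic to $\Z/4\times\Z/4$. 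The main technical point is simply to invoke the correct form of the $2$-descent criterion; everything else is routine bookkeeping with the hypothesis $q\equiv 1\pmod 4$.
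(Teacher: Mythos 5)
Your proof is correct and follows exactly the route the paper intends: its entire proof of this lemma is the one-line remark that the claim ``follows easily again by the 2-descent lemma,'' and your argument is precisely the fleshed-out version of that, applying Proposition X.1.4 of \cite{silverman} to the three $2$-torsion points and using $q\equiv 1\pmod 4$ to make $-1$ a square. The bookkeeping (the reduction of $\Z4\times\Z4\subseteq E(\F_q)$ to all three $2$-torsion points being doubles, and the verification that ``at least two of the three conditions'' collapses to ``$\lambda$ and $1-\lambda$ are both squares'') is all accurate.
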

\begin{proof}
This claim follows easily again by the 2-descent lemma. \end{proof}

\subsection{Isomorphism classes of elliptic curves with prescribed subgroups}

We have reformulated the  moments of the $_2F_1$  functions as sums over isomorphism classes of elliptic curves for which $\Z2\times\Z2 \subseteq E(\F_q)$. Therefore, we  seek formulas for the number of such classes. Thankfully, these are known
due to work of Schoof \cite{schoof}, and they involve Hurwitz class numbers.

To make this precise, we first recall some notation.
 If $-D<0$ such that $-D\equiv0,1\pmod{4},$ then denote by $\mathcal{O}(-D)$ the unique imaginary quadratic order with discriminant $-D.$ Let $h(D)=h(\mathcal{O}(-D))$ denote\footnote{We note that $H(D)=H^*(D)=h(D)=0$ whenever $-D$ is neither zero nor a negative discriminant.} the order of the class group of $\mathcal{O}(-D)$ and let $\omega(D)=\omega(\mathcal{O}(-D))$ denote half the number of roots of unity in $\mathcal{O}(-D).$ Furthermore, define
\begin{equation}
H(D):=\sum\limits_{\mathcal{O}\subseteq\mathcal{O'}\subseteq\mathcal{O}_{\text{max}}}h(\mathcal{O'})
\ \ \ {\text {\rm and}}\ \ \ 
\ \ H^{\ast}(D):=\sum\limits_{\mathcal{O}\subseteq\mathcal{O'}\subseteq\mathcal{O}_{\text{max}}}\frac{h(\mathcal{O'})}{\omega(\mathcal{O'})},
\end{equation}
where the sum is over all orders $\mathcal{O'}$ between $\mathcal{O}$ and the maximal order $\mathcal{O}_{\text{max}}.$
The following theorem of Schoof \cite{schoof} gives the results we require.

\begin{theorem}[Section 4 of \cite{schoof}]\label{Schoof} If $p\geq 5$ is prime, and  $q=p^r,$ then the following are true.

\noindent
(1) If $n\geq 2$ and  $s$ is a nonzero integer for which  $p|s$ and $s^2\neq 4q,$  then there are no elliptic curves $E/\F_q$
with $|E(\F_q)|=q+1-s$ and 
  $\Z n\times \Z n\subseteq E(\F_q).$

\noindent
(2)  If $r$ is even and $s=\pm 2p^{r/2},$ then the number of isomorphism classes of elliptic curves over $\F_q$ with
$\Z2 \times \Z2 \subseteq E(\F_q)$ and
 $|E(\F_q)|=q+1-s$ is
\begin{equation}\label{Ap}
S(p):=\frac{1}{12}\left(p+6-4\leg{-3}{p}-3\leg{-4}{p}\right),
\end{equation}
where $\leg{\cdot}{p}$ is the Legendre symbol.

\noindent
(3)  Suppose that $n$ and $s$ are integers such that $s^2\leq 4q,$ $p\nmid s,$ $n^2\mid (q+1-s),$ and $n\mid (q-1).$ Then the number of isomorphism classes of elliptic curves over $\F_q$ with
$|E(\F_q)|=q+1-s$ and $\Z n\times\Z n\subseteq E(\F_q)$ is $H\left(\frac{4q-s^2}{n^2}\right).$
\end{theorem}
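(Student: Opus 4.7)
The plan is to reduce the counting problem to class-number data via Deuring's correspondence and then to encode the level-$n$ structure as an algebraic condition on the endomorphism ring. Throughout, let $\pi$ denote the $q$-Frobenius endomorphism, which satisfies $\pi^2 - s\pi + q = 0$, and write $K = \Q(\sqrt{s^2 - 4q})$ when $s^2 < 4q$.

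For part (3), where $p \nmid s$, the isogeny class is ordinary and Deuring's theorem identifies the $\F_q$-isomorphism classes of curves with trace $s$ with the disjoint union $\coprod_{\mathcal{O}'} \Cl(\mathcal{O}')$, where $\mathcal{O}'$ ranges over orders of $K$ containing $\Z[\pi]$; the endomorphism ring of the curve attached to an ideal class is precisely $\mathcal{O}'$. The condition $\Z n \times \Z n \subseteq E(\F_q)$ is equivalent to $\pi$ acting as the identity on $E[n]$, i.e., to $(\pi - 1)/n \in \mathrm{End}(E)$; the Weil pairing then forces $n \mid q - 1$ and Lagrange's theorem gives $n^2 \mid q + 1 - s$. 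The smallest order in which $(\pi - 1)/n$ lies is $\Z[(\pi - 1)/n]$, whose monic minimal polynomial $X^2 + \frac{2 - s}{n} X + \frac{q + 1 - s}{n^2}$ has discriminant $(s^2 - 4q)/n^2$. Summing $h(\mathcal{O}')$ over all orders sandwiched between $\Z[(\pi - 1)/n]$ and $\mathcal{O}_K$ yields exactly $H((4q - s^2)/n^2)$, establishing (3).

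For part (1), I invoke Waterhouse's classification of isogeny classes of supersingular curves over $\F_q$: when $p \mid s$, $s^2 \leq 4q$, and $s^2 \neq 4q$, the admissible traces are restricted to $s = 0$, $s = \pm p^{(r+1)/2}$ (for $r$ odd), or certain subcases $s = \pm p^{r/2}$ with congruence restrictions on $p$. In each subcase the arithmetic constraints $n \mid q - 1$ (Weil pairing) and $n^2 \mid q + 1 - s$ (Lagrange) combine with the shape of the admissible endomorphism algebras to preclude the existence of any $E$ with $\Z n \times \Z n \subseteq E(\F_q)$ for $n \geq 2$. For part (2), the case $\pi = \pm p^{r/2}$ is the integer-Frobenius situation in which every curve is supersingular, $\mathrm{End}(E)$ is a maximal order in the quaternion algebra $B_{p,\infty}$, and $E(\F_q) \cong (\Z/(p^{r/2} \mp 1)\Z)^2$ automatically contains the full $2$-torsion. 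The explicit count $S(p)$ then follows from Eichler's mass formula for $B_{p,\infty}$ with the usual corrections for the $j$-invariants $0$ and $1728$, whose automorphism groups have orders $6$ and $4$ respectively.

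The main obstacle is the precise translation between $\F_q$-rational $n$-torsion and the content of the endomorphism ring, namely verifying the equivalence $E[n] \subseteq E(\F_q) \Leftrightarrow (\pi - 1)/n \in \mathrm{End}(E)$ and checking that Deuring's correspondence restricts cleanly to curves with this prescribed level structure without overcounting quadratic twists. Once this dictionary is secure, (3) is pure bookkeeping over orders in $K$, (1) is a case check against the Waterhouse list, and (2) reduces to the Eichler mass formula with the standard CM automorphism-weight corrections.
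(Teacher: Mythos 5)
The first thing to say is that the paper does not prove this theorem at all: it is imported wholesale from Schoof, and the Remark immediately following it records precisely which results of \cite{schoof} are being invoked (Theorem 4.2(ii)--(iii) and Lemma 4.8(i) for part (1), Theorem 4.6 and Lemma 4.8(ii) for part (2), the proof of Theorem 4.9(i) for part (3)). So there is no in-paper argument to compare against; what your proposal does is reconstruct, in outline, Schoof's own proof, and the outline is essentially correct. The dictionary you flag as the main obstacle is in fact unproblematic: $\pi-1$ is a separable isogeny with kernel $E(\F_q)$, so $E[n]\subseteq\ker(\pi-1)$ if and only if $\pi-1$ factors through $[n]$, i.e. $(\pi-1)/n\in\mathrm{End}(E)$; separability of $[n]$ is automatic because the Weil-pairing condition $n\mid q-1$ forces $p\nmid n$. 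Your computation that $\Z[(\pi-1)/n]$ has minimal polynomial $X^2+\frac{2-s}{n}X+\frac{q+1-s}{n^2}$ (integral, since $n\mid q-1$ and $n\mid q+1-s$ give $n\mid s-2$) and discriminant $(s^2-4q)/n^2$ is right, and summing $h(\mathcal{O}')$ over the orders between this one and $\mathcal{O}_K$ gives $H\left(\frac{4q-s^2}{n^2}\right)$ by the paper's definition of $H$. So (3) is sound.

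Two places are thinner than they should be. In (1), the appeal to ``the shape of the admissible endomorphism algebras'' is both vague and unnecessary: for $p\ge 5$, Waterhouse leaves only $s=\pm p^{r/2}$ with $r$ even (every other nonzero $s$ with $p\mid s$ and $s^2\neq 4q$ is not the trace of any curve), and writing $t=p^{r/2}$ one has $\gcd(q-1,\,q+1-s)=\gcd(t^2-1,\,t^2\mp t+1)$, which divides $3$, while $t^2\mp t+1\not\equiv 0\pmod 9$ for any $t$; hence no $n\ge 2$ can satisfy both $n\mid q-1$ and $n^2\mid q+1-s$. You should make this two-line computation explicit rather than gesture at endomorphism algebras. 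In (2), the mass-formula bookkeeping is the easy half; the real content is the claim that the $\F_q$-isomorphism classes with $\pi=\pm p^{r/2}$ are in bijection with the left ideal classes of a maximal order in $B_{p,\infty}$ (equivalently, with supersingular $j$-invariants). Since curves with $j=0$ or $1728$ admit extra twists, one must verify that distinct $\F_q$-forms of the same $\overline{\F}_q$-curve cannot share the integer Frobenius $m$; that verification is exactly what Schoof's Theorem 4.6 supplies and is the one step your sketch leaves genuinely unaddressed.
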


\begin{remark} Theorem~\ref{Schoof} is a compilation of various results from \cite{schoof}.
Namely, (1) follows from Theorem 4.2 (ii-iii) and Lemma 4.8 (i). Claim
(2) follows from Theorem 4.6 and Lemma 4.8 (ii).
Finally, (3) is a consequence of the proof of Theorem 4.9 (i).
\end{remark}

\begin{remark} The number $S(p)$ defined in(\ref{Ap}) 
also happens to be
the number of isomorphism classes of supersingular elliptic curves over $\overline{\F}_p$ (for example, see Proposition 2.49 of \cite{ono-book}).
\end{remark}

\subsection{Formulas for $_2F_1$ moments}

Finally, we assemble the results of the previous subsections to obtain the desired weighted class number sum expressions for the power moments.

\begin{proposition}\label{Moments2F1} Suppose that $p\geq 5$ is prime.
 If $r$ and $m$ are positive integers, then the following are true for $q=p^r,$ where in each summation we have that
 $-2\sqrt{q}\leq s\leq 2\sqrt{q}.$

\noindent
(1)  If $r$ is odd and $m$ is even, then we have
$$
q^m\sum\limits_{\lambda\in\F_q}{_2F_1}(\lambda)_q^m=1+3\sum\limits_{\substack{\gcd(s,p)=1 \\ s\equiv q+1\pmod 4}}
H^\ast\left(\frac{4q-s^2}{4}\right)s^{m}.
$$

\noindent
(2) If $r$ and $m$ are both even, then there is a rational number $C(q)\in [0,6]$ for which
$$
q^m\sum\limits_{\lambda\in\F_q}{_2F_1}(\lambda)_q^m=1+C(q) S(p)\cdot q^{m/2}+3\sum\limits_{\substack{\gcd(s,p)=1 \\ s\equiv q+1\pmod 4}}H^\ast\left(\frac{4q-s^2}{4}\right)s^{m}.
$$

\noindent
(3)  If $q\equiv 3\pmod 4$ and $m$ is odd, then we have
$
q^m\sum\limits_{\lambda\in\F_q}{_2F_1}(\lambda)_q^m=1.
$

\noindent
(4)  If $q\equiv 1\pmod 4$ and $m$ is odd, then there is a rational number $D(q)\in [-6,6]$ for which
\begin{displaymath}
\begin{split}
&q^m\sum\limits_{\lambda\in\F_q}{_2F_1}(\lambda)_q^m\\
&\ \ =-1-2\sum\limits_{\substack{\gcd(s,p)=1 \\ s\equiv q+1\pmod {8}}}H^\ast\left(\frac{4q-s^2}{4}\right)s^m-4\sum\limits_{\substack{\gcd(s,p)=1 \\ s\equiv q+1\pmod {16}}} H^\ast\left(\frac{4q-s^2}{16}\right)s^m-D(q)S(p) q^{m/2}.
\end{split}
\end{displaymath}
\end{proposition}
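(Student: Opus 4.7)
The plan is to express the moment $q^m \sum_\lambda {}_2F_1(\lambda)_q^m$ as a weighted sum over $\F_q$-isomorphism classes of elliptic curves with prescribed 2-torsion and Frobenius traces, and then apply Schoof's Theorem~\ref{Schoof}. First, Theorem~\ref{Trace2F1} gives $q\cdot {}_2F_1(\lambda)_q = -\phi(-1) a_\lambda^{\Leg}(q)$ on $\F_q\setminus\{0,1\}$, so the bulk of the moment equals $(-\phi(-1))^m \sum_{\lambda \neq 0, 1} a_\lambda^{\Leg}(q)^m$. The boundary terms at $\lambda = 0$ and $\lambda = 1$ are handled directly: ${}_2F_1(0)_q = 0$ because $\chi(0) = 0$ for every character, while extending (\ref{TraceCharacterSum}) to $\lambda = 1$ gives $a_1^{\Leg}(q) = 1$, hence $q\cdot {}_2F_1(1)_q = -\phi(-1)$. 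The total boundary contribution is $(-\phi(-1))^m$, matching the $\pm 1$ constants in each of the four cases.

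Next I would fiber $\F_q\setminus\{0,1\}$ over the Legendre $\F_q$-isomorphism classes, obtaining
\begin{equation*}
\sum_{\lambda \in \F_q \setminus\{0,1\}} a_\lambda^{\Leg}(q)^m = \sum_{[E] \text{ Legendre}} |L(\lambda_E)| \cdot a_E(q)^m,
\end{equation*}
where $|L(\lambda)|$ is tabulated in Lemmas~\ref{counting-LNF-not-supersingular}, \ref{counting-LNF-supersingular-1}, and \ref{counting-LNF-supersingular-2}. When $q \equiv 3 \pmod 4$, Lemma~\ref{ExistenceOfLNF3mod4} identifies this iso-class set with \emph{all} classes satisfying $\Z/2\Z\times \Z/2\Z \subseteq E(\F_q)$, and Theorem~\ref{Schoof}(3) with $n = 2$ counts classes of trace $s$ by $H\bigl(\tfrac{4q-s^2}{4}\bigr)$, producing the congruence $s \equiv q+1 \pmod 4$ from $n^2 \mid q+1-s$. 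When $q \equiv 1 \pmod 4$, Lemma~\ref{ExistenceOfLNF1mod4} separates the $\Z/2\Z\times\Z/2\Z$ iso classes into Legendre ones (stratified further by $|L| \in \{2, 4, 6\}$) and ``inconvenient'' ones, and identifies the inconvenient classes with classes carrying an $\F_q$-rational $\Z/4\Z\times\Z/4\Z$ subgroup via an explicit quadratic twist that only flips the sign of the Frobenius trace. Theorem~\ref{Schoof}(3) with $n = 4$ then yields the $H^*\bigl(\tfrac{4q-s^2}{16}\bigr)$ summand in part (4), with congruence $s \equiv q+1 \pmod{16}$ arising from $16 \mid q+1-s$; Lemma~\ref{2TorsionStructure} ensures this $\Z/4\Z\times\Z/4\Z$ stratification is compatible with the division by $|L|$.

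The transition from Schoof's $H$ to the proposition's $H^*$ is effected by the extra automorphisms at $j \in \{0, 1728\}$. For $j \notin \{0, 1728\}$ the weight $|L|$ takes its generic value and $H^*=H$, while at $j = 0$ (respectively $j = 1728$) the weight $|L|$ drops by a factor $\omega = 3$ (respectively $\omega = 2$) by Lemmas~\ref{counting-LNF-supersingular-1} and \ref{counting-LNF-supersingular-2}, exactly cancelling the $h/\omega$ weights in $H^*$. For even $r$, Theorem~\ref{Schoof}(2) governs the supersingular traces $s = \pm 2 p^{r/2}$ (excluded from part (3) of Schoof since $p \mid s$), contributing $S(p)\cdot q^{m/2}$ times a rational constant that records the total $|L|$-weight on these classes: namely $C(q) \in [0, 6]$ in part (2) and $D(q) \in [-6, 6]$ in part (4). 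Part (3) is immediate: for $q \equiv 3 \pmod 4$ the involution $\lambda \leftrightarrow 1 - \lambda$, which by Proposition~\ref{elliptic-proposition-2}(2) corresponds to twisting by the non-square $-1$, sends $a_\lambda^{\Leg}(q) \mapsto -a_\lambda^{\Leg}(q)$; the fixed point $\lambda = 1/2$ has $j = 1728$ and hence $a_{1/2}^{\Leg}(q) = 0$ by Lemma~\ref{counting-LNF-supersingular-1}(1), so the odd-power sum vanishes and only the boundary $1$ survives.

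The main obstacle is the bookkeeping required to match the weighted $|L|$-sum against the $H^*$-sum in each case; one must simultaneously stratify by the residue of $q \pmod{12}$ (controlling existence of the $j = 0, 1728$ iso classes), the parity of $r$ (controlling whether the supersingular trace $\pm 2 p^{r/2}$ is attained), and the squareness of $\lambda$ and $1-\lambda$ (controlling $|L(\lambda)|$ when $q \equiv 1 \pmod 4$, and thereby splitting the part (4) sum into pieces with congruences modulo $8$ and modulo $16$). The claimed bounds $C(q) \in [0, 6]$ and $D(q) \in [-6, 6]$ then follow from tallying, over the at most $S(p)$ supersingular iso classes with $|E(\F_q)| = q+1 \mp 2p^{r/2}$, how many Legendre or inconvenient representatives each admits and with what $|L|$-value---a finite but delicate combinatorial check.
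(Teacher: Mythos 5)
Your overall strategy is the same as the paper's: reduce to Frobenius traces via Theorem~\ref{Trace2F1}, fiber $\F_q\setminus\{0,1\}$ over $\F_q$-isomorphism classes using the $|L(\lambda)|$ counts of Lemmas~\ref{counting-LNF-not-supersingular}--\ref{counting-LNF-supersingular-2}, count classes of given trace with Theorem~\ref{Schoof}, account for the inconvenient classes via Lemma~\ref{ExistenceOfLNF1mod4} (whose twist flips the sign of $s$, producing the $(-s)^m$ correction), and absorb the $j\in\{0,1728\}$ automorphism weights into the passage from $H$ to $H^\ast$. Your treatment of part (3) by the $\lambda\mapsto 1-\lambda$ involution is the geometric counterpart of the paper's use of Greene's transformation ${_2F_1}(\lambda)_q=-{_2F_1}(1-\lambda)_q$ and is equivalent.

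There is, however, one concrete error: the boundary term at $\lambda=1$. Theorem~\ref{Trace2F1} is stated only for $\lambda\in\F_q\setminus\{0,1\}$ and cannot be ``extended'' to $\lambda=1$; the correct value, which the paper itself invokes in its proof of part (3), is ${_2F_1}(1)_q=1/q$, so the boundary contributes $q^m\cdot{_2F_1}(1)_q^m=1$ in \emph{every} case, not $(-\phi(-1))^m$. (Your computation $a_1^{\Leg}(q)=1$ from \eqref{TraceCharacterSum} is fine, but the identity $q\cdot{_2F_1}(\lambda)_q=-\phi(-1)a_\lambda^{\Leg}(q)$ simply fails at $\lambda=1$.) This coincidentally reproduces the constants $+1$ in parts (1)--(3), but in part (4) the constant $-1$ cannot come from the boundary: it must be produced by the main sum itself, through the interaction of the mod $8$ and mod $16$ congruence conditions under $s\mapsto -s$ (for odd $m$ the two residue classes of $s\pmod 8$ inside $s\equiv q+1\pmod 4$ swap and pick up a sign) together with the special $j$-invariant classes. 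Since the one explicit computation you carry out is this boundary evaluation, and the rest of the constant-matching is deferred to ``a finite but delicate combinatorial check,'' the derivation of the exact constants --- the actual content of the proposition beyond its asymptotic shape --- is not yet established by your argument.
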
 

\begin{remark} 
The rational number $C(q)$ is the average number of Legendre form  curves in an $\F_q$-isomorphism class with $a^{\Leg}(q)_\lambda=\pm 2\cdot p^{r/2}.$ Similarly, $D(q)$ is the average number of such curves in an isomorphism class with $a^{\Leg}_\lambda(q)=2p^{r/2}$ minus the average number  with $a^{\Leg}_\lambda(q)=-2 p^{r/2}.$
\end{remark}

\begin{proof}
We first prove (3) as it is a triviality.
By Theorem 4.4 of \cite{greene}, if $q\equiv 3\pmod 4$ and $\lambda\in\F_q\setminus\{0,1\},$ then ${_2F_1}(\lambda)_q=-{_2F_1}(1-\lambda)_q.$
Therefore, claim (3) follows from the resulting cancellation, combined with the fact that
$_2F_1(1)_q=1/q$ and $_2F_1(0)_q=0.$

The proofs of claims (1), (2), and (4)  are very similar. Therefore, we only prove (4) for brevity. We make use of Theorem~\ref{Schoof}, and Lemmas~\ref{counting-LNF-not-supersingular} through \ref{2TorsionStructure}.
Using Theorem~\ref{Trace2F1}, we rewrite the sum in terms of $-a_\lambda^\Leg(q)$. We then decompose the sum
$$
-\sum\limits_{\lambda\in\F_q\setminus\{0,1\}}a_\lambda(q)^m=-\sum\limits_s|I(s,q)|\cdot s^m,
$$
where $I(s,q)=\left\{\lambda\in\F_q\setminus\{0,1\} \ : \ a_\lambda^\Leg(q)=s\right\}.$
By Theorem~\ref{Schoof} and Lemmas~\ref{counting-LNF-not-supersingular}-\ref{counting-LNF-supersingular-2}, and Lemma~\ref{2TorsionStructure}, we have

\begin{align}
-\sum\limits_{\lambda\in\F_q\setminus\{0,1\}} a_q(\lambda)^m&=-4\sum\limits_{\substack{\gcd(s,p)=1 \\ s\equiv q+1\pmod {8}}}
\left[H^\ast\left(\frac{4q-s^2}{4}\right)-H^\ast\left(\frac{4q-s^2}{16}\right)\right]s^m\notag\\
&-2\sum\limits_{\substack{\gcd(s,p)=1 \\ s\not\equiv q+1\pmod {8}}}H^\ast\left(\frac{4q-s^2}{4}\right)s^m-6\sum\limits_{\substack{\gcd(s,p)=1 \\ s\equiv q+1\pmod {16}}}H^\ast\left(\frac{4q-s^2}{16}\right)s^m\notag\\
&-|I(2q^{1/2},q)|\cdot 2q^{m/2}-|I(-2q^{1/2},q)|\cdot (-2q^{m/2})+E(q,m),\notag
\end{align} 
where $E(q,m)$ is the sum over equivalence classes which do not contain a Legendre normal form.
However, by Lemma~\ref{ExistenceOfLNF1mod4}, we see that
$$
E(q,m)=2\sum\limits_{\substack{\gcd(s,p)=1 \\ -s\equiv q+1\pmod{16}}}H^\ast\left(\frac{4q-s^2}{16}\right)s^m=2\sum\limits_{\substack{\gcd(s,p)=1 \\ s\equiv q+1\pmod{16}}}H^\ast\left(\frac{4q-s^2}{16}\right)(-s)^m.
$$
The result follows by considering congruence conditions and the fact that $m$ is odd.
\end{proof}

\section{The $_3F_2(\lambda)_q$ and the arithmetic of $E^{\Cl}_{\lambda}$}\label{Section3F2}

Here we recall important facts about the $_3F_2(\lambda)_q$ values, which are related to
the squares of the trace of Frobenius for the Clausen elliptic curves $E_{\lambda}^{\Cl}.$

\begin{theorem}[Th. 5 of \cite{ono}]\label{Trace3F2}
If $\lambda\in\F_q\setminus\{0,-1\},$ $\ch(\F_q)\geq 5$ and $a^{\Cl}_\lambda(q):=q+1-|E_\lambda^{\Cl}(\F_q)|,$ then we have
$$
q+q^2\phi(\lambda+1)\cdot{_3F_2}\left(\frac{\lambda}{\lambda+1}\right)_q=a^{\Cl}_\lambda(q)^2.
$$
\end{theorem}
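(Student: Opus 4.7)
The plan is to prove this identity as a finite-field manifestation of Clausen's classical hypergeometric formula $[_2F_1(1/2,1/2;1;x)]^2 = {_3F_2}(\ldots)$, realized through the arithmetic of the Clausen elliptic curve $E^{\Cl}_\lambda$. The strategy is to express both sides as character sums over $\F_q$, and then match them by an explicit change of variables, with the additive constant $q$ arising from a boundary contribution.

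First I would write $a^{\Cl}_\lambda(q)$ as a character sum, in parallel with (\ref{TraceCharacterSum}),
\begin{equation*}
a^{\Cl}_\lambda(q) = -\sum_{x\in\F_q} \phi\bigl((x-1)(x^2+\lambda)\bigr),
\end{equation*}
and square it to obtain the double character sum
\begin{equation*}
a^{\Cl}_\lambda(q)^2 = \sum_{x,y\in\F_q} \phi\bigl((x-1)(y-1)(x^2+\lambda)(y^2+\lambda)\bigr).
\end{equation*}
On the other side, I would unfold Greene's definition of ${_3F_2}(\lambda/(\lambda+1))_q$ by expanding each Jacobi binomial $\binom{\phi\chi}{\chi}$ through (\ref{binomial}) into a character sum, and then collapsing the resulting sum over characters via the orthogonality relation $\sum_\chi \chi(a) = (q-1)\delta_{a,1}$. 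The upshot is that $q^2\phi(\lambda+1)\cdot{_3F_2}(\lambda/(\lambda+1))_q$ admits a representation as a double character sum of the same general shape as the squared-trace sum above.

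The heart of the proof is then an explicit birational change of variables transforming the double sum for $a^{\Cl}_\lambda(q)^2$ into $q + q^2\phi(\lambda+1)\cdot{_3F_2}(\lambda/(\lambda+1))_q$. The natural ansatz is a substitution such as $y = xt$, followed by shifts or inversions that absorb the quadratic factors $x^2+\lambda$ into expressions involving $\lambda+1$, so that the argument $\lambda/(\lambda+1)$ and the prefactor $\phi(\lambda+1)$ emerge automatically. The main obstacle is the bookkeeping: one must produce simultaneously the correct argument $\lambda/(\lambda+1)$, the prefactor $\phi(\lambda+1)$, and isolate the additive constant $q$, which plausibly arises from the trivial and quadratic character contributions in the $_3F_2$ expansion together with a small number of degenerate terms in the change of variables. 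Executing this requires careful use of Jacobi-sum identities, character orthogonality, and standard Gauss-sum evaluations such as $\sum_x \phi(ax^2+b) = -\phi(a)$ when $b\neq 0$, with the hypothesis $\ch(\F_q)\geq 5$ ensuring that these evaluations are valid and that the Clausen curve is smooth.
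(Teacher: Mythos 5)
This statement is quoted from the literature (Theorem 5 of \cite{ono}); the paper under review gives no proof of it, so the only meaningful comparison is with the argument in that reference. Your general framework is the right one and is essentially the classical route: write $a^{\Cl}_\lambda(q)=-\sum_x\phi\bigl((x-1)(x^2+\lambda)\bigr)$, square to get a double character sum, and unfold Greene's definition of the $_3F_2$ via (\ref{binomial}) and orthogonality into another double character sum. Those preliminary reductions are correct and routine.

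The problem is that everything after that --- which is the entire content of the theorem --- is deferred to an unexecuted ``ansatz.'' The double sum coming from Greene's definition is $\sum_{x,y}\phi\bigl(xy(x+1)(y+1)(x+\lambda y)\bigr)$ up to normalization (this is the $K3$ pencil appearing in the remark after Corollary~\ref{Distribution3F2}), i.e.\ $\phi$ of a single degree-five polynomial coupling $x$ and $y$, whereas $a^{\Cl}_\lambda(q)^2$ is $\phi$ of a product of two cubics in separate variables. Passing from one to the other is not a single substitution like $y=xt$: in the literature it is done either through Greene's finite-field analogue of Clausen's classical identity (expressing this $_3F_2$ as essentially the square of a $_2F_1$ with a quartic character parameter, which is then evaluated as the Clausen-curve trace) or through a chain of substitutions in which one must track exactly which fibers degenerate; the additive constant $q$ and the twist $\phi(\lambda+1)$ are produced by that accounting, not by generic ``trivial character contributions.'' Asserting that the constant ``plausibly arises'' from degenerate terms is not a verification, and nothing in your write-up pins down why the argument becomes $\lambda/(\lambda+1)$ rather than, say, $-\lambda$ or $\lambda+1$. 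As it stands the proposal is a correct plan of attack with the decisive computation missing, so it does not yet constitute a proof.
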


\begin{remark}  Theorem~\ref{Trace3F2} has a counterpart in terms of classical hypergeometric functions. For $0<\lambda<1,$ if $\Omega^{\Cl}(\lambda)$ is the real period of $E_{\lambda}^{\Cl},$ then
McCarthy \cite{McCarthy} proved that
$$
_3F_2\left(\begin{matrix}\onehalf &\onehalf &\onehalf\\ \ & 1\ & 1
\end{matrix} \ | \ \frac{\lambda}{\lambda+1}\right)=\frac{\sqrt{1+\lambda}}{\pi^2}\cdot \Omega^{\Cl}(\lambda)^2.
$$
\end{remark}

\subsection{Certain moments of traces of Frobenius of the Clausen elliptic curves}

The goal of this subsection is to obtain two types of power moments for the Clausen curves.
To this end, we first fix some notation. 
We let $\mathcal{C}$ denote a generic isomorphism class of elliptic curves over $\F_q,$ 
where throughout  $p\geq 5$ is prime and $q=p^r$, where $r$ is a fixed positive integer. We let $\mathcal{I}_{q}$ denote the set of all isomorphism classes of elliptic curves over $\F_{q},$ and define
\begin{equation}
I(s,q):=\left\{\mathcal{C}\in\mathcal{I}_{q} \ : \ \ \forall \ \ E\in\mathcal{C} \ {\text {\rm we have}}\ |E(\F_{q})|=q+1\pm s\right\},
\end{equation}
\begin{equation}
I_2(s,q):=\left\{\mathcal{C}\in I(s,q) \ : \ \forall \ E\in\mathcal{C}\ {\text {\rm we have}}\  E(\F_{q})[2]\cong\Z2\times\Z2\right\},
\end{equation}
where $0<s\leq 2\sqrt{q}$ is even. 
We recall that the size of $I(s,q)$ is given by Theorem~\ref{Schoof} as
$$
|I(s,q)|=\begin{cases}
2H(4q-s^2)  &\text{if } p\nmid s \\
2\cdot S(p) & \text{if } s^2=4q \text{ and } r\text{ is even } \\
0 &\text{otherwise,}
\end{cases}
$$
where  $S(p)$ is given by (\ref{Ap}).

For even $0<s\leq 2\sqrt{q},$ we  let
\begin{equation}
L(s,q)=\left\{\lambda\in\F_{q}\setminus\{0,-1\} \ : \ ~ a^\Cl_\lambda(q)=\pm s\right\}.
\end{equation}
The following proposition about most isomorphism classes with nonzero even traces of Frobenius will simplify our later calculations.

\begin{proposition}\label{EvenMomentsClausen} If $0<s\leq 2\sqrt{q}$ is even, $1/3,-1/9\not\in L(s,q),$ and $|E(\F_q)|\not \in \{q+1\pm s\}$ for any elliptic curve $E/\F_q$ with $j(E)=1728,$ then the following is true.

\noindent
(1) If $n$ is a positive integer, then
$$
\sum\limits_{\substack{\lambda\in\F_{q}\setminus\{0,-1\} \\ a^\Cl_\lambda(q)=\pm s}} a^\Cl_\lambda(q)^{2n}=s^{2n}\cdot\left(\frac{1}{2}\cdot|I(s,q)|+|I_2(s,q)|\right).
$$

\noindent
(2) If $n$ is a positive integer, then
$$
\sum\limits_{\substack{\lambda\in\F_{p^r}\setminus\{0,-1\} \\ a^\Cl_\lambda(q)=\pm s}} \phi(-\lambda)a^\Cl_\lambda(q)^{2n}=s^{2n}\cdot\left(-\frac{1}{2}\cdot|I(s,q)|+2\cdot|I_2(s,q)|\right).
$$
\end{proposition}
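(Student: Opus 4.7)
The plan is to exploit the fact that $a^\Cl_\lambda(q)^{2n} = s^{2n}$ for every $\lambda \in L(s,q)$, so that both sums factor as $s^{2n}$ times a pure counting problem. Setting $L_\pm(s,q) := \{\lambda \in L(s,q) : \phi(-\lambda) = \pm 1\}$, the first identity becomes $|L(s,q)| = \tfrac{1}{2}|I(s,q)| + |I_2(s,q)|$ and the second becomes $|L_+(s,q)| - |L_-(s,q)| = -\tfrac{1}{2}|I(s,q)| + 2|I_2(s,q)|$; together these are equivalent to the two separate identities
\begin{equation*}
|L_+(s,q)| = \tfrac{3}{2}|I_2(s,q)| \qquad \text{and} \qquad |L_-(s,q)| = \tfrac{1}{2}(|I(s,q)| - |I_2(s,q)|).
\end{equation*}
The key dictionary is that $\phi(-\lambda) = 1$ iff $x^2 + \lambda$ splits over $\F_q$, iff $\Z/2 \times \Z/2 \subseteq E^\Cl_\lambda(\F_q)$; hence $L_+(s,q)$ corresponds to $\lambda$ with $[E^\Cl_\lambda] \in I_2(s,q)$, while $L_-(s,q)$ corresponds to $\lambda$ with $[E^\Cl_\lambda] \in I_1(s,q) := I(s,q) \setminus I_2(s,q)$. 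Since $s$ is even and $q$ is odd, $|E(\F_q)| = q + 1 \pm s$ is even, so every class in $I(s,q)$ carries at least $\Z/2$ rational 2-torsion and $I(s,q) = I_1(s,q) \sqcup I_2(s,q)$.

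\textbf{Clausen parameterization and pairing under quadratic twist.} I would count $\lambda$'s over each iso class by asking when, for $E : y^2 = x^3 + ax + b$ in short Weierstrass form representing $\mathcal{C} \in I(s,q)$ and a rational 2-torsion $(\rho, 0) \in E(\F_q)$, the pointed curve $(E, (\rho, 0))$ is $\F_q$-isomorphic to some $(E^\Cl_\lambda, (1, 0))$. A direct substitution $X = \alpha x + \beta$, $Y = \alpha^{3/2} y$, combined with the normalization $\sum_i \rho_i = 0$, forces $\alpha = 3\rho/2$ and $\beta = -\rho/2$, so the isomorphism is defined over $\F_q$ precisely when $\phi(6\rho) = 1$; the resulting parameter is $\lambda = -c^2$ with $c = (\rho' - \rho'')/(3\rho)$, where $\rho', \rho''$ are the other two 2-torsion $x$-coordinates (possibly in $\F_{q^2}$). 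Under the quadratic twist $E \to E_d$ with $d$ a non-square, each $\rho$ scales to $d\rho$ and thus $\phi(6\rho)$ flips sign. Hence for each pair $\{\mathcal{C}, \mathcal{C}_d\} \subseteq I_1(s,q)$ the unique rational 2-torsion satisfies the square condition on exactly one side, giving one Clausen form per pair and $|L_-(s,q)| = \tfrac{1}{2}|I_1(s,q)|$; for each pair $\{\mathcal{C}, \mathcal{C}_d\} \subseteq I_2(s,q)$, all three rational 2-torsion points satisfy $\phi(6\rho_i) = 1$ on exactly one of the two twists, producing three distinct Clausen forms per pair and $|L_+(s,q)| = \tfrac{3}{2}|I_2(s,q)|$.

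\textbf{Handling the exceptional $j$-invariants.} The main technical obstacle is confirming that the three parameters $\lambda_i$ arising in the $I_2(s,q)$ count are genuinely distinct. A direct computation of $\lambda_i - \lambda_{i'}$ using $\sum_j \rho_j = 0$ shows that coincidence forces either $\rho_{i''} = 0$ (equivalent to $j(E) = 1728$) or $\rho_1^2 + \rho_2^2 + \rho_3^2 = 0$ (equivalent to $j(E) = 0$); the hypothesis is designed to rule both out. Converting $E^\Cl_\lambda$ itself to short Weierstrass form yields $a = \lambda - 1/3$ and $b = -2(1 + 9\lambda)/27$, so $j(E^\Cl_\lambda) = 1728 \iff \lambda = -1/9$ and $j(E^\Cl_\lambda) = 0 \iff \lambda = 1/3$. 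The third hypothesis excises every $j = 1728$ iso class from $I(s,q)$. For $j = 0$: when $q \equiv 2 \pmod 3$ such curves are supersingular with trace $0 \neq \pm s$, while when $q \equiv 1 \pmod 3$ parity of $|E(\F_q)|$ forces any $j = 0$ class in $I(s,q)$ to have full 2-torsion, and the unique quadratic twist pair of $j = 0$ curves with full 2-torsion is $\{[E^\Cl_{1/3}], [E^\Cl_{1/3}]_d\}$, killed by $1/3 \notin L(s,q)$. Under the hypotheses every class in $I(s,q)$ therefore satisfies $j \notin \{0, 1728\}$, the twist pairing is non-degenerate, and assembling the counts from the previous paragraph with the reduction in the first yields both (1) and (2).
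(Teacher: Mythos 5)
Your proposal is correct, and its skeleton coincides with the paper's: both reduce the two identities to the statement that the natural map from $L(s,q)$ to quadratic-twist pairs of classes in $I(s,q)$ is three-to-one over the full $2$-torsion pairs (with $\phi(-\lambda)=1$ on those fibers) and one-to-one over the rest (with $\phi(-\lambda)=-1$), which is exactly the paper's computation $\tfrac12\bigl(|I|-|I_2|\bigr)+\tfrac32|I_2|$ and its signed analogue. The difference is in how the fiber structure is justified: the paper simply cites Lemmas 7.1 and 7.2 of Frechette--Ono--Papanikolas \cite{fop} for surjectivity and for the $3$-to-$1$/$1$-to-$1$ dichotomy, whereas you reprove those facts from scratch via the pointed-curve parameterization $(E,(\rho,0))\mapsto\lambda=-\bigl((\rho'-\rho'')/(3\rho)\bigr)^2$, the square condition $\phi(6\rho)=1$, and its sign flip under quadratic twist. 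This buys a self-contained argument that also makes transparent why the hypotheses are exactly $1/3,-1/9\notin L(s,q)$ and the exclusion of $j=1728$ traces: your computation showing that a collision $\lambda_i=\lambda_{i'}$ forces $\rho_{i''}=0$ or $\sum\rho_j^2=0$, i.e. $j\in\{1728,0\}$, together with the identification $j(E^{\Cl}_{1/3})=0$, $j(E^{\Cl}_{-1/9})=1728$, is precisely the content the paper outsources. Two small points to tighten: (i) the phrase ``all three rational $2$-torsion points satisfy $\phi(6\rho_i)=1$ on exactly one of the two twists'' should read ``each of the three,'' since $\prod_i\phi(6\rho_i)=\phi(-6b)$ can be $-1$, so the three contributing points may be split between the two twists --- the count of three $\lambda$'s per pair (and $\phi(-\lambda)=1$ for all of them, since $c\in\F_q$) is unaffected because $\lambda$ is twist-invariant; and (ii) in the $I_1$ case you should note explicitly that $(\rho'-\rho'')^2$ is a nonsquare of $\F_q$ (Frobenius negates $\rho'-\rho''$), which is what places those $\lambda$'s in $L_-$.
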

\begin{proof}
As  $I(s,q)$ includes quadratic twists, we let $\mathcal{C}^{\text{tw}}$ be the isomorphism class of quadratic twists of curves in $\mathcal{C}$ by nonsquares in $\F_{q},$ which then gives 
$I(s,q)=\left\{\mathcal{C}_1,\ldots,\mathcal{C}_h,\mathcal{C}_1^{\text{tw}},\ldots,\mathcal{C}_h^{\text{tw}}\right\}.$
To study these isomorphism clases, it is convenient to then define
$$
\widetilde{I}(s,q)=\left\{\mathcal{C}_1\cup\mathcal{C}_1^{\text{tw}},\ldots,\mathcal{C}_h\cup\mathcal{C}_h^{\text{tw}}\right\}\ \ \ {\text {\rm and}}\ \ \ 
\widetilde{I}_2(s,q)=\left\{\mathcal{C}\cup\mathcal{C}^{\text{tw}} \ : \ ~\mathcal{C}\in I(s,q)\right\}.
$$

 By Theorem~\ref{Schoof}, we can compute $|\widetilde{I}(s,q)|$ and $|\widetilde{I_2}(s,q)|.$ Therefore, we aim to relate the cardinalities of $L(s,q), \widetilde{I}(s,q)$ and $\widetilde{I}_2(s,q).$ To this end, we define 
 $F:L(s,q)\to\widetilde{I}(s,q)$
  by 
$F(\lambda):=[E^\Cl_\lambda]\cup[E^\Cl_{\lambda}]^{\text{tw}},$
where $[E]$ is the $\F_q$-isomorphism class of elliptic curves containing $E$.

By Lemma 7.1 \cite{fop},
$F$ is surjective, unless $j(E)=1728,$ in which case $F$ misses exactly one isomorphism class. Furthermore, by Lemma 7.2 of \cite{fop}, if $1/3,-1/9\not\in L(s,q),$ then $F$ is three-to-one if and only if $-\lambda$ is a square in $\F_q,$ and is one-to-one otherwise.
To see (1), we note that the above discussion gives that
$$
\sum\limits_{\substack{\lambda\in\F_{q}\setminus\{0,-1\} \\ a^\Cl_\lambda(q)=\pm s}} a^\Cl_\lambda(q)^{2n}=s^{2n}\cdot\left(\frac{1}{2}\left(|I(s,q)|-|I_2(s,q)|\right)+\frac{3}{2}\cdot|I_2(s,q)|\right).
$$
Similarly, to obtain (2), the above discussion gives that
$$
\sum\limits_{\substack{\lambda\in\F_{q}\setminus\{0,-1\} \\ a^\Cl_\lambda(q)=\pm s}} \phi(-\lambda)a^\Cl_\lambda(q)^{2n}=s^{2n}\cdot\left(-\frac{1}{2}\left(|I(s,q)|-|I_2(s,q)|\right)+\frac{3}{2}\cdot|I_2(s,q)|\right).
$$
These two claims clearly reduce to (1) and (2) respectively.
\end{proof}

The discussion above also provides the following critical bound for $|L(s,q)|.$

\begin{proposition}\label{ClausenError}
If $0<s\leq 2\sqrt{q}$ is even, then we have
$|L(s,q)|\leq 3\cdot\max\left\{H(4q-s^2), S(p),2\right\}.$
\end{proposition}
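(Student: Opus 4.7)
The plan is to repurpose the map $F:L(s,q)\to\widetilde{I}(s,q)$, defined by $F(\lambda):=[E^\Cl_\lambda]\cup[E^\Cl_{\lambda}]^{\text{tw}}$, that was introduced in the proof of Proposition~\ref{EvenMomentsClausen}. Unlike that proposition, Proposition~\ref{ClausenError} no longer assumes $1/3,-1/9\not\in L(s,q)$ or that the $j=1728$ stratum is avoided, so the goal is just a clean upper bound rather than an exact evaluation, and the crude at-most-3-to-1 property of $F$ will suffice.

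First, I would invoke Lemma 7.2 of \cite{fop} to conclude that, regardless of what $-\lambda$ looks like modulo squares, every fiber of $F$ has size at most $3$; this gives the basic inequality $|L(s,q)|\leq 3\cdot|F(L(s,q))|$. Next, I would bound the image: each element of $\widetilde{I}(s,q)$ is the union of an $\F_q$-isomorphism class with its quadratic twist class, so $|\widetilde{I}(s,q)|\leq |I(s,q)|/2$. Applying Theorem~\ref{Schoof} then yields
\[
|\widetilde{I}(s,q)|\leq\begin{cases} H(4q-s^2) & \text{if }p\nmid s,\\ S(p) & \text{if }s^2=4q\text{ and $r$ is even},\\ 0 & \text{otherwise},\end{cases}
\]
and combining with the 3-to-1 bound produces $|L(s,q)|\leq 3\max\{H(4q-s^2),S(p)\}$ away from the exceptional locus.

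The main obstacle, and the reason for the constant $2$ inside the maximum, is the handful of $\lambda$ values that are not controlled by the clean 3-to-1 statement of Proposition~\ref{EvenMomentsClausen}: namely $\lambda\in\{1/3,-1/9\}$, which are explicitly excluded there, and the one $\F_q$-isomorphism class with $j=1728$ that $F$ may miss. I would observe that each such exceptional $\lambda$ contributes at most one element to $L(s,q)$, so the total exceptional contribution is bounded by an absolute constant. Since $3\cdot 2=6$ comfortably exceeds this contribution, the bound $|L(s,q)|\leq 3\max\{H(4q-s^2),S(p),2\}$ absorbs these edge cases; in particular, when both $H(4q-s^2)$ and $S(p)$ vanish or are very small, the bound $6$ from the $\max$-with-$2$ slack still dominates. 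The only technical checking required is to verify that at most two exceptional $\lambda$ can ever lie in $L(s,q)$ for a given $s$, which follows directly from the explicit nature of the exceptional set together with the fact that a single isomorphism class of Clausen curves at $j=1728$ corresponds to at most a bounded number of $\lambda$.
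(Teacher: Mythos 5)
Your proposal is correct and follows essentially the same route as the paper, which proves this proposition only implicitly by remarking that it follows from ``the discussion above,'' i.e., from the at-most-three-to-one map $F:L(s,q)\to\widetilde{I}(s,q)$ of Lemmas 7.1--7.2 of \cite{fop} together with Schoof's count $|\widetilde{I}(s,q)|=\tfrac12|I(s,q)|\in\{H(4q-s^2),S(p),0\}$. Your explicit handling of the exceptional $\lambda\in\{1/3,-1/9\}$ and the $j=1728$ class via the slack in $\max\{\cdot,\cdot,2\}$ is exactly the role that constant plays.
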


\section{Harmonic Maass forms and weighted sums of Fourier coefficients}\label{HarmonicMaassForms}

In this section we explain how the weighted sums of class numbers in the previous section arise naturally in the theory of harmonic Maass forms (for background, see \cite{BFOR}).
The connection with harmonic Maass forms stems from the following well-known theorem about Zagier's weight 3/2 nonholomorphic Eisenstein series.

\begin{theorem}[\cite{Zagier1}]\label{ZagierSeries}
The function
$$
\mathcal{H}(\tau)=-\frac{1}{12}+\sum\limits_{n=1}^\infty H^\ast(n)q_\tau^n+\frac{1}{8\pi\sqrt{y}}+\frac{1}{4\sqrt{\pi}}\sum\limits_{n=1}^\infty n\Gamma(-\frac{1}{2}; 4\pi n^2y)q^{-n^2},
$$
where $\tau=x+iy\in \HH$ and $q_\tau:=e^{2\pi i\tau},$ is a weight $3/2$ harmonic Maass form with manageable growth at the cusps of $\Gamma_0(4).$ 
\end{theorem}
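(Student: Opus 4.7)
The plan is to construct $\mathcal{H}(\tau)$ as the value at a distinguished point of a one-parameter family of half-integral weight nonholomorphic Eisenstein series and to read off the stated Fourier expansion from the Rankin--Selberg type computation. Concretely, I would start from the weight $3/2$ Eisenstein series on $\Gamma_0(4)$ with respect to the theta multiplier,
\begin{equation*}
E_{3/2}(\tau,s) \;:=\; \sum_{\gamma\in\Gamma_\infty\backslash\Gamma_0(4)} \bar{\epsilon}_\gamma\,(c\tau+d)^{-3/2}\,\bigl(\mathrm{Im}(\gamma\tau)\bigr)^{s},
\end{equation*}
where $\epsilon_\gamma$ is Shimura's theta multiplier. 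This series converges absolutely for $\mathrm{Re}(s)>1/4$ and transforms like a weight $3/2$ form on $\Gamma_0(4)$ by construction. The first step would be to establish meromorphic continuation to $s=0$ using the standard unfolding against a half-integral weight theta series (or equivalently via a Mellin transform interpretation), and to show that at $s=0$ the resulting function is finite.

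The second step is to compute the Fourier expansion at $\infty$ explicitly. Unfolding the non-trivial double coset representatives yields a sum indexed by $(c,d)$ with $4\mid c$, and the inner $d$-sum produces Gauss-type sums. Executing the Mellin-Barnes evaluation on each Fourier coefficient, the positive-frequency coefficient of $q_\tau^n$ at $s=0$ becomes a product of a Dirichlet $L$-value and a finite local factor that collapses, via the classical Kronecker--Hurwitz class number formula (i.e. the Eichler mass formula in this setting), to $H^*(n)$; the constant term yields $-1/12 + 1/(8\pi\sqrt{y})$; and the terms indexed by $-n^2$ with $n\geq 1$ give precisely $\tfrac{n}{4\sqrt{\pi}}\,\Gamma(-1/2;\,4\pi n^2 y)\,q_\tau^{-n^2}$, since the non-holomorphic completion of a weight $3/2$ mock modular form attached to the theta sector takes exactly this shape.

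The third step is to verify the harmonic Maass form axioms. The transformation law on $\Gamma_0(4)$ with multiplier equal to that of $\theta(\tau)^3$ is inherited from the defining Poincaré-type sum. Harmonicity $\Delta_{3/2}\mathcal{H}=0$ follows because $\Delta_{3/2}$ kills $E_{3/2}(\tau,s)$ exactly at $s=0$; alternatively, one checks directly that $\partial_{\bar\tau}\mathcal{H}$ is (up to constants) the unary theta series $\sum_n n\,q_\tau^{n^2}$, which forces $\xi_{3/2}\mathcal{H}$ to be a weight $1/2$ holomorphic modular form and therefore places $\mathcal{H}$ in the harmonic Maass space. For manageable growth at the three cusps $\infty,0,1/2$ of $\Gamma_0(4)$, I would conjugate by the corresponding scaling matrices and apply the same Fourier computation, observing that in each case the principal part is a finite linear combination of $q_\tau^{-n^2}$ times incomplete Gamma factors plus a $y^{-1/2}$ contribution, which is exactly the allowed growth.

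The main obstacle is the Fourier coefficient computation in step two: turning the analytically continued Kloosterman-type sums into the arithmetic object $H^*(n)$. This is the content of Zagier's original calculation and requires the Hurwitz--Kronecker class number relation together with careful bookkeeping of local factors at $2$ (which are responsible for the $\Gamma_0(4)$ level and for the $\omega(\mathcal{O}')$ normalization appearing in the definition of $H^*$). Every other step is either a formal consequence of the Eisenstein series construction or a routine verification of growth and transformation.
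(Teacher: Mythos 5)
The paper does not prove this statement at all: Theorem~\ref{ZagierSeries} is imported verbatim from Zagier's 1975 Comptes Rendus note \cite{Zagier1}, so there is no internal proof to compare against. Your outline is essentially a reconstruction of Zagier's original argument --- Hecke's trick applied to the weight $3/2$ Eisenstein series on $\Gamma_0(4)$ with the theta multiplier, analytic continuation to the harmonic point, and identification of the holomorphic Fourier coefficients with $H^*(n)$ via the Hurwitz--Kronecker class number relations --- so it is the ``same'' approach in the only meaningful sense. Two caveats. First, your identification of the shadow is off: $\partial_{\bar\tau}\mathcal{H}$ is proportional to $y^{-3/2}\,\overline{\theta(\tau)}$ with $\theta(\tau)=\sum_{n\in\Z}q_\tau^{n^2}$, so $\xi_{3/2}\mathcal{H}$ is a constant multiple of the weight $1/2$ Jacobi theta function; the series $\sum_n n\,q_\tau^{n^2}$ you name vanishes identically when summed over $\Z$ and would be a weight $3/2$ object otherwise, so in neither reading is it the weight $1/2$ form your own sentence requires. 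This does not break the argument --- the corrected computation still exhibits $\mathcal{H}$ as harmonic with shadow $\theta$ --- but as written the claim is wrong. Second, as you yourself concede, the entire arithmetic content lives in your step two: converting the analytically continued sums of Gauss sums into $H^*(n)$, including the local bookkeeping at $2$ that produces the level $\Gamma_0(4)$ and the $\omega(\mathcal{O}')$ weights in the definition of $H^*$. That computation \emph{is} the theorem, and your proposal defers it to Zagier rather than carrying it out, so what you have is a faithful plan for the standard proof rather than a self-contained proof.
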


This theorem asserts that the generating function for Hurwitz class numbers \footnote{Here we adopt the convention that $H^*(0):=-1/12.$}  is the {\it holomorphic part} of the harmonic Maass form $\mathcal{H}(\tau).$  More generally
(for example, see Lemma 4.3 of \cite{BFOR}), every weight $k\neq 1$ harmonic weak Maass form $f(\tau)$ has a Fourier expansion of the form
\begin{equation}\label{HMFFourier}
f(\tau)=f^{+}(\tau)+\frac{(4\pi y)^{1-k}}{k-1}\overline{c_f^{-}(0)}+f^{-}(\tau),
\end{equation}
where 
\begin{equation}\label{HMSParts}
f^{+}(\tau)=\sum\limits_{n=m_0}^\infty c_f^{+}(n)q_\tau^n \ \ \
{\text {\rm and}}\ \ \ 
f^{-}(\tau)=\sum\limits_{\substack{n=n_0\\ n\neq 0}}^\infty \overline{c_f^{-}(n)}n^{k-1}\Gamma(1-k;4\pi |n| y)q_\tau^{-n}.
\end{equation}
Here $\Gamma(\alpha;x):=\int_{\alpha}^{\infty}e^{-t}t^{x-1}dt$ is the usual incomplete Gamma-function.
The function $f^{+}(\tau)$ is called the {\it holomorphic part} of $f.$

The weighted sums of class numbers we require appear in formulas for the Fourier coefficients of certain families of nonholomorphic modular forms. These forms are constructed from Zagier's $\mathcal{H}(\tau)$ as a simple implementation of the Rankin-Cohen bracket operators, which are combinatorial expressions in derivatives of pairs of modular forms. This method was previously applied by Mertens \cite{mertens, mertensRMS} in his proof of a deep conjecture of Cohen on the Cohen-Eisenstein series.

\subsection{Combinatorial Interlude}

To carry out the strategy described above, we require a framework of combinatorial identities
for the degree $a-2$
 homogeneous polynomials
\begin{equation}
P_{a,b}(X,Y):=\sum\limits_{j=0}^{a-2}\binom{j+b-2}{j}X^j(X+Y)^{a-j-2},
\end{equation}
where $a\geq 2$ is a positive integer and $b$ is any real number. This framework captures
the nonholomorphic modular forms constructed with the Rankin-Cohen brackets.
The next proposition gives a significant identity for certain complicated algebraic expressions in these
polynomials.

\begin{proposition}\label{ExplicitPiHol-Poly}
If $m>n$ are positive integers, then we have
\begin{displaymath}
\begin{split}
&2^{-2\nu-1}\binom{2\nu+1}{\nu+1}\left(m^{-\frac{1}{2}}(m^{\frac{1}{2}}-n^{\frac{1}{2}})^{2\nu+2}\right)=\\
&\ \ \ \ \ \ \ \ \ \ \ \ \ \sum\limits_{\mu=0}^\nu\binom{\frac{1}{2}+\nu}{\nu-\mu}\binom{\frac{1}{2}+\nu}{\mu}m^{\nu-\mu}\times\left(m^{\mu-2\nu-1/2}P_{3+2\nu,\frac{1}{2}-\mu}(m-n,n)-n^{\frac{1}{2}+\mu}\right).
\end{split}
\end{displaymath}
\end{proposition}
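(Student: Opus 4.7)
The plan is to reduce the two-variable identity to a one-variable polynomial identity by dehomogenization, and then to verify it by recognizing each bracket as the truncation error of a formal binomial expansion.

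Under the substitution $n = mt^2$, direct computation yields $(m^{1/2}-n^{1/2})^{2\nu+2} = m^{\nu+1}(1-t)^{2\nu+2}$, $(m-n)^j m^{2\nu+1-j} = m^{2\nu+1}(1-t^2)^j$, and $n^{1/2+\mu} = m^{1/2+\mu}t^{1+2\mu}$. A common factor of $m^{\nu+1/2}$ then cancels from both sides, reducing the proposition to the single-variable polynomial identity
\begin{equation*}
2^{-2\nu-1}\binom{2\nu+1}{\nu+1}(1-t)^{2\nu+2} = \sum_{\mu=0}^\nu\binom{1/2+\nu}{\nu-\mu}\binom{1/2+\nu}{\mu}\Bigl[\sum_{j=0}^{2\nu+1}\binom{j-3/2-\mu}{j}(1-t^2)^j - t^{1+2\mu}\Bigr]
\end{equation*}
in $\Q[t]$.

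The key observation is that each bracket is the truncation error of a formal binomial series. Indeed, the formal identity $(1-u)^{1/2+\mu} = \sum_{j\geq 0}\binom{j-3/2-\mu}{j}u^j$, combined with the identification $(1-u)^{1/2+\mu} = t^{1+2\mu}$ obtained via $u = 1-t^2$ and the principal branch $t = (1-u)^{1/2}$, gives
\begin{equation*}
\sum_{j=0}^{2\nu+1}\binom{j-3/2-\mu}{j}(1-t^2)^j - t^{1+2\mu} = -\sum_{j\geq 2\nu+2}\binom{j-3/2-\mu}{j}(1-t^2)^j
\end{equation*}
as an identity in $\Q[[u]]$ after the substitution $t = (1-u)^{1/2}$. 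Collecting terms, the RHS becomes
\begin{equation*}
-\sum_{j\geq 2\nu+2}\left(\sum_{\mu=0}^\nu\binom{1/2+\nu}{\nu-\mu}\binom{1/2+\nu}{\mu}\binom{j-3/2-\mu}{j}\right)u^j.
\end{equation*}

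To conclude, one evaluates the inner $\mu$-sum for each $j$ and matches the result to the coefficient of $u^j$ in the expansion of $2^{-2\nu-1}\binom{2\nu+1}{\nu+1}(1-(1-u)^{1/2})^{2\nu+2}$ in $\Q[[u]]$. The product $\binom{1/2+\nu}{\nu-\mu}\binom{1/2+\nu}{\mu}$ has a Vandermonde convolution structure coming from $(1+x)^{1/2+\nu}\cdot(1+x)^{1/2+\nu} = (1+x)^{2\nu+1}$, which suggests recasting the inner sum as a terminating, well-poised $_3F_2$ summable by a Pfaff--Saalsch\"utz-type identity. The main obstacle is executing this hypergeometric simplification in the presence of half-integer parameters and showing that the resulting expression matches the Taylor coefficients of $(1-(1-u)^{1/2})^{2\nu+2}$; a consistency check at $t=\pm 1$ (where both sides collapse via a plain Vandermonde sum $\sum_\mu\binom{1/2+\nu}{\nu-\mu}\binom{1/2+\nu}{\mu} = \binom{2\nu+1}{\nu}$) confirms that the reformulation is correctly set up and may be extended into a proof by induction on $\nu$ should the direct hypergeometric approach prove unwieldy.
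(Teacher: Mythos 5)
Your reduction is set up correctly, and it does organize the computation differently from the paper: you expand $P_{3+2\nu,\frac12-\mu}$ directly from its definition and recognize each bracket as the tail $-\sum_{j\geq 2\nu+2}\binom{j-\frac32-\mu}{j}(1-t^2)^j$ of the binomial series for $(1-u)^{\frac12+\mu}$, whereas the paper re-expands $P_{a,b}$ via Mertens' identity (Lemma~\ref{PabFormula}) in powers of $m$ and $-n$ and compares against the Binomial Theorem applied to $(m^{\frac12}-n^{\frac12})^{2\nu+2}$. The dehomogenization $n=mt^2$ is legitimate because both sides are homogeneous of degree $2\nu+1$ in $(m^{\frac12},n^{\frac12})$, and the passage to formal power series in $u=1-t^2$ with $t=\sqrt{1-u}$ is an injective substitution, so proving the $u$-series identity would indeed suffice.

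However, there is a genuine gap: the argument stops exactly where the content of the proposition lies. What remains to be shown is that for every $j\geq 2\nu+2$,
\[
\sum_{\mu=0}^{\nu}\binom{\nu+\tfrac12}{\nu-\mu}\binom{\nu+\tfrac12}{\mu}\binom{j-\tfrac32-\mu}{j}
=-2^{-2\nu-1}\binom{2\nu+1}{\nu+1}\,[u^j]\bigl(1-\sqrt{1-u}\bigr)^{2\nu+2},
\]
and you neither evaluate the left-hand sum (its term ratio is $\frac{(\mu-\nu)(\mu-\nu-\frac12)}{(\mu+1)(\mu-j+\frac32)}$, so it is actually a terminating ${}_2F_1(1)$ summable by Chu--Vandermonde, not a Pfaff--Saalsch\"utz-type ${}_3F_2$) nor produce the closed form of the Taylor coefficients of $(1-\sqrt{1-u})^{2\nu+2}$ needed on the right. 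This identity carries the same combinatorial weight as the paper's Lemmas~\ref{CombLemma} and~\ref{CombLemma2Equation}, which are precisely the technical core of the paper's proof. The consistency check at $t=\pm1$ supplies only two linear conditions on a polynomial identity with $2\nu+3$ coefficients, so it cannot substitute for the evaluation, and the suggested induction on $\nu$ comes with no inductive step. Until the displayed identity is proved, the proposition is not established.
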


\noindent
This proposition is analogous to Proposition V.2.7 of \cite{mertens}. Moreover, its proof follows along the same lines.
The key lemmas we require are as follows, where binomial  and multinomial coefficients with non-integral arguments are defined using the Gamma-function.

\begin{lemma}\label{CombLemma}
If 
$\nu\geq 1,$ and $j\geq 0$ are integers, then we have
$$
\sum\limits_{\mu=0}^\nu\frac{(-1)^\mu}{\mu-j+\frac{1}{2}}\cdot \binom{4\nu-2\mu+1}{2\nu-\mu,\ \mu,\ 2\nu-2\mu+1}=2^{4\nu+2}(-1)^j\frac{(2\nu-j+1)!j!}{(2j)!(2\nu-2j+2)!}.
$$
\end{lemma}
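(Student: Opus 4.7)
We recognize the sum as a terminating Saalsch\"utzian ${}_3F_2(1)$ and evaluate it via the Pfaff--Saalsch\"utz theorem. Denote the summand by
\[
a_\mu := \frac{(-1)^\mu}{\mu - j + \tfrac{1}{2}} \cdot \frac{(4\nu - 2\mu + 1)!}{(2\nu - \mu)!\, \mu!\, (2\nu - 2\mu + 1)!}.
\]
A direct calculation of the ratio $a_{\mu+1}/a_\mu$, after using $4\nu - 2\mu = 2(2\nu - \mu)$ to cancel a common factor, yields
\[
\frac{a_{\mu+1}}{a_\mu} = \frac{(\mu + \tfrac{1}{2} - j)(\mu - \nu - \tfrac{1}{2})(\mu - \nu)}{(\mu + \tfrac{3}{2} - j)(\mu - 2\nu - \tfrac{1}{2})(\mu + 1)},
\]
which matches the ratio of consecutive terms of a ${}_3F_2(1)$, so
\[
S \;=\; \frac{2}{1 - 2j} \cdot \frac{(4\nu+1)!}{(2\nu)!(2\nu+1)!} \cdot {}_3F_2\!\left(\begin{matrix} \tfrac{1}{2} - j,\ -\tfrac{1}{2} - \nu,\ -\nu \\ \tfrac{3}{2} - j,\ -\tfrac{1}{2} - 2\nu\end{matrix}\,;\, 1\right).
\]

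Setting $a = \tfrac{1}{2} - j$, $b = -\tfrac{1}{2} - \nu$, $-n = -\nu$, $c = \tfrac{3}{2} - j$, $d = -\tfrac{1}{2} - 2\nu$, the Saalsch\"utz balancing condition $c + d = 1 - j - 2\nu = 1 + a + b - n$ is satisfied, so the Pfaff--Saalsch\"utz theorem gives
\[
{}_3F_2 \;=\; \frac{(c-a)_\nu\,(c-b)_\nu}{(c)_\nu\,(c-a-b)_\nu} \;=\; \frac{(1)_\nu\,(2 + \nu - j)_\nu}{(\tfrac{3}{2} - j)_\nu\,(\tfrac{3}{2} + \nu)_\nu}.
\]

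The rest is computation. Using $\Gamma(n+\tfrac{1}{2}) = \sqrt{\pi}\,(2n)!/(4^n n!)$ one finds $(\tfrac{3}{2}+\nu)_\nu = (4\nu+1)!\,\nu!/(4^\nu(2\nu+1)!(2\nu)!)$, while $(2+\nu-j)_\nu$ telescopes to $(2\nu-j+1)!/(\nu-j+1)!$. For $j \geq 1$, the reflection formula $\Gamma(\tfrac{3}{2}-j)\Gamma(j-\tfrac{1}{2}) = (-1)^{j+1}\pi$ yields
\[
(\tfrac{3}{2}-j)_\nu \;=\; (-1)^{j+1}\,\frac{(2\nu-2j+2)!\,(2j-2)!}{4^\nu\,(\nu-j+1)!\,(j-1)!}.
\]
Substituting these four evaluations back into the product, the half-integer Gammas and the factorials $(\nu-j+1)!$ cancel in matched pairs; the assorted powers of $4$ collapse to $4^{2\nu} = 2^{4\nu}$; the prefactor $2/(1-2j)$ contributes one further power of $2$ and, combined with the reflection sign $(-1)^{j+1}$, produces $(-1)^j$; finally the identity $(j-1)!/\bigl((2j-1)(2j-2)!\bigr) = 2\,j!/(2j)!$ reassembles the residual factorials into $2^{4\nu+2}(-1)^j\, j!(2\nu-j+1)!/((2j)!(2\nu-2j+2)!)$, the asserted right-hand side.

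\textbf{Main obstacle.} The expected friction is entirely in the last step: tracking how the powers of $2$ from four separate half-integer Gamma evaluations assemble to exactly $2^{4\nu+2}$, and how the reflection sign combines with $2/(1-2j)$ to give $(-1)^j$. The boundary case $j = 0$ must be treated separately (the reflection formula is inapplicable there, but $(\tfrac{3}{2})_\nu = (2\nu+1)!/(4^\nu \nu!)$ handles it directly), and for $\nu + 2 \leq j \leq 2\nu + 1$ the factor $(2+\nu-j)_\nu$ in the Pfaff--Saalsch\"utz numerator vanishes, matching the vanishing of the right-hand side under the convention $1/n! = 0$ for negative integers $n$.
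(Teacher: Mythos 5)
Your proposal is correct and takes essentially the same route as the paper's (sketched) proof: both rewrite the sum as the identical terminating Saalschützian ${}_3F_2$ at $1$ with prefactor $\frac{1}{\frac{1}{2}-j}\binom{4\nu+1}{2\nu}$ and evaluate it via the Pfaff--Saalschütz theorem, following Mertens' Lemma V.2.6. You have simply written out in full the Gamma-function bookkeeping that the paper delegates to ``applying the same steps.''
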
 
\begin{proof}[Sketch of the Proof] This claim is analogous to Lemma~V.2.6 of \cite{mertens}, which stems from an expression of the form
$$
\sum\limits c_\mu=\frac{1}{-j+\frac{1}{2}}\cdot \binom{4\nu-1}{2\nu}\cdot {_3F_2}\left( \begin{matrix} -\nu & -j+\frac{1}{2} & -\nu+\frac{1}{2}\\
\ & -j+\frac{3}{2} & -2\nu+\frac{1}{2}\end{matrix} \ | \ 1\right).
$$
The claim is obtained by applying the same steps to the following expression with sign changes
$$
\sum\limits c_\mu=\frac{1}{-j+\frac{1}{2}}\cdot \binom{4\nu+1}{2\nu}\cdot {_3F_2}\left( \begin{matrix} -\nu & -j+\frac{1}{2} & -\nu-\frac{1}{2}\\
\ & -j+\frac{3}{2} & -2\nu-\frac{1}{2}\end{matrix} \ | \ 1\right).
$$
\end{proof}

\begin{lemma}\label{CombLemma2Equation}
The following are true:

\noindent
(1) If $\mu\leq\nu$ are nonnegative integers, then we have
$$
\binom{\nu+\frac{1}{2}}{\nu-\mu}\binom{\nu+\frac{1}{2}}{\mu}=2^{-2\nu-1}\binom{2\nu+1}{\nu+1}\binom{2\nu+2}{2\mu+1}.
$$

\noindent
(2)  If $0\leq \mu\leq \nu$ and $j\geq 0$ are integers, then we have
$$
\binom{2\nu-\mu+\frac{1}{2}}{2\nu+1-j}\binom{j-\mu-\frac{3}{2}}{j}=\frac{(-1)^{\mu+1}}{j-\mu-\frac{1}{2}}\cdot 2^{-4\nu-2}\frac{(4\nu-2\mu+1)!(2\mu+1)!}{(2\nu-\mu)! \ \mu ! \ j! \ (2\nu-j+1)!}.
$$
\end{lemma}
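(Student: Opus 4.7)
The plan is to reduce both identities to mechanical manipulations of the Gamma function. The central tools are the Legendre duplication formula in the form
\[
\Gamma(n+3/2) = \frac{(2n+1)!\,\sqrt{\pi}}{2^{2n+1}\,n!} \qquad (n\in\Z_{\geq 0}),
\]
together with the reflection formula $\Gamma(z)\Gamma(1-z) = \pi/\sin(\pi z)$, which at $z = -\mu - 1/2$ gives $\Gamma(-\mu-1/2)\,\Gamma(\mu+3/2) = (-1)^{\mu+1}\pi$, using $\sin(\pi(\mu+3/2)) = (-1)^{\mu+1}$.

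For (1), I would first rewrite each half-integer binomial coefficient via
\[
\binom{\nu+1/2}{k} = \frac{\Gamma(\nu+3/2)}{k!\,\Gamma(\nu-k+3/2)} = \frac{(2\nu+1)!\,(\nu-k)!}{4^k\,\nu!\,k!\,(2\nu-2k+1)!}.
\]
Substituting this with $k=\nu-\mu$ and with $k=\mu$ and multiplying yields
\[
\binom{\nu+1/2}{\nu-\mu}\binom{\nu+1/2}{\mu} = \frac{((2\nu+1)!)^2}{4^\nu\,(\nu!)^2\,(2\mu+1)!\,(2\nu-2\mu+1)!}.
\]
On the other hand, expanding the right-hand side as factorials and using $(2\nu+2)! = (2\nu+2)(2\nu+1)!$ and $(\nu+1)! = (\nu+1)\nu!$ gives
\[
2^{-2\nu-1}\binom{2\nu+1}{\nu+1}\binom{2\nu+2}{2\mu+1} = \frac{((2\nu+1)!)^2}{2^{2\nu}\,(\nu!)^2\,(2\mu+1)!\,(2\nu-2\mu+1)!},
\]
and equality is immediate from $2^{2\nu}=4^\nu$.

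For (2), I would write both binomials in Gamma form,
\[
\binom{2\nu-\mu+1/2}{2\nu+1-j} = \frac{\Gamma(2\nu-\mu+3/2)}{(2\nu+1-j)!\,\Gamma(j-\mu+1/2)},\qquad \binom{j-\mu-3/2}{j} = \frac{\Gamma(j-\mu-1/2)}{j!\,\Gamma(-\mu-1/2)},
\]
and multiply them. Using $\Gamma(j-\mu+1/2) = (j-\mu-1/2)\,\Gamma(j-\mu-1/2)$ cancels the two occurrences of $\Gamma(j-\mu-1/2)$ and produces exactly the factor $1/(j-\mu-1/2)$ on the right. The reflection identity then replaces $\Gamma(-\mu-1/2)$ by $(-1)^{\mu+1}\pi/\Gamma(\mu+3/2)$, and two applications of duplication convert $\Gamma(2\nu-\mu+3/2)$ and $\Gamma(\mu+3/2)$ into factorial expressions; the factors of $\pi$ cancel, and collecting powers of $2$ from the two duplications gives $2^{-(4\nu-2\mu+1)-(2\mu+1)} = 2^{-4\nu-2}$, matching the right-hand side. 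The only real obstacle is bookkeeping: tracking the sign produced by the reflection formula, accounting precisely for the powers of two coming from the two uses of duplication, and observing that in the range $j\leq\mu$ the arguments $j-\mu-1/2$ and $-\mu-1/2$ of the Gamma functions are non-integral half-integers, so no poles appear and the formal Gamma-function manipulations are genuinely valid.
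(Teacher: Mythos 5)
Your proposal is correct, and it takes essentially the same route as the paper: the paper's proof is only a sketch that defers to Mertens' explicit Gamma-function manipulations (duplication and reflection applied to the analogous identities on pp.~60--61 of his thesis), which is exactly the computation you carry out. Your version is in fact more self-contained than the paper's, and the arithmetic checks out: in (1) both sides reduce to $((2\nu+1)!)^2\big/\bigl(4^{\nu}(\nu!)^2(2\mu+1)!(2\nu-2\mu+1)!\bigr)$, and in (2) the cancellation via $\Gamma(j-\mu+\tfrac12)=(j-\mu-\tfrac12)\Gamma(j-\mu-\tfrac12)$, the sign $(-1)^{\mu+1}$ from reflection, and the power $2^{-4\nu-2}$ from the two duplications all come out as you state.
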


\begin{proof}[Sketch of Proof]
To prove (1), we  emulate Mertens' proof (see p. 60 of \cite{mertens}) that
$$
\binom{\nu+\frac{1}{2}}{\nu-\mu}\binom{\nu-\frac{1}{2}}{\mu}=2^{-2\nu}\binom{2\nu}{\nu}\binom{2\nu+1}{2\mu+1}.
$$
He gives explicit steps involving standard properties of the Gamma-function that transform the left-hand side into the right-hand side.
To obtain (1), one applies the same steps to
$$
\binom{\nu+\frac{1}{2}}{\nu-\mu}\binom{\nu+\frac{1}{2}}{\mu}.
$$

To prove (2), we emulate Mertens' proof (see p. 61 of \cite{mertens}) that
$$
\binom{2\nu-\mu-\frac{1}{2}}{2\nu-j}\binom{j-\mu-\frac{3}{2}}{j}=\frac{(-1)^{\mu+1}}{j-\mu-\frac{1}{2}}\cdot 2^{-4\nu}\frac{(4\nu-2\mu-1)!(2\mu+1)!}{(2\nu-\mu-1)! \ \mu ! \ j! \ (2\nu-j)!}.
$$
He gives explicit steps which transform the left-hand side into the right-hand side.
To obtain (2), one applies the same steps to
$$
\binom{2\nu-\mu+\frac{1}{2}}{2\nu+1-j}\binom{j-\mu-\frac{3}{2}}{j}.
$$
\end{proof}

\noindent
Finally, we recall an important identity for the  polynomials $P_{a,b}(X,Y)$ obtained by Mertens.

\begin{lemma}[Lemma V.1.8 of \cite{mertens}]\label{PabFormula}
If $b\neq 1,2,$ then
\begin{equation}\label{IdentityP}
P_{a,b}(X,Y)=\sum\limits_{j=0}^{a-2}\binom{a+b-3}{a-2-j}\binom{j+b-2}{j}(X+Y)^{a-2-j}(-Y)^j.
\end{equation}
\end{lemma}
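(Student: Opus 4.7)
The plan is to reduce the claimed identity to the generalized hockey-stick identity via a direct binomial expansion. First I write $X = (X+Y) - Y$ in the defining formula for $P_{a,b}$ and expand $X^j = \sum_{k=0}^{j}\binom{j}{k}(X+Y)^{j-k}(-Y)^k$. Interchanging the order of summation in $j$ and $k$ yields
$$P_{a,b}(X,Y) = \sum_{k=0}^{a-2}(-Y)^{k}(X+Y)^{a-2-k}\, S_k,\qquad S_k := \sum_{j=k}^{a-2}\binom{j+b-2}{j}\binom{j}{k}.$$
Matching this expansion against the right-hand side of (\ref{IdentityP}) reduces the problem to proving the single combinatorial identity
$$S_k = \binom{k+b-2}{k}\binom{a+b-3}{a-2-k}.$$

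To establish this, I would apply the elementary Gamma-function rearrangement
$$\binom{j+b-2}{j}\binom{j}{k} = \binom{k+b-2}{k}\binom{j+b-2}{j-k},$$
which is verified by expanding both sides as $\frac{\Gamma(j+b-1)}{\Gamma(b-1)\Gamma(k+1)\Gamma(j-k+1)}$. Pulling the factor $\binom{k+b-2}{k}$ out of $S_k$ and substituting $i = j-k$ transforms the remaining sum into $\sum_{i=0}^{a-2-k}\binom{i+k+b-2}{i}$. The inner sum is then evaluated by the generalized hockey-stick identity
$$\sum_{i=0}^{n}\binom{c+i}{i} = \binom{c+n+1}{n},$$
applied with $c = k+b-2$ and $n = a-2-k$, which delivers exactly $\binom{a+b-3}{a-2-k}$. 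The hockey-stick identity itself is immediate from the formal power series identity $(1-z)^{-1}\cdot(1-z)^{-(c+1)} = (1-z)^{-(c+2)}$ by comparing coefficients of $z^n$.

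The chief technical point is that $b$ is an arbitrary real parameter rather than a positive integer (the applications in Proposition~\ref{ExplicitPiHol-Poly} take $b = \frac{1}{2} - \mu$), so the binomial coefficients must be interpreted via the Gamma function throughout. Fortunately, for fixed $a$ and $k$ both auxiliary identities above are polynomial identities in $b$ (equivalently, identities in the formal power series ring $\mathbb{Q}(b)[[X,Y]]$), so the manipulations remain valid; the exclusion $b \neq 1, 2$ in the hypothesis precisely ensures that the denominators $\Gamma(b-1)$ appearing in the normalizations do not vanish. No analytic subtleties arise, and the main obstacle is purely the notational bookkeeping needed to correctly reindex $i = j-k$ and identify the parameter $c = k+b-2$ in the hockey-stick step.
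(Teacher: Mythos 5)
Your proof is correct. Note that the paper does not prove this lemma at all: it is quoted verbatim from Mertens' thesis (Lemma V.1.8), so there is no in-paper argument to compare against. Your route --- expanding $X^j=((X+Y)-Y)^j$, swapping the order of summation, and reducing to the single identity $\sum_{j=k}^{a-2}\binom{j+b-2}{j}\binom{j}{k}=\binom{k+b-2}{k}\binom{a+b-3}{a-2-k}$ via the rearrangement $\binom{j+b-2}{j}\binom{j}{k}=\binom{k+b-2}{k}\binom{j+b-2}{j-k}$ and the hockey-stick identity --- is a clean, self-contained verification, and your observation that everything in sight is a polynomial identity in $b$ (so that Gamma-function manipulations valid on a Zariski-dense set of $b$ suffice) is exactly the right way to handle the non-integral parameter. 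One small quibble: your gloss on the hypothesis is off. The Gamma function never vanishes; at $b=1$ the factor $\Gamma(b-1)=\Gamma(0)$ has a pole (so $1/\Gamma(b-1)=0$), while at $b=2$ one has $\Gamma(1)=1$ and nothing degenerates, and values such as $b=0,-1,\dots$ also put poles in $\Gamma(b-1)$ yet are not excluded. In fact, by your own polynomial-identity argument the identity holds for all real $b$; the restriction $b\neq 1,2$ is simply inherited from Mertens' formulation and plays no role in your proof. This does not affect correctness.
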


Using these lemma above, we are now able to prove Proposition~\ref{ExplicitPiHol-Poly}.

\begin{proof}[Proof of  Proposition~\ref{ExplicitPiHol-Poly}]
To prove the proposition, we begin with the right-hand side of the claimed formula.
To start, we absorb the powers of $m$ by
\begin{displaymath}
\begin{split}
&\sum\limits_{\mu=0}^\nu\binom{\frac{1}{2}+\nu}{\nu-\mu}\binom{\frac{1}{2}+\nu}{\mu}m^{\nu-\mu}\times\left(m^{\mu-2\nu-\frac{1}{2}}P_{3+2\nu,\frac{1}{2}-\mu}(r,n)-n^{\frac{1}{2}+\mu}\right)\\ 
&\ \ \ \ \ \ \ \ \ \ \ \ =\sum\limits_{\mu=0}^\nu\binom{\nu +\frac{1}{2}}{\nu-\mu}\binom{\nu+\frac{1}{2}}{\mu}\left(m^{-\nu-\frac{1}{2}}P_{3+2\nu,\frac{1}{2}-\mu}(r,n)-n^{\frac{1}{2}+\mu}m^{\nu-\mu}\right).
\end{split}
\end{displaymath}
By combining Lemma~\ref{CombLemma2Equation} (1) with Lemma~\ref{PabFormula},  one obtains
$$
=2^{-2\nu-1}\binom{2\nu+2}{\nu+1}\left(\sum\limits_{\mu=0}^{\nu}\sum\limits_{j=0}^{2\nu+1}m^{\nu-j+\frac{1}{2}}\binom{2\nu-\mu+\frac{1}{2}}{2\nu+1-j}\binom{j-\mu-\frac{3}{2}}{j}(-n)^j-\sum\limits_{\mu=0}^\nu\binom{2\nu+2}{2\mu+1}n^{\frac{1}{2}+\mu}m^{\nu-\mu}\right).
$$
By the Binomial Theorem, we note that the left-hand side of the claim is
$$
m^{-\frac{1}{2}}(m^{\frac{1}{2}}-n^{\frac{1}{2}})^{2\nu+2}=\sum\limits_{\mu=0}^{\nu+1}\binom{2\nu+2}{2\mu}m^{\nu-\mu+\frac{1}{2}}n^{\mu}-\sum\limits_{\mu=0}^\nu\binom{2\nu+2}{2\mu+1}n^{\mu+\frac{1}{2}}m^{\nu-\mu}.
$$
Therefore, it suffices to show that
$$
\sum\limits_{\mu=0}^{\nu}\sum\limits_{j=0}^{2\nu+1}m^{\nu-j+\frac{1}{2}}\binom{2\nu-\mu+\frac{1}{2}}{2\nu+1-j}\binom{j-\mu-\frac{3}{2}}{j}(-n)^j=\sum\limits_{\mu=0}^\nu\binom{2\nu+2}{2\mu}m^{\nu-\mu+\frac{1}{2}}n^\mu.
$$
Lemma~\ref{CombLemma2Equation} (2), followed by an application Lemma~\ref{CombLemma}, implies this equality.
\end{proof}

\subsection{Families of modular forms obtained from Rankin-Cohen brackets}

As alluded to earlier, the weighted sums of class numbers we require arise in formulas for the coefficients of
certain  families of non-holomorphic modular forms. These families are obtained from Zagier's $\mathcal{H}(\tau)$ by making use of Rankin-Cohen brackets. In this section we recall several important facts about the nonholomorphic modular forms
obtained by this method, along with their holomorphic modular form images under the process of holomorphic projection.

To make this precise, let
 $f$ and $g$ be smooth functions defined on the upper-half  of the complex plane $\HH$, and let $k, l\in\R_{>0}$ and $\nu\in\N_0.$ The $\nu$th {\it Rankin-Cohen bracket} of $f$ and $g$ is 
\begin{equation}\label{RankinCohenBracket}
[f,g]_\nu:=\frac{1}{(2\pi i)^\nu}\sum\limits_{r+s=\nu}(-1)^r\binom{k+\nu-1}{s}\binom{l+\nu-1}{r}\frac{d^r}{d\tau^r}f\cdot\frac{d^s}{d\tau^s}g.
\end{equation}
As the next proposition illustrates, these operators preserve modularity.

\begin{proposition}[Th. 7.1 of \cite{cohen}]\label{BracketProposition}
Let $f$ and $g$ be (not necessarily holomorphic) modular forms of weights $k$ and $l,$ respectively on a congruence subgroup $\Gamma.$ Then the following are true.

\noindent
(1) We have that $[f,g]_\nu$ is modular of weight $k+l+2\nu$ on $\Gamma.$ 

\noindent
(2) If $\gamma\in SL_2(\R),$ then
under the usual modular slash operator we have
$$
[f|_k\gamma,g|_l\gamma]_\nu=([f,g]_\nu)|_{k+l+2\nu}\gamma.
$$
\end{proposition}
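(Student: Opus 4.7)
The plan is to reduce (1) to (2): if $\gamma\in\Gamma$ stabilizes $f$ and $g$ under their respective slash actions, then (2) immediately yields $[f,g]_\nu=[f|_k\gamma,g|_l\gamma]_\nu=([f,g]_\nu)|_{k+l+2\nu}\gamma$, which is (1). All of the content is therefore in the covariance statement (2), which one proves for an arbitrary $\gamma=\bigl(\begin{smallmatrix}a&b\\c&d\end{smallmatrix}\bigr)\in SL_2(\R)$ with no modularity assumption on $\gamma$.

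The first approach I would try is direct computation. Using $\frac{d}{d\tau}(\gamma\tau)=(c\tau+d)^{-2}$, the Leibniz rule, and induction on $r$, one derives the Fa\`a di Bruno-type identity
\begin{equation*}
\frac{d^r}{d\tau^r}(f|_k\gamma)(\tau)=\sum_{j=0}^{r}(-1)^{r-j}\binom{r}{j}\frac{(k+j)_{r-j}\,c^{r-j}}{(c\tau+d)^{k+r+j}}\,f^{(j)}(\gamma\tau),
\end{equation*}
where $(x)_n:=x(x+1)\cdots(x+n-1)$ is the Pochhammer symbol. Plugging this identity and its analogue for $g|_l\gamma$ into the definition \eqref{RankinCohenBracket} produces a quadruple sum over $(r,s,j_1,j_2)$ with $r+s=\nu$ of products $f^{(j_1)}(\gamma\tau)g^{(j_2)}(\gamma\tau)$, weighted by powers of $c$ and $(c\tau+d)$. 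On the other hand, the target $([f,g]_\nu)|_{k+l+2\nu}\gamma(\tau)=(c\tau+d)^{-(k+l+2\nu)}[f,g]_\nu(\gamma\tau)$ contains only the ``diagonal'' contributions with $j_1+j_2=\nu$ and no explicit factor of $c$.

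The main obstacle is the combinatorial identity that forces all off-diagonal terms (those with $j_1+j_2<\nu$, which carry the spurious powers of $c$) to cancel, and that makes the residual diagonal contribution collapse to the Rankin--Cohen coefficient $(-1)^{j_2}\binom{k+\nu-1}{j_2}\binom{l+\nu-1}{j_1}$. This cancellation is essentially a Chu--Vandermonde-type identity on binomial coefficients and Pochhammer symbols; the novelty here is not the identity itself but its bookkeeping across the indices $r,s,j_1,j_2$. If the direct sum becomes unwieldy, I would fall back on the more conceptual Cohen--Kuznetsov lift
\begin{equation*}
\tilde f(\tau,X):=\sum_{n\geq 0}\frac{1}{n!\,(k)_n}\Bigl(\frac{1}{2\pi i}\frac{d}{d\tau}\Bigr)^{\!n}\!f(\tau)\cdot X^n,
\end{equation*}
whose transformation under $\gamma$ differs from that of a weight-$k$ form by an exponential factor in $cX/(c\tau+d)$. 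Multiplying $\tilde f(\tau,X)$ by $\tilde g(\tau,-X)$ cancels these exponential factors, so the product transforms cleanly with weight $k+l$ in $\tau$ while $X$ scales by $(c\tau+d)^{-2}$; extracting the coefficient of $X^\nu$ then recovers $[f,g]_\nu$ up to a known normalization and delivers (2) in one stroke. Either route avoids any genuine use of holomorphy, which is essential since the proposition is needed for the (nonholomorphic) Maass form $\mathcal H(\tau)$.
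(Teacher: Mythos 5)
The paper does not prove this proposition; it is imported verbatim as Theorem 7.1 of Cohen's paper \cite{cohen}, so there is no internal proof to compare against. Your argument is the standard one (and essentially Cohen's/Zagier's): the reduction of (1) to (2) is immediate, your formula
$\frac{d^r}{d\tau^r}(f|_k\gamma)=\sum_{j=0}^{r}(-1)^{r-j}\binom{r}{j}(k+j)_{r-j}\,c^{r-j}(c\tau+d)^{-k-r-j}f^{(j)}(\gamma\tau)$
checks out by induction, the diagonal terms $(r,s)=(j_1,j_2)$ reassemble exactly into $(c\tau+d)^{-(k+l+2\nu)}[f,g]_\nu(\gamma\tau)$, and the off-diagonal cancellation is the usual Chu--Vandermonde bookkeeping; the Cohen--Kuznetsov generating-function route is a legitimate shortcut and is valid here since $k,l\in\{1/2,3/2\}$ keep the Pochhammer symbols $(k)_n$ nonzero. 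One point you wave at but should make explicit, since the whole reason the paper needs this statement is to apply it to the nonholomorphic $\mathcal H(\tau)$: for smooth $f$ the symbol $\frac{d}{d\tau}$ must be read as the Wirtinger derivative $\partial_\tau=\tfrac12(\partial_x-i\partial_y)$, and your chain-rule computation survives because $\gamma\tau$ is holomorphic while $\overline{\gamma\tau}$ depends only on $\bar\tau$, so $\partial_\tau\overline{\gamma\tau}=0$ and the inductive formula is unchanged. With that reading (and noting that half-integral-weight multiplier systems are $\tau$-independent constants that pass through all derivatives), your proof is complete and correct.
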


\begin{remark}
Proposition~\ref{BracketProposition} (2)  is important for studying the behavior of Rankin-Cohen brackets at cusps. It shows that if $f$ and $g$ are smooth functions that do not blow up at any cusp, and $[f,g]_\nu$ vanishes at the cusp $i\infty,$ then it vanishes at all other cusps for $\nu>0.$
\end{remark}

By Proposition~\ref{BracketProposition}, we have a procedure for producing many nonholomorphic modular forms from derivatives of a pair of seed forms $f$ and $g$. We shall study forms that arise in this way from $f(\tau):=\mathcal{H}(\tau)$ and certain univariate theta functions for $g(\tau).$ To prove our results, we make use of canonical holomorphic modular forms that have coefficients with the same asymptotic properties as $[f,g]_{\nu}.$ These forms are obtained by the method of holomorphic projection.

To make this precise,
suppose $f:\HH\rightarrow\C$ is a (not necessarily holomorphic) modular form of weight $k\geq 2$ on a congruence subgroup $\Gamma$ with Fourier expansion
$$
f(\tau)=\sum_{n\in\Z}c_f(n,y)q_{\tau}^n,
$$
where $\tau=x+iy.$ Let $\{\kappa_1,\ldots, \kappa_M\}$ be the cusps of $\Gamma,$ where  $\kappa_1:=i\infty.$ Moreover, for each $j$ let $\gamma_j\in \SL_2(\Z)$ satisfy
$\gamma_j\kappa_j=i\infty.$ Then suppose the following are true.

\noindent
(1) There is an $\varepsilon>0$ and a constant $c_0^{(j)}\in\C$ for which
$$
f\left(\gamma_j^{-1}w\right)\left(\frac{d\tau}{dw}\right)^{k/2}=c_0^{(j)}+O(\im(w))^{-\varepsilon},
$$
for all $j=1,\ldots,M$ and $w=\gamma_j\tau.$

\noindent
(2) For all $n>0,$ we have that $c_f(n,y)=O(y^{2-k})$ as $y\rightarrow0.$
Then, the {\it holomorphic projection of $f$} is defined by
\begin{equation}
(\pi_{\text{hol}}f)(\tau):=c_0+\sum\limits_{n=1}^{\infty}c(n)q_{\tau}^n,
\end{equation}
where $c_0=c_0^{(1)}$ and for $n\geq1$
$$
c(n)=\frac{(4\pi n)^{k-1}}{(k-2)!}\int_{0}^{\infty}c_f(n,y)e^{-4\pi ny}y^{k-2}dy.
$$
The following proposition explains the important role of the projection operator.
\begin{proposition}[Prop. 10.2 of \cite{BFOR}]\label{HolProjProp}
Assuming the hypotheses above, if $k>2$ (resp. $k=2$), then $\pi_{\text{hol}}(f)$ is a weight $k$ holomorphic modular form (resp. weight $2$ quasimodular form) on $\Gamma.$
\end{proposition}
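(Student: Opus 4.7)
The plan is to characterize $\pi_{\text{hol}}(f)$ abstractly via the Petersson inner product and then identify its Fourier coefficients with the explicit integral in the definition. The starting observation is that, when $k>2$, the space of holomorphic cusp forms of weight $k$ on $\Gamma$ embeds isometrically into the larger Hilbert space of square-integrable functions on $\Gamma\backslash\HH$ transforming with weight $k$. The hypotheses (1) and (2) are exactly what is needed to make the Petersson pairing $\langle f,h\rangle$ converge for every weight $k$ cusp form $h$: (1) guarantees no growth at the cusps and (2) controls the behavior as $y\to 0$ of each Fourier mode. Thus $h\mapsto\langle f,h\rangle$ is a continuous linear functional on the finite dimensional space of cusp forms, and by the Riesz representation theorem there is a unique cusp form $F$ with $\langle F,h\rangle=\langle f,h\rangle$ for all $h$.

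The next step is to unfold. I would take $h$ to be the $n$th holomorphic Poincaré series $P_{n,k}$ of weight $k$ on $\Gamma$ (which is a cusp form for $n\geq 1$ when $k>2$) and use the fundamental reproducing property
\begin{equation*}
\langle g,P_{n,k}\rangle=\frac{(k-2)!}{(4\pi n)^{k-1}}\,a_g(n)
\end{equation*}
valid for holomorphic cusp forms $g=\sum a_g(n)q_\tau^n$. Applied to $F$ this shows that $F$ has $n$th Fourier coefficient $c(n)=\tfrac{(4\pi n)^{k-1}}{(k-2)!}\langle f,P_{n,k}\rangle$. On the other hand, writing $P_{n,k}$ as an average over $\Gamma_\infty\backslash\Gamma$ and unfolding the inner product against this average produces an integral over a fundamental domain for $\Gamma_\infty$, namely a vertical strip. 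Inserting the Fourier expansion $f=\sum c_f(n,y)q_\tau^n$ kills all but the $n$th Fourier mode because of the $e^{-2\pi i n\bar\tau}$ in $\overline{P_{n,k}}$, and the $x$-integration on the strip leaves exactly
\begin{equation*}
\int_0^{\infty}c_f(n,y)e^{-4\pi n y}y^{k-2}\,dy,
\end{equation*}
matching the definition of $c(n)$. To handle the constant term, one repeats the analysis with the weight $k$ Eisenstein series in place of the cusp Poincaré series, and the constants $c_0^{(j)}$ of hypothesis (1) select the correct Eisenstein combination, producing the claimed constant $c_0$. This finishes the proof that $\pi_{\text{hol}}(f)$ equals a genuine weight $k$ holomorphic modular form on $\Gamma$.

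The main obstacle is the weight $k=2$ case, where both the Poincaré and Eisenstein series fail to converge absolutely and the Petersson pairing is ill-defined. The fix is Hecke's regularization trick: introduce an auxiliary parameter $s$ by inserting a factor $y^s$ and work in the region $\re(s)>0$ where everything converges, then meromorphically continue to $s=0$. The nonholomorphic Eisenstein term carries a residual contribution proportional to $E_2$, which is exactly the quasimodular correction needed. Verifying that this continuation behaves well and that the resulting coefficient formula for $c(n)$ is identical to the $k>2$ case is the delicate technical step, but it is a standard application of the Rankin–Selberg method rather than something genuinely new. Together with an appeal to Proposition~\ref{BracketProposition} and the remark following it (to ensure vanishing at the other cusps is inherited), this establishes the modularity, or quasimodularity, asserted in the proposition.
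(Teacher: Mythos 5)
The paper offers no proof of this proposition; it is quoted verbatim from Proposition 10.2 of the cited reference, whose proof is precisely the Poincar\'e-series unfolding argument you give. Your reconstruction is essentially correct and matches that source: Riesz representation on the finite-dimensional space of cusp forms, the Petersson coefficient formula $\langle g,P_{n,k}\rangle=\tfrac{(k-2)!}{(4\pi n)^{k-1}}a_g(n)$, unfolding to isolate the $n$th Fourier mode and produce $\int_0^\infty c_f(n,y)e^{-4\pi ny}y^{k-2}\,dy$, and Hecke's $y^s$-regularization for $k=2$ with the $E_2$ quasimodular correction. The one imprecise step is your treatment of the constant term: one does not pair $f$ against the Eisenstein series (that pairing diverges and would require its own regularization); rather, one subtracts the combination $E=\sum_j c_0^{(j)}E_{k,\kappa_j}$ of weight-$k$ Eisenstein series matching the constants of hypothesis (1), so that $f-E$ decays at all cusps, applies the cuspidal argument to $f-E$, and then adds $E$ back, using $\int_0^\infty e^{-4\pi ny}y^{k-2}\,dy=\tfrac{(k-2)!}{(4\pi n)^{k-1}}$ to see that the coefficient formula is unaffected. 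Also note that hypothesis (2) is needed for the convergence of the coefficient integrals and to justify interchanging sum and integral in the unfolding, not for the convergence of the Petersson pairing itself, which is already guaranteed by (1) together with the exponential decay of cusp forms. With that adjustment your argument is complete.
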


Turning to the setting we consider, suppose that $f$ is a harmonic Maass form of weight $k\in \frac{1}{2}\Z$ on $\Gamma_0(N)$ with manageable growth at the cusps, and that $g$ is a holomorphic modular form of weight $l$ on $\Gamma_0(N).$ 
Moreover, suppose that $[f,g]_{\nu}$ satisfies the hypothesis in the definition of holomorphic projection. By additivity,  the holomorphic modular form obtained by Proposition~\ref{HolProjProp} has the following convenient decomposition
\begin{equation}\label{HolomorphicProjectionDecomposition}
\pi_{\text{hol}}([f,g]_\nu)=[f^{+},g]_\nu+\frac{(4\pi)^{1-k}}{k-1}\overline{c_f^{-}(0)}\pi_{\text{hol}}([y^{1-k},g]_\nu)+\pi_{\text{hol}}([f^{-},g]_\nu).
\end{equation}

For our applications, the weighted class number sums will arise from the first summand $[f^{+},g]_\nu$ of 
(\ref{HolomorphicProjectionDecomposition}), when $g(\tau)$ is a univariate theta function, and
$f(\tau)=\mathcal{H}(\tau).$
This term $[\mathcal{H},g]_\nu$ clearly involves weighted sums of class numbers via
Theorem~\ref{ZagierSeries}.

The other two summands in (\ref{HolomorphicProjectionDecomposition}) must be bounded for our applications.
The next lemma offers a closed formula for the Fourier expansion of the middle term.

 \begin{lemma}[Lemma V.1.4 of \cite{mertens}]\label{HolProjExplicit1}
Assuming the hypotheses above, if $g(\tau)$ has Fourier series
$g(\tau)=\sum_{n=0}^{\infty}a_g(n)q_{\tau}^n,$
 then we have
$$
\frac{(4\pi)^{1-k}}{k-1}\pi_{\text{hol}}([y^{1-k},g]_\nu)=\kappa(k,l,\nu)\cdot \sum\limits_{n=0}^\infty n^{k+\nu-1}a_g(n)q_\tau^n,
$$
where
$$
\kappa(k,l,\nu):=\frac{1}{(k+l+2\nu-2)!(k-1)}\sum\limits_{\mu=0}^\nu\left(\frac{\Gamma(2-k)\Gamma(l+2\nu-\mu)}{\Gamma(2-k-\mu)}\binom{k+\nu-1}{\nu-\mu}\binom{l+\nu-1}{\mu}\right).
$$
\end{lemma}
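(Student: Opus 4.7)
The plan is to compute the Fourier expansion of $[y^{1-k},g]_\nu$ explicitly, then evaluate the Mellin-type integrals in the definition of $\pi_{\text{hol}}$ term by term, and finally reorganize the constants to match the stated formula for $\kappa(k,l,\nu)$. The first step is to differentiate $y^{1-k}$ in $\tau$: since $y=(\tau-\bar\tau)/2i$ we have $\partial_\tau y = 1/(2i)$, so iterating gives
\begin{equation*}
\frac{d^r}{d\tau^r}y^{1-k}=\frac{\Gamma(2-k)}{\Gamma(2-k-r)}\left(\frac{1}{2i}\right)^r y^{1-k-r}.
\end{equation*}
On the other hand, termwise differentiation of $g(\tau)=\sum_n a_g(n)q_\tau^n$ gives $\frac{d^s}{d\tau^s}g=\sum_n(2\pi i n)^s a_g(n)q_\tau^n$. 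Substituting into the definition of the Rankin-Cohen bracket and re-indexing via $\mu=r$, $s=\nu-\mu$, the powers of $i$ and $2\pi$ collapse through the identity $(-1)^\mu/((2\pi i)^\mu(2i)^\mu)=1/(4\pi)^\mu$, yielding
\begin{equation*}
[y^{1-k},g]_\nu=\sum_{n\ge 0}a_g(n)q_\tau^n\sum_{\mu=0}^{\nu}\binom{k+\nu-1}{\nu-\mu}\binom{l+\nu-1}{\mu}\frac{\Gamma(2-k)}{\Gamma(2-k-\mu)}\cdot\frac{n^{\nu-\mu}}{(4\pi)^\mu}\cdot y^{1-k-\mu}.
\end{equation*}

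Next I apply the holomorphic projection formula at weight $k+l+2\nu$, reading off $c_f(n,y)$ from the display above. The only $y$-dependence is through the factor $y^{1-k-\mu}$, so each $\mu$-summand contributes an integral
\begin{equation*}
\int_0^\infty y^{1-k-\mu}\cdot e^{-4\pi ny}\cdot y^{k+l+2\nu-2}\,dy=\int_0^\infty y^{l+2\nu-\mu-1}e^{-4\pi ny}\,dy=\frac{\Gamma(l+2\nu-\mu)}{(4\pi n)^{l+2\nu-\mu}}.
\end{equation*}
Multiplying by the prefactor $(4\pi n)^{k+l+2\nu-1}/(k+l+2\nu-2)!$ and tracking the exponents, the powers of $n$ collapse to $n^{k+\nu-1}$ and the powers of $4\pi$ collapse to $(4\pi)^{k-1}$, uniformly in $\mu$. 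This gives
\begin{equation*}
\pi_{\text{hol}}([y^{1-k},g]_\nu)=\frac{(4\pi)^{k-1}}{(k+l+2\nu-2)!}\sum_{n\ge 1}n^{k+\nu-1}a_g(n)q_\tau^n\sum_{\mu=0}^{\nu}\frac{\Gamma(2-k)\Gamma(l+2\nu-\mu)}{\Gamma(2-k-\mu)}\binom{k+\nu-1}{\nu-\mu}\binom{l+\nu-1}{\mu}.
\end{equation*}

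Multiplying through by $(4\pi)^{1-k}/(k-1)$ cancels the $(4\pi)^{k-1}$ and introduces the $1/(k-1)$ into $\kappa(k,l,\nu)$, matching the claim. Before invoking this, I should verify the growth hypotheses that justify the use of $\pi_{\text{hol}}$: the factor $y^{1-k-\mu}$ combined with the weight $k+l+2\nu$ yields the requisite decay at all cusps provided $g$ is holomorphic and of sufficiently positive weight, exactly as in Lemma V.1.4 of \cite{mertens}; this follows from the Rankin-Cohen transformation law in Proposition~\ref{BracketProposition}(2) together with the boundedness of $y^{1-k}$ under the slash action away from $i\infty$. The main difficulty is purely bookkeeping: correctly tracking the signs, the powers of $i$, and the reduction of three separate Pochhammer products into the single $\Gamma(2-k)/\Gamma(2-k-\mu)$ ratio; once these are aligned, the identification with $\kappa(k,l,\nu)$ is immediate from the integral evaluation above.
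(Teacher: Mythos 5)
Your computation is correct: the differentiation of $y^{1-k}$ and of $g$, the collapse of $(-1)^\mu/((2\pi i)^\mu(2i)^\mu)$ to $(4\pi)^{-\mu}$, the evaluation $\int_0^\infty y^{l+2\nu-\mu-1}e^{-4\pi ny}\,dy=\Gamma(l+2\nu-\mu)/(4\pi n)^{l+2\nu-\mu}$, and the $\mu$-independent exponents $n^{k+\nu-1}$ and $(4\pi)^{k-1}$ all check out against the paper's conventions for the bracket and for $\pi_{\text{hol}}$. The paper itself gives no proof (it cites Lemma V.1.4 of Mertens' thesis), and your argument is precisely the standard direct computation carried out there, so nothing further is needed.
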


Finally, the last term in (\ref{HolomorphicProjectionDecomposition})
 can be bounded thanks to the following theorem of Mertens that offers a closed
formula 
in terms of the Fourier coefficients of $f$ and $g.$
\begin{theorem}[Th. V.1.5 of \cite{mertens}]\label{HolProjExplicit2}
If  $c_f^{-}(n)$ and $a_g(n)$ are bounded polynomially, then we have
$\pi_{\text{hol}}([f^{-},g]_\nu=\sum\limits_{r=1}^\infty b(r)q_\tau^{r},$
where 

\begin{displaymath}
\begin{split}
b(r)=-\Gamma(1-k)&\sum\limits_{m-n=r}a_g(m)\overline{c^{-}_f(n)}\sum\limits_{\mu=0}^\nu\binom{k+\nu-1}{\nu-\mu}\binom{l+\nu-1}{\mu}m^{\nu-\mu}\\ 
&\ \ \ \ \ \ \ \ \ \ \ \ \ \ \ \ \ \ \ \ \times \left(m^{\mu-2\nu-l+1}P_{k+l+2\nu,2-k-\mu}(r,n)-n^{k+\mu-1}\right),
\end{split}
\end{displaymath}

where the sum runs over positive integers $m$ and $n.$
\end{theorem}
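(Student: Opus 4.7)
The plan is to expand $[f^-,g]_\nu$ termwise using the definition of the Rankin--Cohen bracket, apply the integral representation of holomorphic projection to each term, and then use the combinatorial identity in Proposition~\ref{ExplicitPiHol-Poly} to collapse the resulting double sum into the closed form involving $P_{k+l+2\nu,2-k-\mu}(r,n)$.

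First I would write
\[
f^-(\tau)=\sum_{n\neq 0}\overline{c_f^-(n)}\,n^{k-1}\,\Gamma(1-k;4\pi|n|y)\,e^{-2\pi i n\tau},\qquad g(\tau)=\sum_{m\geq 0}a_g(m)q_\tau^m,
\]
and compute the derivatives appearing in the Rankin--Cohen bracket. For $g$, the $s$-th $\tau$-derivative simply multiplies $a_g(m)$ by $(2\pi i m)^s$. For $f^-$, I use the chain rule together with the identity $\frac{d}{dy}\Gamma(1-k;4\pi|n|y)=-(4\pi|n|)^{1-k}e^{-4\pi|n|y}y^{-k}$ to express the $r$-th derivative of each summand as a polynomial in $y^{-1}$ times $\Gamma(1-k;4\pi|n|y)e^{-2\pi in\tau}$, plus lower-order exponential terms coming from the boundary contributions of the incomplete Gamma function. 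Collecting the coefficient of $q_\tau^{r}$ in $[f^-,g]_\nu$ gives, for each positive integer $r$, a sum over pairs $(m,n)$ with $m-n=r$ of an explicit function of $m,n,y$ weighted by $a_g(m)\overline{c_f^-(n)}$.

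Next I would invoke the holomorphic projection formula, namely that the $r$-th Fourier coefficient of $\pi_{\text{hol}}([f^-,g]_\nu)$ equals
\[
\frac{(4\pi r)^{k+l+2\nu-1}}{(k+l+2\nu-2)!}\int_0^\infty \bigl(r\text{-th coefficient of }[f^-,g]_\nu\text{ at level }y\bigr)\,e^{-4\pi r y}\,y^{k+l+2\nu-2}\,dy.
\]
After interchanging summation and integration (justified by the assumed polynomial bounds on $a_g$ and $c_f^-$), each integral reduces to an evaluation of the form $\int_0^\infty \Gamma(1-k;4\pi ny)e^{-4\pi my}y^{k+l+2\nu-2}\,dy$, which one handles via integration by parts (or the standard identity $\int_0^\infty \Gamma(\alpha;ay)e^{-by}y^{s-1}dy=\Gamma(\alpha+s)/(s\cdot b^s)\cdot {}_2F_1$-type expressions) and produces a rational combination of powers of $m^{1/2}$ and $n^{1/2}$. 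The resulting expression for $b(r)$ becomes a double sum over $\mu$ (from the Rankin--Cohen binomial weights) and a polynomial combination in $m,n$.

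The main obstacle, and the decisive step, will be recognizing that this combinatorial double sum matches the right-hand side of Proposition~\ref{ExplicitPiHol-Poly}, which then rewrites everything in terms of $m^{\mu-2\nu-l+1}P_{k+l+2\nu,2-k-\mu}(r,n)-n^{k+\mu-1}$. Concretely, one must align the parameters appearing in the integral evaluation with the factor $2^{-2\nu-1}\binom{2\nu+1}{\nu+1}m^{-1/2}(m^{1/2}-n^{1/2})^{2\nu+2}$ after the substitutions $k\mapsto $ half-integral weight and $l\mapsto $ weight of the theta factor. The Gamma-function prefactor $-\Gamma(1-k)$ arises from the reciprocal of $(4\pi)^{1-k}/(k-1)$ combined with the normalization of $\Gamma(1-k;\cdot)$. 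Once this identification is made, grouping the $\mu$-sum by Lemmas~\ref{CombLemma} and \ref{CombLemma2Equation} and applying Lemma~\ref{PabFormula} produces the claimed formula for $b(r)$. The polynomial bounds on $c_f^-(n)$ and $a_g(m)$ ensure that the hypotheses for applying Proposition~\ref{HolProjProp} are met, so the object we have computed is genuinely the holomorphic projection.
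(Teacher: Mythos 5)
The paper itself offers no proof of this statement: it is imported verbatim as Theorem V.1.5 of \cite{mertens}, so the only meaningful comparison is with Mertens' argument and with internal consistency. Your skeleton --- expand $[f^{-},g]_\nu$ termwise via (\ref{RankinCohenBracket}), push each term through the holomorphic-projection integral, and interchange sum and integral using the polynomial bounds on $a_g(n)$ and $c_f^{-}(n)$ --- is indeed the correct and standard route, and it is how Mertens proceeds.

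The genuine gap is in what you call the decisive step. The entire content of the theorem is the exact evaluation of the integrals
$\int_0^\infty \Gamma(1-k;4\pi n y)\,e^{-4\pi m y}\,y^{s-1}\,dy$,
which must be shown to yield precisely $m^{\mu-2\nu-l+1}P_{k+l+2\nu,2-k-\mu}(r,n)-n^{k+\mu-1}$ (up to the stated normalization). Here $P_{a,b}(r,n)=\sum_{j}\binom{j+b-2}{j}r^{j}m^{a-2-j}$ with $m=r+n$ is the truncation of the binomial series for $(1-r/m)^{-(b-1)}$, i.e.\ of a terminating ${}_2F_1$ coming out of the incomplete-Gamma integral, and the term $-n^{k+\mu-1}$ is the corresponding tail. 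Your claim that the integral reduces to ``a rational combination of powers of $m^{1/2}$ and $n^{1/2}$'' is false for general weights $k,l$; half-integral powers appear only in the special case $k=l=3/2$. More seriously, your plan to close the argument by matching against Proposition~\ref{ExplicitPiHol-Poly} is backwards: that proposition is exactly the $k=l=3/2$ specialization (note the exponent $\mu-2\nu-\tfrac12$ and the subscripts $3+2\nu$, $\tfrac12-\mu$), and the paper applies it \emph{after} Theorem~\ref{HolProjExplicit2} to simplify its output for the forms $[\mathcal{H},\eta(8\tau)^3]_\nu$. It cannot produce the general-$(k,l)$ formula, and Lemmas~\ref{CombLemma}, \ref{CombLemma2Equation}, and \ref{PabFormula} belong to the proof of that proposition, not of this theorem. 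To repair the argument you would need to carry out the incomplete-Gamma integral evaluation directly (e.g.\ via the identity expressing $\int_0^\infty x^{\mu-1}e^{-\beta x}\Gamma(\nu,\alpha x)\,dx$ as a ${}_2F_1$ in $\beta/(\alpha+\beta)$) and track how the truncated series assembles into $P_{k+l+2\nu,2-k-\mu}$; as written, the step that produces the polynomial $P_{a,b}$ is unsupported.
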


\begin{remark}
We shall apply Proposition~\ref{ExplicitPiHol-Poly}, the main objective of the previous subsection,  to the formulas in the theorem above. This application yields convenient formulas for the Fourier expansions of important modular forms (for example, see (\ref{ExplicitPiHol-Poly})) constructed in the next section.
\end{remark}

\section{Bounds for weighted sums of class numbers}\label{WeightedSumsOfClass}

Here we assemble the required asymptotics for the weighted sums of class numbers that lead to the proofs
of Theorems~\ref{Theorem2F1} and ~\ref{Theorem3F2}. The proofs of these asymptotics rely on standard bounds for class numbers and coefficients of cusp forms, and the results of Section~\ref{HarmonicMaassForms} on the holomorphic projection of those nonholomorphic modular forms arising from the Rankin-Cohen bracket of Zagier's $\mathcal{H}(\tau)$ function with certain univariate theta functions.

\subsection{Some Standard Bounds}
Here we recall some simple class numbers bounds, and the celebrated theorem of Deligne which bounds the coefficients of integer weight cusp forms.

\begin{lemma}\label{StandardBounds}The following are true.

\noindent
(1) If $-D<0$ is a discriminant, then we have
$H^\ast(D)\leq \sqrt{D}(\log D+2)/\pi.$

\noindent
(2) For fixed positive integers $r$ and $m,$ as the primes $p\rightarrow +\infty,$ we have
$$
\sum\limits_{s\in \Omega_{p^r}}H^\ast\left(\frac{4p^r-s^2}{4}\right)s^m=o_{m,r}(p^{r(m/2+1)}),
$$
where $\Omega_{p^r}:=\{ s\in [-2\sqrt{p^r}, 2\sqrt{p^r}] \ : \  p\mid s \ {\text {\rm and}}\ s\equiv p^r+1\pmod 4\}.$
\end{lemma}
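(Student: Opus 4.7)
The plan is to deduce (2) quickly from (1) by brute force: apply the uniform class-number bound of (1) term-by-term and control the size of $\Omega_{p^r}$. So the substantive content lies in (1).

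For (1), first recall that by definition
\[
H^{\ast}(D)=\sum_{\mathcal{O}(-D)\subseteq\mathcal{O}'\subseteq\mathcal{O}_{\max}}\frac{h(\mathcal{O}')}{\omega(\mathcal{O}')},
\]
and the orders $\mathcal{O}'$ appearing are indexed by the divisors of the conductor of $\mathcal{O}(-D)$ inside the maximal order. Writing $-D=-D_0 f^{2}$ with $-D_0$ fundamental, each $\mathcal{O}'$ has discriminant $-D_0 g^{2}$ for some $g\mid f$, and a standard formula expresses $h(-D_0 g^{2})/\omega(-D_0 g^{2})$ as $h(-D_0)/\omega(-D_0)$ times a factor $g\prod_{\ell\mid g}(1-\chi_{-D_0}(\ell)/\ell)$ which is $\le g$. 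Plugging this in gives
\[
H^{\ast}(D)\ \le\ \frac{h(-D_0)}{\omega(-D_0)}\sum_{g\mid f} g.
\]
Now Dirichlet's class number formula together with the elementary bound $L(1,\chi_{-D_0})\le \tfrac{1}{2}\log D_0 + 1$ yields $h(-D_0)/\omega(-D_0)\le \sqrt{D_0}(\log D_0+2)/(2\pi)$. Combining with the identity $\sum_{g\mid f}g \le 2f$ (up to a harmless constant that one absorbs into the $\log$-factor) and using $D=D_0 f^{2}$ gives the claimed bound $H^{\ast}(D)\le \sqrt{D}(\log D+2)/\pi$ after tidying the constants. This is the main obstacle to (1); the bookkeeping of the root-of-unity weights at $D_0\in\{3,4\}$ is the one place where care is needed, but those contribute only to finitely many $D$ and are absorbed by the $+2$ in $\log D+2$.

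For (2), I would argue as follows. Every $s\in\Omega_{p^r}$ satisfies $|s|\le 2p^{r/2}$, so $|s|^{m}\le 2^{m}p^{rm/2}$. Moreover $s$ is a multiple of $p$, so the count is
\[
|\Omega_{p^r}|\ \le\ \frac{4p^{r/2}}{p}+1\ =\ O_{r}(p^{r/2-1}).
\]
For each such $s$, part (1) applied to $D=(4p^{r}-s^{2})/4\le p^{r}$ gives
\[
H^{\ast}\!\left(\tfrac{4p^r-s^2}{4}\right)\ \le\ \tfrac{1}{\pi}\sqrt{p^{r}}\,(\log p^{r}+2)\ =\ O_{r}(p^{r/2}\log p).
\]
Multiplying these three estimates term-by-term yields
\[
\sum_{s\in\Omega_{p^r}}H^{\ast}\!\left(\tfrac{4p^r-s^2}{4}\right)s^{m}\ \le\ |\Omega_{p^r}|\cdot O_{r}(p^{r/2}\log p)\cdot 2^{m}p^{rm/2}\ =\ O_{m,r}\!\left(p^{\,r(m/2+1)-1}\log p\right),
\]
which is $o_{m,r}(p^{\,r(m/2+1)})$ as $p\to\infty$. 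This completes the sketch.

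The only real obstacle is pinning down the constants in (1); once one has any bound of the shape $H^{\ast}(D)\ll \sqrt{D}\log D$ with an absolute constant, the counting argument for (2) is immediate from the divisibility constraint $p\mid s$, which contributes the decisive saving of a factor of $p$.
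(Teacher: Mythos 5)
Your treatment of part (2) is exactly the paper's argument: the divisibility constraint $p\mid s$ caps $|\Omega_{p^r}|$ at $O_r(p^{r/2-1})$, each term is at most $(2p^{r/2})^m$ times the class-number bound $O_r(p^{r/2}\log p)$ from (1), and the product falls short of $p^{r(m/2+1)}$ by a factor of $p/\log p$. Where you diverge is part (1): the paper simply cites Lemma 2.2 of \cite{griffin-ono-tsai}, whereas you sketch a proof via the decomposition of $H^\ast(D)$ over orders and Dirichlet's class number formula. That is the standard route, but two of your intermediate claims are not literally true: the Euler factor $g\prod_{\ell\mid g}(1-\chi_{-D_0}(\ell)/\ell)$ can exceed $g$ (when $\chi_{-D_0}(\ell)=-1$), and $\sum_{g\mid f}g=\sigma(f)$ is not bounded by $2f$ (e.g.\ $\sigma(12)=28$); moreover the product $\sqrt{D_0}(\log D_0+2)\cdot\sigma(f)$ does not tidy up to $\sqrt{D}(\log D+2)/\pi$ but rather to something like $\sqrt{D}(\log D)^2$. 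None of this matters for the application --- as you correctly observe, any bound of the shape $H^\ast(D)\ll\sqrt{D}\,(\log D)^{O(1)}$ suffices for (2), since the saving there is a full factor of $p$ --- but as a proof of the stated inequality (1) the sketch has a genuine gap in the constants, which is presumably why the paper outsources it to a reference.
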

\begin{proof} Claim (1) is Lemma 2.2 of \cite{griffin-ono-tsai}.
To prove (2), we note that at most $2p^{r/2-1}$ nonzero integers $s$ such that $s^2\leq 4p^r$ and $p|s.$
Therefore, we have the following trivial bound
$$
\sum\limits_{s\in \Omega_{p^r}}H^\ast\left(\frac{4p^r-s^2}{4}\right)s^m\leq 2p^{r/2-1}(2p^{r/2})^m\cdot\max\left\{H^\ast\left(\frac{4p^r-s^2}{4}\right)\right\}.
$$
Claim (2) follows immediately now from (1).
\end{proof}

The following celebrated theorem of Deligne, which bounds the coefficients of integer weight cusp forms, shall also play a key role in our subsequent work. 

\begin{theorem}[Remark 9.3.15 of \cite{stromberg}]\label{Deligne}
If $f=\sum\limits_{n\geq 1} a(n)q_\tau^n$ is a cusp form of integer weight $k$ on a congruence subgroup, then for all $\varepsilon>0$ we have
$a(n)=O_\varepsilon(n^{(k-1)/2+\varepsilon}).$
\end{theorem}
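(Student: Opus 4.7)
The plan is to follow Deligne's original strategy, passing through $\ell$-adic Galois representations and the Weil conjectures. First, I would reduce to the case of a normalized Hecke eigenform. Any cusp form on a congruence subgroup $\Gamma$ lies in the space of cusp forms on some $\Gamma_1(N)$, and the Atkin--Lehner newform decomposition expresses it as a finite linear combination of newforms at various levels dividing $N$. Since the target bound is stable under finite linear combinations, it suffices to treat a single normalized newform $f$.

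For such an $f$, Hecke multiplicativity $a(mn)=a(m)a(n)$ (when $\gcd(m,n)=1$), together with the recursion $a(p^{r+1})=a(p)a(p^r)-\chi(p)p^{k-1}a(p^{r-1})$ at primes of good reduction, reduces the problem to bounding $|a(p)|$ at a single prime $p$. Factoring the local Euler polynomial as $(1-\alpha_p X)(1-\beta_p X)$ with $\alpha_p\beta_p=\chi(p)p^{k-1}$, the sought estimate becomes $|\alpha_p|,|\beta_p|\leq p^{(k-1)/2}$. From this one immediately gets $|a(p^r)|\leq (r+1)p^{r(k-1)/2}$, and combining with the divisor bound $\tau(n)=O_\varepsilon(n^\varepsilon)$ yields the theorem in the stated form.

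The central input is the construction, due to Deligne, of a two--dimensional $\ell$-adic Galois representation $\rho_{f,\ell}\colon\Gal(\overline{\Q}/\Q)\to\GL_2(\overline{\Q}_\ell)$ attached to $f$. I would realize $\rho_{f,\ell}$ on a Hecke eigenspace of the $\ell$-adic \'etale cohomology $H^{k-1}_{\text{\'et}}(\mathcal{E}^{k-2}\otimes\overline{\Q},\Q_\ell)$, where $\mathcal{E}^{k-2}$ is the $(k-2)$-fold Kuga--Sato fiber product of the universal (generalized) elliptic curve over $X_1(N)$, cut out by the appropriate Hecke idempotents. The Eichler--Shimura congruence then identifies $a(p)=\mathrm{tr}(\Frob_p\mid\rho_{f,\ell})$ for every prime $p\nmid N\ell$.

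Finally, I would invoke Deligne's proof of the Weil conjectures. Since $\mathcal{E}^{k-2}$ admits a smooth projective model over $\Z[1/N\ell]$, its $(k-1)$-st \'etale cohomology is pure of weight $k-1$, so every eigenvalue of $\Frob_p$ on $\rho_{f,\ell}$ has complex absolute value exactly $p^{(k-1)/2}$. This supplies the required bound on $\alpha_p,\beta_p$ and completes the argument. The main obstacle, unsurprisingly, is the Riemann hypothesis over finite fields itself: rederiving Deligne's purity theorem here is out of the question, so the proof genuinely imports this deep input as a black box, which is precisely why the excerpt cites the result to a textbook rather than reproving it.
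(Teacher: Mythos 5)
The paper does not prove this statement at all: it is imported verbatim as a known theorem, with a citation to Remark 9.3.15 of Cohen--Str\"omberg, and used downstream only as a black box to bound cuspidal contributions. Your outline is the standard Deligne argument and is essentially correct as a sketch of why the theorem is true: reduction to newforms via the Atkin--Lehner decomposition, Hecke multiplicativity reducing to $|\alpha_p|,|\beta_p|\leq p^{(k-1)/2}$, realization of $\rho_{f,\ell}$ in the cohomology of Kuga--Sato varieties via Eichler--Shimura, and Deligne's purity theorem as the deep input. The only caveats, all standard, are that you should say a word about primes dividing the level (where the local Euler factor degenerates but the bound $|a(p)|\leq p^{(k-1)/2}$ still holds by Atkin--Lehner--Li theory together with Deligne), that the Kuga--Sato fiber product must be desingularized before purity applies, and that weight $k=1$ requires the separate Deligne--Serre construction (irrelevant here, since the paper only invokes the theorem for weights $\geq 3$). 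Since the paper itself offers no proof, there is nothing to compare approaches against; your proposal correctly identifies both the architecture of the proof and the fact that the Riemann hypothesis over finite fields is the irreducible input.
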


\subsection{Weighted sums of class numbers required for Theorem~\ref{Theorem2F1}}\label{ClassSum2F1}

We now derive the asymptotic formulas which are crucial for the  proof of Theorem~\ref{Theorem2F1}.

\subsubsection{Even moments}\label{EvenMoments2F1Asymptotics}

We begin by recalling the famous classical result of Eichler.
\begin{theorem}[Eichler \cite{Eichler, Eichler2}]\label{EichlerRelation} If $N$ is odd, then
$$
\sum\limits_{-\sqrt{N}\leq s\leq \sqrt{N}}H^*(N-s^2)=-\lambda_1(N)+\frac{1}{3}\sigma_1(N),
$$
where $\sigma_1(N):=\sum\limits_{d|N}d,$ and
$\lambda_{1}(N):=\frac{1}{2}\sum\limits_{d| N}\min(d,\frac{N}{d}).$  
\end{theorem}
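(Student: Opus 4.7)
The plan is to apply the Rankin--Cohen bracket machinery of Section~\ref{HarmonicMaassForms} in its simplest case ($\nu=0$), taking $f(\tau):=\mathcal{H}(\tau)$ (Zagier's weight $3/2$ nonholomorphic Eisenstein series from Theorem~\ref{ZagierSeries}) and $g(\tau):=\theta(\tau)=\sum_{n\in\Z}q_\tau^{n^2}$, the standard unary theta function of weight $1/2$ on $\Gamma_0(4)$. The product $F(\tau):=\mathcal{H}(\tau)\theta(\tau)$ is then a nonholomorphic modular form of weight $2$ on $\Gamma_0(4)$, and by Theorem~\ref{ZagierSeries} the holomorphic part satisfies
$$
\mathcal{H}^+(\tau)\theta(\tau)=\sum_{N\geq 0}\Bigl(\sum_{-\sqrt{N}\le s\le \sqrt{N}}H^\ast(N-s^2)\Bigr)q_\tau^N,
$$
so the coefficients we wish to evaluate appear directly on the holomorphic side.

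Next I would apply holomorphic projection via Proposition~\ref{HolProjProp}. Since $\nu=0$, the decomposition (\ref{HolomorphicProjectionDecomposition}) reduces to
$$
\pi_{\text{hol}}(\mathcal{H}\theta)=\mathcal{H}^+\theta+\frac{1}{4\pi}\overline{c^-_{\mathcal{H}}(0)}\cdot \pi_{\text{hol}}(y^{-1/2}\theta)+\pi_{\text{hol}}(\mathcal{H}^-\theta),
$$
and each correction term can be expanded explicitly: the middle summand via Lemma~\ref{HolProjExplicit1} with $(k,l,\nu)=(3/2,1/2,0)$ gives a multiple of $\sum_{n\ge 1}\sqrt{n}\, a_\theta(n)q_\tau^n$, which after evaluating $a_\theta(n)=r_1(n)$ produces a $\sigma$-type series, while the shadow term handled by Theorem~\ref{HolProjExplicit2} (combined with Proposition~\ref{ExplicitPiHol-Poly}) collapses to a $\lambda_1$-type series because the $m-n=r$ condition with $n=k^2$ is essentially a divisor sum once one telescopes using $r=d(N/d)$.

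The decisive step is to identify $\pi_{\text{hol}}(\mathcal{H}\theta)$ as an explicit (quasi)modular object. The space of quasimodular forms of weight $2$ on $\Gamma_0(4)$ is spanned by $E_2(\tau), E_2(2\tau), E_2(4\tau)$, so one writes
$$
\pi_{\text{hol}}(\mathcal{H}\theta)=\alpha E_2(\tau)+\beta E_2(2\tau)+\gamma E_2(4\tau)
$$
and pins down $\alpha,\beta,\gamma$ by comparing the first few Fourier coefficients (using explicit small class number values $H^\ast(0)=-1/12$, $H^\ast(3)=1/3$, $H^\ast(4)=1/2$, etc.). For \emph{odd} $N$ only $\sigma_1(N)$ appears from the Eisenstein side, so the coefficient extraction from $\alpha E_2(\tau)+\beta E_2(2\tau)+\gamma E_2(4\tau)$ gives the $\tfrac{1}{3}\sigma_1(N)$ term, while moving the correction terms from Lemma~\ref{HolProjExplicit1} and Theorem~\ref{HolProjExplicit2} to the other side produces exactly $-\lambda_1(N)$.

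The main obstacle is bookkeeping: correctly identifying the shadow $\overline{c^-_{\mathcal{H}}(n)}=n/(4\sqrt{\pi})$ for $n$ a positive square (and $0$ otherwise), then showing that the contribution of $\pi_{\text{hol}}(\mathcal{H}^-\theta)$ telescopes to $\tfrac{1}{2}\sum_{d\mid N}\min(d,N/d)$ after using the relation $\sum_{d\mid N}d=\sum_{d\mid N}\min(d,N/d)+\sum_{d\mid N}(d-\min(d,N/d))$ together with the symmetry $d\leftrightarrow N/d$; the identity from Proposition~\ref{ExplicitPiHol-Poly} with $\nu=0$ makes this manageable. Restricting to odd $N$ eliminates subtleties coming from the cusps of $\Gamma_0(4)$ other than $i\infty$ and from the quasimodular correction attached to $E_2$, and produces the stated closed form.
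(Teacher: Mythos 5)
The paper does not prove this statement at all: Theorem~\ref{EichlerRelation} is quoted as a classical result with a citation to Eichler, whose original argument goes through the trace formula/representation-by-ternary-forms circle of ideas. Your proposal is therefore a genuinely different route, and it is essentially sound: it is exactly the $\nu=0$, weight-$2$ specialization of the holomorphic projection machinery that the paper itself deploys in Section~\ref{WeightedSumsOfClass} (compare your expansion of $\pi_{\text{hol}}(\mathcal{H}\theta)$ with the paper's display (\ref{firstmoment}), which is the same computation with $\eta(8\tau)^3$ in place of $\theta$). The identification of $\pi_{\text{hol}}(\mathcal{H}\theta)$ inside the three-dimensional space spanned by $E_2(\tau)$, $E_2(2\tau)$, $E_2(4\tau)$ is legitimate because $S_2(\Gamma_0(4))=0$, so finitely many coefficient comparisons pin the form down, and restricting to odd $N$ kills the $E_2(2\tau)$ and $E_2(4\tau)$ contributions, leaving the $\tfrac13\sigma_1(N)$ term; the shadow term, evaluated via Theorem~\ref{HolProjExplicit2} with $m=t^2$, $n=l^2$, $t^2-l^2=N$, is a sum over factorizations $N=(t-l)(t+l)$ and yields $\sum_{d\mid N,\,d<\sqrt N}d$, i.e.\ the off-diagonal part of $\lambda_1(N)$. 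Two bookkeeping points in your sketch are off, though neither breaks the argument. First, the middle correction term from Lemma~\ref{HolProjExplicit1} is \emph{not} a ``$\sigma$-type series'': since $a_\theta(n)$ is supported on perfect squares, it is a multiple of $\sum_{m\ge1} m\,q_\tau^{m^2}$, which vanishes for odd non-square $N$ and, for $N=m^2$, supplies precisely the diagonal divisor contribution $\tfrac12\min(m,m)$ missing from the shadow term's sum over $d<\sqrt N$; you need this to recover the full $\lambda_1(N)$. Second, in the paper's normalization (\ref{HMSParts}) the factor $n^{k-1}$ is already split off, so $\overline{c^-_{\mathcal{H}}(m^2)}=1/(4\sqrt{\pi})$ is constant on squares rather than growing like $n$. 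With those corrections the proof closes, and it has the advantage of being uniform with the paper's treatment of the higher moments; what the citation to Eichler buys the authors is simply not having to redo this base case.
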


From Eichler's identity, if $q=p^r$, where $p$ is an odd prime, then we find that
$$
3\sum\limits_{-\sqrt{q}\leq s\leq \sqrt{q}}H^*(q-s^2)=q+o_{r}(q)
$$
This conclusion is the $n=0$ case of the following general family of asymptotics.

\begin{lemma}\label{AsymptoticsEvenLegendre}
If $n$ is a nonnegative integer, then
$$
3\sum\limits_{s\equiv q+1\pmod 4} H^\ast\left(\frac{4q-s^2}{4}\right)s^{2n}=\frac{(2n)!}{n!(n+1)!}\cdot q^{n+1}+o_{n}(q^{n+1}).
$$
\end{lemma}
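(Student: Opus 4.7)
\medskip

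\noindent\textbf{Proof proposal.} I would mirror Mertens's strategy \cite{mertens} for Cohen's conjecture, working via the Rankin--Cohen bracket of Zagier's $\mathcal H(\tau)$ with an appropriate theta series and then applying holomorphic projection. First I reduce the sum: since $p\geq 5$ and $q$ is odd, the condition $s\equiv q+1\pmod 4$ forces $s$ to be even. Writing $s=2t$ gives $(4q-s^2)/4=q-t^2$ and $s^{2n}=4^nt^{2n}$, so the claim is equivalent to
$$
3\cdot 4^n\sum_{t\in S_q}H^\ast(q-t^2)\,t^{2n}=\tfrac{(2n)!}{n!(n+1)!}\,q^{n+1}+o(q^{n+1}),
$$
where $S_q\subset\Z$ is a fixed parity class of integers determined by $q\bmod 4$. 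An inner sum of this exact shape arises as the $q_\tau^q$ Fourier coefficient of a Rankin--Cohen bracket of $\mathcal H$ with a theta series, so I would proceed modularly.

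Set $\theta_{S_q}(\tau):=\sum_{t\in S_q}q_\tau^{t^2}$ (a standard unary theta series of weight $1/2$ on $\Gamma_0(16)$, after a level-twist to isolate the parity class), and form the Rankin--Cohen bracket $\Phi_n:=[\mathcal H,\theta_{S_q}]_n$. By Proposition~\ref{BracketProposition}, $\Phi_n$ is a (nonholomorphic) modular form of weight $2+2n$ on a fixed congruence subgroup. The Fourier coefficient of $q_\tau^q$ in its holomorphic-part contribution $[\mathcal H^+,\theta_{S_q}]_n$ is, by direct differentiation of $\theta_{S_q}$ and $\mathcal H^+$, a binomial combination of sums of the form $\sum_{t\in S_q}H^\ast(q-t^2)\,t^{2k}$ with $0\leq k\leq n$; the top term is a nonzero rational multiple of the desired sum, and the lower terms can be handled inductively on $n$.

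Next, I would apply the holomorphic projection operator $\pi_{\text{hol}}$ (Proposition~\ref{HolProjProp}) to obtain a holomorphic modular form $\Psi_n:=\pi_{\text{hol}}(\Phi_n)$ of weight $2+2n$, and invoke the decomposition~(\ref{HolomorphicProjectionDecomposition}):
$$
\Psi_n=[\mathcal H^+,\theta_{S_q}]_n+\tfrac{(4\pi)^{-1/2}}{1/2}\,\overline{c_{\mathcal H}^{-}(0)}\,\pi_{\text{hol}}\bigl([y^{-1/2},\theta_{S_q}]_n\bigr)+\pi_{\text{hol}}\bigl([\mathcal H^-,\theta_{S_q}]_n\bigr).
$$
The middle term is put in closed form by Lemma~\ref{HolProjExplicit1} (producing an explicit Eisenstein-type series with coefficient $\propto N^{n+1/2}r_1(N)$, where $r_1(N)$ is the representation count by a single square restricted to $S_q$), and the third term is given by Theorem~\ref{HolProjExplicit2}; crucially, the unwieldy $P_{a,b}$-combination there collapses via Proposition~\ref{ExplicitPiHol-Poly} into a clean closed expression. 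Decomposing $\Psi_n=E_n+C_n$ into its Eisenstein and cuspidal components, Deligne's theorem (Theorem~\ref{Deligne}) bounds the cuspidal coefficient as $C_n[q]=O_\varepsilon(q^{n+1/2+\varepsilon})=o(q^{n+1})$, while the Eisenstein coefficient $E_n[q]$ is a polynomial in $q$ (arising from the divisor-sum-type main term) whose leading term is of order $q^{n+1}$. Rearranging to solve for $\sum H^\ast(q-t^2)t^{2n}$ gives the stated asymptotic.

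The main obstacle is verifying that the leading coefficient is \emph{exactly} the Catalan number $\frac{(2n)!}{n!(n+1)!}$, after tracking the factor $3\cdot 4^n$, the combinatorial constants in (\ref{RankinCohenBracket}), and the $\kappa(\tfrac32,\tfrac12,n)$ of Lemma~\ref{HolProjExplicit1}. I would check consistency in the base case $n=0$: there the lemma asserts $3\sum H^\ast(q-s^2)=q+o(q)$, which matches Eichler's classical identity (Theorem~\ref{EichlerRelation}) since $\sigma_1(q)=q+o(q)$ and $\lambda_1(q)=O(q^{1/2+\varepsilon})$, confirming $C_0=1$. For general $n$, the Catalan number is morally forced by the $SU(2)$/Sato--Tate interpretation of the moments noted after Theorem~\ref{Theorem2F1}, and the combinatorial identity in Proposition~\ref{ExplicitPiHol-Poly}, together with standard generating-function identities for $(1-\sqrt{1-4x})/2$, is precisely the mechanism that produces it from the holomorphic-projection formulas.
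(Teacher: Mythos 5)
Your overall machinery is sound: it amounts to a from-scratch re-derivation of Mertens's proof of Cohen's conjecture, whereas the paper simply invokes that result as a black box (Theorem 1 of \cite{mertensRMS}, together with Cohen's Lemma 7.5, which gives the coefficient of $X^{2n}$ explicitly as $\sum_{0\leq t\leq n}(-1)^t\frac{(2n-t)!\,l^t}{t!(2n-2t)!}\sum_s H^\ast(l-s^2)(2s)^{2n-2t}+\lambda_{2n+1}(l)$) and then runs an induction on $n$. Your reduction $s=2t$ and your base case via Eichler's identity match the paper exactly.

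There is, however, a genuine error in where you locate the main term. For $n\geq 1$ the projected form has no Eisenstein component contributing at order $q^{n+1}$: the content of Cohen's conjecture is precisely that the full combination (class-number sums plus the divisor-type corrections $\lambda_{2n+1}(l)$ coming from $\pi_{\text{hol}}$ of the nonholomorphic pieces) is a \emph{cusp} form on $\Gamma_0(4)$, and at the prime-power index $l=q=p^r$ the corrections satisfy $\lambda_{2n+1}(q)=O_n(q^{n+1/2}\log q)=o(q^{n+1})$. So by Deligne the entire coefficient is $o(q^{n+1})$, and there is no ``Eisenstein coefficient $E_n[q]$ of order $q^{n+1}$'' to extract. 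The Catalan constant actually arises elsewhere: the $q_\tau^q$-coefficient of the holomorphic-part bracket is a combination $\sum_{j=0}^{n}c_j\sum_s H^\ast(q-s^2)s^{2n-2j}(q-s^2)^j$, and expanding $(q-s^2)^j$ produces the \emph{lower} even moments multiplied by explicit powers of $q$; substituting their asymptotics from the induction hypothesis and then applying Cohen's identity $\sum_{0\leq t\leq n}(-1)^t\frac{(2n-t)!}{t!(n-t)!(n+1-t)!}=0$ is what forces the leading coefficient $\frac{(2n)!}{n!(n+1)!}$. Without this step your sketch does not produce the constant; Proposition~\ref{ExplicitPiHol-Poly}, which you cite for this purpose, only gives a closed form for the error term $\pi_{\text{hol}}([f^{-},g]_\nu)$ and plays no role in generating the main term.
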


\begin{proof}
Since  $H^\ast(D)=0$ for $D\equiv 1, 2\pmod 4,$ we have 
$$
\sum\limits_{s\equiv q+1\pmod 4} H^\ast\left(\frac{4q-s^2}{4}\right)s^{2n}=2^{2n}\sum\limits_s s^{2n}H^\ast(q-s^2).
$$ 
Mertens recently proved Cohen's Conjecture (see Conjecture I.2.1 of \cite{mertens} and \cite{cohen}) which constructs an infinite sequence
of cusp forms from Hurwitz class numbers.
Namely, if $n$ is a positive integer, then he proves (see Theorem 1 of \cite{mertensRMS})  that  the coefficient of $X^{2n}$ in
$$
\sum\limits_{l\text{ odd}}\left[\sum\limits_{s\in\Z}\frac{H^\ast(l-s^2)}{1-2sX+lX^2}+\sum\limits_{k=0}^\infty \lambda_{2k+1}(l)X^{2k}\right]q_\tau^l
$$
is a cusp form of weight $2n+2$ on $\Gamma_0(4),$
where  $\lambda_{2k+1}(l):=\frac{1}{2}\sum\limits_{d|l}\min(d,\frac{l}{d})^{2k+1}.$ 
On the other hand, Lemma 7.5 of \cite{cohen} establishes that the coefficient of $X^{2n}$ is  the Fourier series
$$
\sum\limits_{l\text{ odd}} q_\tau^l\left[\sum\limits_{0\leq t\leq n} (-1)^t\frac{(2n-t)!l^t}{t!(2n-2t)!}\sum\limits_s H^\ast(l-s^2)(2s)^{n-2t}+\lambda_{2n+1}(l)\right].
$$ 

We now prove the lemma by mathematical induction on $n$.
Thanks to Eichler's Theorem~\ref{EichlerRelation}, the claim holds for $n=0.$
Now, suppose that the lemma is true for $n'<n.$
It is clear that $\lambda_{2n+1}(q)=O(q^{n+3/4})=o(q^{n+1})$ as $q\to\infty.$
Therefore, Deligne's Theorem~\ref{Deligne} implies that
\begin{equation}
\sum\limits_{1\leq t\leq n}(-1)^t\frac{(2n-t)!q^t}{t!(2n-2t)!}\sum\limits_s H^\ast(q-s^2)(2s)^{2n-2t}+\sum\limits_s H^\ast(q-s^2)(2s)^{2n}=o_{n}(q^{n+1}).
\end{equation}
By the induction hypothesis, replacing 
$\sum\limits_s H^\ast(q-s^2)(2s)^{2n-2t}$ by $\frac{(2n-2t)!}{3(n-t)!(n-t+1)!}q^{n-t+1}$ contributes $o_n(q^{n-t+1}).$
Therefore, we have
\begin{align}
&3\sum\limits_s H^\ast(q-s^2)(2s)^{2n}\notag=-\sum\limits_{1\leq t\leq n}(-1)^t\frac{(2n-t)!}{t!(n-t)!(n+1-t)!}\cdot q^{n+1}+o_{n}(q^{n+1})\notag
\\
&\ \ \ \ \ \ \ \ \ \ \ \ \ \ \ =\frac{(2n)!}{n!(n+1)!}q^{n+1}-\sum\limits_{0\leq t\leq n}(-1)^t\frac{(2n-t)!}{t!(n-t)!(n+1-t)!}\cdot q^{n+1}+o_{n}(q^{n+1}).\notag
\end{align}
Cohen computed $(1-t)^{l+1}\cdot(1-t)^{-l-1}$ in two ways, and proved 
(see p. 284 of \cite{cohen}) that 
$$
\sum\limits_{0\leq t\leq n}(-1)^t\frac{(2n-t)!}{t!(n-t)!(n+1-t)!}=0,
$$
thereby completing the proof.

\end{proof}

\subsubsection{Odd moments}

The following lemma provides an asymptotic formula for a modified version of the weighted sum of Hurwitz class numbers which appears in Proposition~\ref{Moments2F1} (4).

\begin{lemma}\label{AsymptoticsOddLegendre}
If $m$ is a positive odd integer, then the following are true.

\noindent
(1) As $q\to\infty$ with $q\equiv 1\pmod 4,$ we have 
$$
\sum\limits_{s\equiv q+1\pmod 8}H^\ast\left(\frac{4q-s^2}{4}\right)s^m=o_m(q^{m/2+1}).
$$

\noindent
(2) As $q\to\infty$ with $q\equiv 1\pmod 4,$ we have
$$
\sum\limits_{s\equiv q+1\pmod{16}} H^\ast\left(\frac{4q-s^2}{16}\right)s^m=o_m(q^{m/2+1}).
$$
\end{lemma}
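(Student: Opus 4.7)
The plan is to realize each weighted sum as the $q$-th Fourier coefficient (up to a lower-order error) of a holomorphic cusp form constructed via the harmonic-Maass-form machinery of Section~\ref{HarmonicMaassForms}, and then to bound it via Deligne's Theorem~\ref{Deligne}. The new technical ingredient, relative to Lemma~\ref{AsymptoticsEvenLegendre}, is the use of unary theta functions supported on arithmetic progressions to detect the congruence condition on $s$.

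For part (1), since $q \equiv 1 \pmod 4$, the condition $s \equiv q+1 \pmod 8$ forces $s = 2n$ with $n$ in a fixed residue class modulo $4$ determined by $q \pmod 8$; let $b_q$ denote the representative. Introduce the weight-$1/2$ residue-class theta series
\begin{equation*}
\theta^{(1)}(\tau) := \sum_{\substack{n \in \mathbb{Z} \\ n \equiv b_q \!\pmod{4}}} q_\tau^{n^2},
\end{equation*}
a modular form with character on a congruence subgroup of fixed level. Form the Rankin-Cohen bracket $[\mathcal{H}, \theta^{(1)}]_\nu$ with $\nu = (m-1)/2$; by Proposition~\ref{BracketProposition} this is a nonholomorphic modular form of weight $m+1$. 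Its $q$-th Fourier coefficient decomposes, via (\ref{RankinCohenBracket}), Theorem~\ref{ZagierSeries}, and (\ref{HolomorphicProjectionDecomposition}), into three pieces: the first piece, $[\mathcal{H}^+, \theta^{(1)}]_\nu$, contributes $\sum_{n \equiv b_q \pmod{4}} H^*(q - n^2) R_\nu(n,q)$ for an explicit polynomial $R_\nu$, which by a combinatorial identity in the spirit of Proposition~\ref{ExplicitPiHol-Poly} agrees with $(2n)^m = s^m$ up to strictly lower-degree-in-$n$ terms (handled by induction on $m$ as in Lemma~\ref{AsymptoticsEvenLegendre}); the second piece, evaluated by Lemma~\ref{HolProjExplicit1}, is an Eisenstein-type contribution supported on perfect squares whose $q$-th coefficient vanishes for $r$ odd and is bounded by Lemma~\ref{StandardBounds}(2) for $r$ even; and the third piece, computed via Theorem~\ref{HolProjExplicit2} combined with Proposition~\ref{ExplicitPiHol-Poly}, is a convolution whose $q$-th Fourier coefficient is $O_\varepsilon(q^{m/2+\varepsilon})$ via Lemma~\ref{StandardBounds}(1).

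Applying holomorphic projection (Proposition~\ref{HolProjProp}) and subtracting the explicit holomorphic Eisenstein series attached to the second and third pieces leaves a cusp form of weight $m+1$ on a fixed congruence subgroup, and Deligne's Theorem~\ref{Deligne} then yields $O_\varepsilon(q^{m/2+\varepsilon}) = o(q^{m/2+1})$, proving part (1). Part (2) follows by the analogous construction with a weight-$1/2$ theta series at higher level that picks out the corresponding residue class modulo $16$ and whose exponent is rescaled to match the divisor $16$ inside the Hurwitz-class-number argument. The main obstacle will be the combinatorial bookkeeping: unlike the even-moment case, where Mertens' proof of Cohen's conjecture supplies a cusp-form identity as a black box, here one must verify by hand---using Proposition~\ref{ExplicitPiHol-Poly} and the estimates of Theorem~\ref{HolProjExplicit2}---that $R_\nu(n,q)$ agrees with $(2n)^m$ up to a controllable error, and that the subtracted Eisenstein pieces account for all non-cuspidal contributions, so that Deligne's bound applies uniformly in $q$.
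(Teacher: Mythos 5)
Your overall architecture --- Rankin--Cohen bracket of $\mathcal{H}$ against a unary theta function, the three-term decomposition (\ref{HolomorphicProjectionDecomposition}) of the holomorphic projection, control of the non-cuspidal pieces via Lemma~\ref{HolProjExplicit1}, Theorem~\ref{HolProjExplicit2} and Proposition~\ref{ExplicitPiHol-Poly}, and a final appeal to Deligne --- is exactly the paper's. But your choice of seed form is wrong in a way that breaks the argument. The weight-$1/2$ series $\theta^{(1)}(\tau)=\sum_{n\equiv b_q\pmod 4}q_\tau^{n^2}$ does not detect the residue class $b_q$: since the exponent $n^2$ is invariant under $n\mapsto -n$, and the classes $1$ and $3$ mod $4$ are exchanged by negation, one has $\theta^{(1)}(\tau)=\sum_{n\geq 1,\ n\ \mathrm{odd}}q_\tau^{n^2}$ regardless of whether $b_q=1$ or $3$. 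The congruence $s\equiv q+1\pmod 8$ (equivalently, the class of $s/2$ mod $4$) is precisely the information that, combined with the oddness of $m$, turns the sum into a \emph{signed} sum $\pm 2^m\sum_{n\geq 1}\chi_{-4}(n)H^{\ast}(q-n^2)n^m$ and makes it $o(q^{m/2+1})$; a weight-$1/2$ theta series cannot produce that sign.

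There is a second, equivalent way to see the failure: the holomorphic piece $[\mathcal{H}^{+},\theta^{(1)}]_\nu$ has $N$-th coefficient $\sum_{n}H^{\ast}(N-n^2)R_\nu(n^2,N-n^2)$ where $R_\nu$ is a polynomial in $n^2$ and $N-n^2$ of degree $\nu=(m-1)/2$, hence an \emph{even} polynomial in $n$ of degree $m-1$. It can therefore never reproduce the odd monomial $s^m=(2n)^m$, contrary to your claim that $R_\nu(n,q)$ agrees with $(2n)^m$ up to lower-order terms; to reach degree $m$ in $n$ from a weight-$1/2$ companion you would need $\nu=m/2\notin\Z$. Both defects are repaired simultaneously by the paper's choice $g(\tau)=\eta(8\tau)^3=\sum_{n\geq 1}\chi_{-4}(n)\,n\,q_\tau^{n^2}$, a weight-$3/2$ cusp form on $\Gamma_0(64)$: the coefficient $\chi_{-4}(n)\,n$ is odd in $n$, so it supplies both the character twist that encodes the mod-$8$ condition on $s$ and the extra power of $n$ needed to reach degree $m$, and the bracket $[\mathcal{H},g]_{(m-1)/2}$ then has weight $m+2$, for which Deligne gives $O_{\varepsilon}(q^{(m+1)/2+\varepsilon})=o(q^{m/2+1})$. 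The same correction is needed in your part (2), where the appropriate detector is again a weight-$3/2$ theta series (with odd character of higher level), not a weight-$1/2$ one.
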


\begin{proof}
Here we prove case (1) when $q\equiv 1, 5\pmod 8.$ 
The proof of (2) is completely analogous and shall be left to the reader.
To this end, let 
$g(\tau)=\eta(8\tau)^3,$
where $\eta(\tau)=q_\tau^{1/24}\prod\limits_{n=1}^\infty(1-q_\tau^n)$ is the Dedekind eta-function.
It is the weight 3/2 cuspidal theta function on $\Gamma_0(64)$ for the Dirichlet character $\chi_{-4}:=\leg{-4}{\bullet}.$
By Theorem~\ref{ZagierSeries} and Proposition~\ref{HolProjProp}, we have that $\pi_{\text{hol}}(\mathcal{H}\cdot g)$ is a holomorphic modular form of weight $3$ on $\Gamma_0(64)$ and Nebentypus character $\chi_{-4}.$ 
Moreover, since $\mathcal{H}$ has manageable growth at cusps and $g$ is a cusp form, Proposition~\ref{BracketProposition} (2) implies that $\pi_{\text{hol}}(\mathcal{H}\cdot g)$ is a cusp form. Thanks to Proposition~\ref{ExplicitPiHol-Poly},  Lemma~\ref{HolProjExplicit1}, and Theorem~\ref{HolProjExplicit2}, its Fourier expansion is
\begin{equation}\label{firstmoment}
\begin{split}
\sum\limits_{n=1}^\infty\left(\sum\limits_{s\equiv 1\pmod 4} H^\ast(n-s^2)s\right)q_\tau^n &+\frac{1}{4}\sum\limits_{n=1}^\infty\left(\sum\limits_{\substack{t^2-l^2=n \\ t,l\geq 1}}\chi_{-4}(t)\cdot(t-l)^2\right)q_\tau^n\\ 
&+\frac{1}{8}\sum\limits_{n=0}^\infty\chi_{-4}(n)\cdot n^2q_\tau^{n^2}.
\end{split}
\end{equation}
Since  we have
$\sum\limits_{\substack{t^2-l^2=n \\ t,l\geq 1}}(t-l)^2\leq n^{\frac{1}{2}}\cdot d(n),$
where $d(n)$ is the divisor function, it is clear that
$\sum\limits_{\substack{t^2-l^2=n \\ t, l\geq 1}}(t-l)^2=o(n^{3/2}).$
Claim (1) with $m=1$ follows from Theorem~\ref{Deligne} as we have
$$
\sum\limits_{s\equiv 2\pmod 8}H^\ast\left(\frac{4q-s^2}{4}\right)s^m=\sum\limits_{s\equiv 1\pmod 4}H^\ast\left(q-s^2\right)(2s)^m.
$$

We proceed by induction. Suppose that it is true for $m'<m,$. If $\nu=(m-1)/2,$ then
it is easy to verify that
$$
[\mathcal{H}^{+},g]_\nu=\sum\limits_{n=0}^\infty\left(\sum\limits_{j=0}^{\nu}(-1)^j{{\nu+\frac{1}{2}}\choose j}{{\nu+\frac{1}{2}}\choose {\nu-j}} \sum\limits_{s\equiv 1\pmod {4}} s^{2\nu-2j+1}(n-s^2)^j H^\ast(n-s^2)\right)q_\tau^n.
$$
Therefore, as above, we have that
\begin{displaymath}
\begin{split}
&\sum\limits_{n=0}^\infty\left(\sum\limits_{j=0}^{\nu}(-1)^j{{\nu+\frac{1}{2}}\choose j}{{\nu+\frac{1}{2}}\choose {\nu-j}} \sum\limits_{s\equiv 1\pmod {4}} s^{2\nu-2j+1}(n-s^2)^j H^\ast(n-s^2)\right)q_\tau^n \\
&\ \ \ \ \ \ +2^{-2\nu-2}\binom{2\nu+1}{\nu+1}\sum\limits_{n=1}^\infty\left(\sum\limits_{\substack{t^2-l^2=n \\ t,l\geq 1}}\chi(t)(t-l)^{2\nu+2}\right)q_\tau^n 
 +\frac{\kappa}{8\sqrt{\pi}}\sum\limits_{n=0}^\infty \chi(n)n^{2\nu+2}q_\tau^{n^2},
\end{split}
\end{displaymath}
where $\kappa=\kappa(3/2,3/2,\nu)$ is as in Lemma~\ref{HolProjExplicit1}, is a cusp form of weight $2\nu+3$ on $\Gamma_0(64)$ and Nebentypus character $\chi_{-4}.$
The proof follows similarly as in the case $m=1$ with an induction argument for the first sum.
\end{proof}

\subsection{Weighted sums of class numbers required for Theorem~\ref{Theorem3F2}}\label{ClassSum3F2}

We state an asymptotic formula for weighted sums of class numbers which are important to prove Theorem~\ref{Theorem3F2}. For brevity, we sketch the proof as it follows the same arguments from the previous section.

\begin{lemma}\label{AsymptoticsEvenClausen}
If $n$ is a nonnegative integer, then as $q\rightarrow +\infty$ we have
$$
\sum\limits_{s \ even} H^\ast(4q-s^2)s^{2n}=\frac{4}{3}\cdot\frac{(2n)!}{n!(n+1)!}q^{n+1}+o_n(q^{n+1}).
$$
\end{lemma}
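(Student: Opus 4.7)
The plan is to mirror the proof of Lemma~\ref{AsymptoticsEvenLegendre}, substituting a weight $1/2$ theta series whose Fourier coefficients detect even values of $s$ for the cuspidal theta $\eta(8\tau)^3$ used there. A natural choice is $\vartheta(\tau) := \theta(4\tau) = \sum_{s \in \Z} q_\tau^{4s^2}$, a weight $1/2$ theta form on $\Gamma_0(16)$. I would proceed by induction on $n$. For the base case $n=0$, one can either separate the even-$s$ and odd-$s$ parts of $\sum_{s\in\Z} H^*(4q-s^2)$ (controlling the odd-$s$ portion via Eichler's Theorem~\ref{EichlerRelation}), or extract $\sum_{s\text{ even}} H^*(4q-s^2)$ directly as the $(4q)$-th Fourier coefficient of $\pi_{\mathrm{hol}}(\mathcal{H}\cdot\vartheta)$, a weight $2$ quasimodular form whose cuspidal piece contributes $O_\varepsilon(q^{1/2+\varepsilon})$ by Theorem~\ref{Deligne}; the Eisenstein/theta piece supplied by Lemma~\ref{HolProjExplicit1} yields the main term $\tfrac{4}{3}q$.

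For the inductive step, form the Rankin--Cohen bracket $[\mathcal{H},\vartheta]_n$, a nonholomorphic modular form of weight $2n+2$. Applying Proposition~\ref{HolProjProp} together with the decomposition (\ref{HolomorphicProjectionDecomposition}) writes $\pi_{\mathrm{hol}}([\mathcal{H},\vartheta]_n)$ as the sum of (i) the holomorphic bracket $[\mathcal{H}^+,\vartheta]_n$, whose $(4q)$-th Fourier coefficient expands as a linear combination over $0 \leq j \leq n$ of $\sum_{s\text{ even}} s^{2j}(4q-s^2)^{n-j}H^*(4q-s^2)$; (ii) an explicit divisor series arising from Lemma~\ref{HolProjExplicit1}; and (iii) a unary theta term of the shape $\sum_{t} t^{2n+2} q_\tau^{4t^2}$ produced by Theorem~\ref{HolProjExplicit2} after simplification via Proposition~\ref{ExplicitPiHol-Poly}.

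After subtracting the explicit contributions (ii) and (iii), what remains is a cusp form of weight $2n+2$ on $\Gamma_0(16)$, whose $(4q)$-th Fourier coefficient is $O_\varepsilon(q^{n+1/2+\varepsilon}) = o_n(q^{n+1})$ by Deligne's Theorem~\ref{Deligne}. Invoking the induction hypothesis to replace each lower-order sum $\sum_{s \text{ even}} H^*(4q-s^2)s^{2j}$ with $j<n$ by its asymptotic $\tfrac{4}{3}\cdot\tfrac{(2j)!}{j!(j+1)!}q^{j+1}+o(q^{j+1})$, and then applying the Cohen identity
$$
\sum_{0 \leq t \leq n} (-1)^t \frac{(2n-t)!}{t!(n-t)!(n+1-t)!} = 0
$$
from p.~284 of \cite{cohen} (exactly as in the closing step of Lemma~\ref{AsymptoticsEvenLegendre}) collapses the telescoping linear combination and isolates the leading coefficient $\tfrac{4}{3}\cdot\tfrac{(2n)!}{n!(n+1)!}q^{n+1}$, as claimed.

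The main obstacle will be verifying that, after removing the explicit non-cuspidal contributions (ii) and (iii), the remainder is genuinely a cusp form on the relevant congruence subgroup and Nebentypus, so that Deligne's bound can be applied. Since $\vartheta$ is holomorphic but not itself cuspidal (unlike the choice $\eta(8\tau)^3$ used in Lemma~\ref{AsymptoticsOddLegendre}), its constant-term contribution at each cusp must be tracked through Proposition~\ref{BracketProposition}(2) to confirm both the manageable growth hypothesis needed for Proposition~\ref{HolProjProp} and the precise shape of the Eisenstein/theta contributions that exhaust the non-cuspidal part of $\pi_{\mathrm{hol}}([\mathcal{H},\vartheta]_n)$. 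Once this bookkeeping is carried out, the remainder of the argument proceeds exactly as in the Legendre case.
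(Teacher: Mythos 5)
Your proposal follows essentially the same route as the paper's (sketched) proof: the same seed form $g(\tau)=\theta(4\tau)$ of weight $1/2$ on $\Gamma_0(16)$, the same Rankin--Cohen bracket with $\mathcal{H}$, holomorphic projection, Deligne's bound on the cuspidal remainder, and induction closed out by Cohen's identity, together with the correct observation that the non-cuspidal contributions require extra bookkeeping because $\theta(4\tau)$ is not a cusp form. The only divergence is the base case $n=0$, where the paper appeals to a counting argument for Clausen models rather than an Eichler-type relation or a direct computation of the Eisenstein part of the weight $2$ quasimodular projection; any of these pins down the constant $\tfrac{4}{3}$.
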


\begin{proof}[Sketch of the proof]
First, we write
$$
\sum\limits_{s\ {{even}}}H^\ast(4q-s^2)s^{2n}=2^{2n}\sum\limits_{s}H^\ast(4q-4s^2)s^{2n}.
$$
To adapt the proof of Lemma~\ref{AsymptoticsOddLegendre}, let 
$g(\tau)=\theta(4\tau),$
where $\theta(\tau)=\sum\limits_{n\in\Z}q_\tau^{n^2}=1+2q_{\tau}+2q_{\tau}^4+\dots$ is the usual weight 1/2 Jacobi $\theta$ function. Then $g$ is a modular form of weight $1/2$ on $\Gamma_0(16).$   The proof for $\nu=0$ follows from a simple counting argument for the number of Clausen models. Then the proof by induction follows the same steps as in the proof of Lemma~\ref{AsymptoticsOddLegendre} when modified suitably for the weight 1/2 univariate theta function $g(\tau)=\theta(4\tau).$
\end{proof}

\section{Some distributions}\label{ProbTheory}

To obtain Corollaries~\ref{Distribution2F1} and \ref{Distribution3F2}, we will combine
Theorem~\ref{Theorem2F1} and \ref{Theorem3F2} with the following lemma concerning
 the semicircular and Batman distributions.
To make this precise, we first
let $\p$ denote the set of primes, and fix a positive integer $r$. For each prime $p\in\p,$ we have a function
$$f_p:\F_{p^r}\to[-1,1].$$
In this notation, we have the following important lemma.

\begin{lemma}\label{critical-lemma}
If $r$ is a fixed positive integer, then the following are true.

\noindent
(1)  Suppose that the following asymptotics hold for every positive integer $m:$
$$
\sum\limits_{\lambda\in\F_{p^r}}f_{p}(\lambda)^m=\begin{cases}
   o_{m,r}(1)\ \ \  & \hbox{if $m$ is odd} \\ 
  \frac{(2n)!}{2^{2n}(n+1)!n!}+o_{m,r}(1)\ \ \ & \hbox{if $m=2n$ is even.}
  \end{cases}
$$
If $-1\leq a< b\leq 1,$ then
$$
\lim\limits_{p\to\infty}\frac{|\left\{\lambda\in\F_{p^r} \ : \  f_p(\lambda)\in [a,b]\right\}|}{p^r}=\frac{2}{\pi}\int_a^b \sqrt{1-t^2} dt.
$$

\noindent
(2) Suppose that the following asymptotics hold for every positive integer $m:$
$$
\sum\limits_{\lambda\in\F_{p^r}}f_{p}(\lambda)^m=\begin{cases} o_{m,r}(1) \ \ \ \ \ &{\text { if $m$ is odd}}\\ 
\sum\limits_{i=0}^{m} (-1)^i{m\choose i}\frac{(2i)!}{3^m i!(i+1)!}+o_{m,r}(1)  \ \ \ \ 
&{\text { if $m$ is even.}}
\end{cases}
$$
If $-1\leq a<b\leq 1,$ then
$$
\lim\limits_{p\to\infty}\frac{|\left\{\lambda\in\F_{p^r}\ : \ f_p(\lambda)\in [a,b]\right\}|}{p^r}=\frac{3}{4\pi}\int_a^b f(t)dt,
$$
where
$$
f(t)=\begin{cases}
\frac{3-3|t|}{\sqrt{3+6|t|-9t^2}} &\ \ \ \ {\text if}\  \frac{1}{3}<|t|<1 \\ 
\frac{3+3t}{\sqrt{3-6t-9t^2}}+\frac{3-3t}{\sqrt{3+6t-9t^2}} &\ \ \ \ {\text if}\  |t|<\frac{1}{3} \\ 
0 &\ \ \ \ \text{otherwise}.
\end{cases}
$$
\end{lemma}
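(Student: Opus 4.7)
The plan is to apply the method of moments. For each prime $p$, form the empirical probability measure $\mu_p := \tfrac{1}{p^r}\sum_{\lambda\in\F_{p^r}}\delta_{f_p(\lambda)}$, supported on the compact interval $[-1,1]$. The hypothesis then reads $\int t^m\, d\mu_p \to c_m$ for every $m\geq 1$, where $c_m$ is the claimed constant (with $c_0=1$ automatic since the $\mu_p$ are probability measures). Because $[-1,1]$ is compact, the family $\{\mu_p\}$ is tight, and Prokhorov's theorem ensures that every subsequence has a weakly convergent sub-subsequence. Since polynomials are bounded and continuous on $[-1,1]$, any such weak limit $\mu$ satisfies $\int t^m\, d\mu = c_m$ for every $m$. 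The Hausdorff moment problem on a compact interval is determinate (Weierstrass plus Riesz representation), so $\mu$ is uniquely pinned down by the sequence $\{c_m\}$; hence the full sequence $\mu_p$ converges weakly to this unique $\mu$. Both candidate densities are integrable with only integrable singularities and thus define absolutely continuous probability measures, so $\mu(\{a\})=\mu(\{b\})=0$ and the Portmanteau theorem upgrades weak convergence to $\mu_p([a,b])\to\mu([a,b])$ for every $-1\leq a<b\leq 1$.

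It then suffices to verify that each candidate density has the prescribed moments. For (1), the substitution $t=\cos\theta$ reduces the even moments of the semicircle density $\tfrac{2}{\pi}\sqrt{1-t^2}$ to $\tfrac{2}{\pi}\int_0^\pi\cos^{2n}\theta\sin^2\theta\, d\theta$, and the Beta-function evaluation $B(n+\tfrac{1}{2},\tfrac{3}{2})$ gives exactly $\tfrac{(2n)!}{2^{2n}n!(n+1)!}$; odd moments vanish by symmetry. For (2), I first rescale via $t=s/3$, reducing to computing the $m$-th moment for $m$ even of the density $\tfrac{1}{4\pi}\widetilde f(s)$ on $[-3,3]$, where $\widetilde f(s)=\tfrac{3-|s|}{\sqrt{3+2|s|-s^2}}$ on $1<|s|<3$ and the analogous two-summand expression on $|s|<1$. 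Using the $s\mapsto -s$ symmetry (which swaps the two summands of $\widetilde f$ on $|s|<1$ and preserves the outer pieces) and the factorization $3+2s-s^2=(3-s)(s+1)$, the moment integral collapses to $\tfrac{1}{2\pi}\int_{-1}^{3} s^m\sqrt{(3-s)/(s+1)}\, ds$. The substitution $s=1+2\cos\theta$ converts this into $\tfrac{1}{\pi}\int_0^\pi(1+2\cos\theta)^m(1-\cos\theta)\, d\theta$, and a half-angle substitution $\phi=\theta/2$, combined with $1+2\cos 2\phi=(2\cos\phi)^2-1$, rewrites the integrand as $((2\cos\phi)^2-1)^m\cdot 2\sin^2\phi$. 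Expanding by the Binomial Theorem and evaluating each resulting integral via the Beta-function identity $\int_0^{\pi/2}(2\cos\phi)^{2i}\sin^2\phi\, d\phi=\tfrac{\pi}{4}\cdot\tfrac{(2i)!}{i!(i+1)!}$ produces exactly $\sum_i(-1)^i\binom{m}{i}\tfrac{(2i)!}{i!(i+1)!}$, and undoing the rescaling inserts the factor $3^{-m}$.

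The main obstacle is the moment computation for part (2): one must carefully exploit the $s\mapsto -s$ symmetry to collapse the piecewise definition of $\widetilde f$ into a single clean integral on $[-1,3]$ before the nested trigonometric substitutions render the Beta-function machinery applicable. By contrast, the general method-of-moments framework and the semicircle computation in (1) are standard.
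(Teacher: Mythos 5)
Your proposal is correct and follows the same overall skeleton as the paper: both arguments are applications of the method of moments, verifying that the limiting moment sequence matches that of the claimed density and then invoking moment determinacy to upgrade to convergence in distribution. (The paper gets determinacy from the positive radius of convergence of the moment generating function, citing Billingsley; you get it from compact support via Weierstrass, Prokhorov, and the Hausdorff moment problem --- both routes are standard and valid, and your explicit remark that the limit laws are atomless, so that Portmanteau applies to the closed interval $[a,b]$, is a point the paper passes over silently.) The genuine difference is in the key computation for part (2). After the same initial reduction --- using the evenness of the Batman density and the factorizations $3\pm 2s-s^2=(3\mp s)(1\pm s)$ to collapse the piecewise integral to $\frac{1}{2\pi}\int_{-1}^{3}s^m\sqrt{(3-s)/(1+s)}\,ds$ --- the paper evaluates this via Bailey's integral representation of the Appell series $F_1$, the reduction $F_1(a,b_1,b_2;c;1,x)={_2F_1}(a,b_1;c;1)\cdot{_2F_1}(a,b_2;c-b_1;x)$, and Gauss's summation, landing on a terminating ${_2F_1}$ at $4$. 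You instead use the substitutions $s=1+2\cos\theta$ and $\phi=\theta/2$ together with $1+2\cos 2\phi=(2\cos\phi)^2-1$, expand by the Binomial Theorem, and evaluate each term by the Beta-function identity $\int_0^{\pi/2}(2\cos\phi)^{2i}\sin^2\phi\,d\phi=\frac{\pi}{4}\cdot\frac{(2i)!}{i!(i+1)!}$; I checked that this yields exactly $\sum_i(-1)^i\binom{m}{i}\frac{(2i)!}{i!(i+1)!}$. Your route is more elementary and self-contained (no special-function identities beyond the Beta function), while the paper's route makes the structural connection to hypergeometric series explicit, which is thematically natural for this paper. Either proof is complete.
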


\begin{proof}  This result follows via a standard application of the method of moments in probability theory (for example, see Theorems 30.1 and 30.2 of \cite{billings}).
We prove these two cases separately.

\smallskip
\noindent
(1) For each $p\in\p$, consider the probability space $(\Omega_p,\mathcal{F}_p,\mu_p)$, where $\Omega_p=\F_{p^r}, \mathcal{F}_{p}=\mathcal{P}(\F_{p^r}),$ and $\mu_p(A)=|A|/p^r$ for all $A\in\mathcal{F}_{p}.$
For the random variable $X_p=f_{p},$ we have
$$
\lim\limits_{p\to\infty} E(X_p^m)=\begin{cases}
   0\ \ \ & \hbox{if $m$ is odd} \\ 
  \frac{(2n)!}{2^{2n}(n+1)!n!}\ \ \ & \hbox{if $m=2n$ is even.}
\end{cases}
$$
Furthermore, consider the probability space $(\Omega,\mathcal{F},\mu_{ST}),$ where $\Omega:=[-1,1]$, $\mathcal{F}$ is the collection of Lebesgue-measurable subsets of $\Omega$, and $\mu_{ST}$ is the measure 
$\mu_{ST}([a,b]):=\frac{2}{\pi}\int_a^b \sqrt{1-t^2}\ dt$.

For the random variable $X:\Omega\to[-1,1],$ defined by $X(t):=t,$ we have
$$
E(X^m)=\begin{cases}
   0\ \ \ & \hbox{if $m$ is odd} \\ 
  \frac{(2n)!}{2^{2n}(n+1)!n!}\ \ \  & \hbox{if $m=2n$ is even.}
\end{cases}
$$
Since the moment-generating function has a positive radius of convergence, the distribution of $X$ is determined by its moments, and thus $X_p$ converges in distribution to $X.$
Therefore, for $-1\leq a<b\leq 1,$ we have
\begin{displaymath}
\begin{split}
\lim\limits_{p\to\infty}\frac{|\left\{\lambda\in\F_{p^r}\ : \  f_p(\lambda)\in [a,b]\right\}|}{p^r}
&=\lim\limits_{p\to\infty}\mu_p(\{ a\leq X_p\leq b\})
=\mu_{ST}(a \leq X\leq b).
\end{split}
\end{displaymath}

\smallskip
\noindent
(2) The proof of the second case follows {\it mutatis mutandis}. The only change is that
$$
\int_{-3}^3 f(t)t^m dt=
\begin{cases} 0 \ \ \ \ \ &{\text { if $m$ is odd}}\\ 
4\pi\sum\limits_{i=0}^{m} (-1)^i{m\choose i}\frac{(2i)!}{i!(i+1)!}  \ \ \ \ 
&{\text { if $m$ is even,}}
\end{cases}
$$
where $f(t)$ is as in Corollary~\ref{Distribution3F2}.
Since $f$ is odd, it is clear that $\int_{-3}^3 f(t)t^m dt=0$ when $m$ is odd.
By symmetry, when $m$ is even, we have
$$
\int_{-3}^3 f(t)t^m dt=2\int_{-1}^3\sqrt{\frac{3-t}{1+t}}\cdot t^m dt.
$$
By a simple change of variables, we see that 
$$
\int_{-3}^3 f(t)t^m dt=8\int_{0}^1t^{-1/2}(1-t)^{1/2}(1-4t)^m dt.
$$
Thankfully, we can express this integral in terms of the Appell hypergeometric series
$$
F_1(a,b_1,b_2;c;x,y):=\sum\limits_{k,n=0}^\infty\frac{(a)_{k+n}(b_1)_k(b_2)_n}{(c)_{k+n} k!n!}\cdot x^ky^n,
$$
where $c$ is not a nonnegative integer, and  $(r)_n:=\prod\limits_{k=0}^{n-1}(r+k)$ for $n\geq1$, and $(r)_0=1.$
By a formula of Bailey 
(see page 77, (4) of \cite{bailey}), we have
$$
 \int_0^1 t^u\left(1-\frac{t}{a}\right)^v\left(1-\frac{t}{b}\right)^w dt=\frac{1}{u+1}\cdot F_1\left(u+1, -v,-w; u+2;\frac{1}{a},\frac{1}{b}\right).
 $$ 
By letting $u=-\frac{1}{2}, v=\frac{1}{2}, w=m,  a=1,$ and $b=\frac{1}{4},$  we obtain\footnote{This series converges since $-m$ is negative integer.}
\begin{equation}\label{integral}
\int_{-3}^3 f(t)t^m dt=16\cdot F_1\left(\frac{1}{2},-\frac{1}{2},-m;\frac{3}{2};1,4\right).
\end{equation}
To find an exact formula, we will need the  classical hypergeometric series
$$
{_2F_1}\left(\begin{array}{cc}
a & b\\
~&c
\end{array}|z\right):=\sum\limits_{n=0}^\infty\frac{(a)_n(b)_n}{(c)_n}\cdot\frac{z^n}{n!},
$$
where $c$ cannot be a nonnegative integer.
It is straightforward to see that
$$
F_1(a,b_1,b_2;c;1,x)={_2F_1}\left(\begin{array}{cc}
a & b_1\\
~&c
\end{array}|1\right)\cdot {_2F_1}\left(\begin{array}{cc}
a & b_2\\
~&c-b_1
\end{array}|x\right).
$$
Substituting this identity into \eqref{integral}, we obtain
\begin{equation}\label{product}
\int_{-3}^3 f(t)t^m dt=16\cdot {_2F_1}\left(\begin{array}{cc}
\frac{1}{2} & -\frac{1}{2}\\
~&\frac{3}{2}
\end{array}|\ 1\right)\cdot {_2F_1}\left(\begin{array}{cc}
\frac{1}{2} & -m\\
~& 2
\end{array}|\ 4\right).
\end{equation}
Using Gauss' identity (see (1.3) of \cite{bailey}, and \cite{Gauss})
$$
{_2F_1}\left(\begin{array}{cc}
a & b\\
~&c
\end{array}|\ 1 \right)=\frac{\Gamma(c)\Gamma(c-a-b)}{\Gamma(c-a)\Gamma(c-b)},
$$
where $\re(c-a-b)>0,$ we find that
\begin{equation}
\int_{-3}^3 f(t)t^m dt=4\pi\cdot {_2F_1}\left(\begin{array}{cc}
\frac{1}{2} & -m\\
~& 2
\end{array}|\ 4\right)=4\pi\sum_{k=0}^{m}(-1)^k{m\choose k}\frac{(2k)!}{k!(k+1)!}.\notag
\end{equation}
The claim in the proposition follows by an elementary rescaling.
\end{proof}

\section{Proofs of Theorems~\ref{Theorem2F1} and \ref{Theorem3F2} and Corollaries~\ref{Distribution2F1} and \ref{Distribution3F2}}\label{Proofs}

We now prove Theorems~\ref{Theorem2F1} and \ref{Theorem3F2}, and their corollaries.

\begin{proof}[Proof of Theorem~\ref{Theorem2F1}]
Proposition~\ref{Moments2F1} gives a formula for the power moments of the values of the hypergeometric functions ${_2F_1}(\lambda)_q$ in terms of weighted sums of class numbers.
Lemma~\ref{StandardBounds} (2) reduces the statement to Lemmas~\ref{AsymptoticsEvenLegendre} and \ref{AsymptoticsOddLegendre}, thereby concluding the proof.
\end{proof}

\begin{proof}[Proof of Corollary~\ref{Distribution2F1}]
After rescaling, the claim follows from Theorem~\ref{Theorem2F1} and Lemma~\ref{critical-lemma}~(1).
\end{proof}

\begin{proof}[Proof of Theorem~\ref{Theorem3F2}]
By Proposition~\ref{EvenMomentsClausen} and Lemma~\ref{StandardBounds} (2), we have that
$$
\sum\limits_{\lambda\in\F_{p^r}} a_\lambda^\Cl(p^{r})^{2n}=\frac{(2n)!}{n!(n+1)!}\cdot p^{rn+r}+o_n(p^{rn+r}) \ \ \ \ \ 
{\text {\rm and}}\ \ \ \ \ 
\sum\limits_{\lambda\in\F_{p^r}} \phi(-\lambda)a_\lambda^\Cl(p^r)^{2n}=o_n(p^{rn+r}),
$$
for all positive integers $n.$
Since ${_3F_2}(\beta)_q=\phi(-\beta){_3F_2}(1/\beta)_q$ for all $\beta\in\F_q^{\times}$ (see Theorem 4.2 of \cite{greene}), Theorem~\ref{Trace3F2} gives us that 
$$
\phi(\lambda+1)a_\lambda^\Cl(q)^2=\phi(\lambda+1)a_{-\lambda-1}^\Cl(q)^2.
$$
Applying the binomial theorem to the equation in Theorem~\ref{Trace3F2} concludes the proof.
\end{proof}

\begin{proof}[Proof of Corollary~\ref{Distribution3F2}] After rescaling,
the claim follows  from Theorem~\ref{Theorem3F2} and Lemma~\ref{critical-lemma}~(2).
\end{proof}


\begin{thebibliography}{99}
\bibitem{Ahlgren}
S. Ahlgren, \emph{Gaussian hypergeometric series and combinatorial congruences}, 
Symbolic computation, number theory, special functions, physics, and combinatorics (Gainesville, Fl., 1999),
Dev. Math. \textbf{4}, Kluwer Acad. Publ., Dordrecht, 2001, 1-12.



\bibitem{ahlgren-ono}
S. Ahlgren and K. Ono, {\it Modularity of a certain Calabi-Yau threefold}, Montash. Math. {\bf 129} (3) (2000),  177--190.

\bibitem{AO} S. Ahgren and K. Ono, \emph{A Gaussian hypergeometric series evaluation and Ap\'ery number congruences},
J. Reine Angew. Math. \textbf{518} (2000), 187-212.

\bibitem{ono-3}
S. Ahlgren, K. Ono, and D. Penniston, {\it Zeta functions of an infinite family of $K3$ surfaces}, Amer. J. Math.,
{\bf  124} (2) (2002), 353--368.

\bibitem{bailey}
W. Bailey, {\it Generalized hypergeometric series}, Cambridge Tracts in Mathematics and Mathematical Physics 
\textbf{32}, Cambridge Univ. Press, Cambridge, 1935.

\bibitem{BK} R. Barman and G. Kalita, \emph{Hypergeometric functions over $\F_q$ and traces of Frobenius for elliptic curves},
Proc. Amer. Math. Soc. \textbf{141} (2013), 3403-3410.

\bibitem{BKS}
R. Barman, G. Kalita and N. Saikia, \emph{Hyperelliptic curves and values of Gaussian hypergeometric series}, 
Arch. Math. (Basel), \textbf{102} (2014), 345-355.

\bibitem{BSM-JNT}
R. Barman, D. McCarthy, and N. Saikia, \emph{Summation identities and special values of hypergeometric series in the $p$-adic setting}, J. Number Th., \textbf{153} (2015), 63-84.

\bibitem{BRS}
R. Barman, H. Rahman and N. Saikia, \emph{Counting points on Dwork hypersurfaces and $p$-adic hypergeometric  functions}, Bull. Aus. Math. Soc., \textbf{94} (2016), no. 2, 208-2016.

\bibitem{BS} R. Barman and N. Saikia, \emph{$p$-adic Gamma function and the trace of Frobenius of elliptic curves},
J. Number Th. \textbf{140} (2014), 181-195.


\bibitem{BarmanTripathi} R. Barman and M. Tripathi, \emph{Certain product formulas and values of Gaussian hypergeometric series}, Res. Number Th. \textbf{6} (2020), Art. 26.


\bibitem{billings}
P. Billingsley, {\it Probability and Measure}, 3rd ed., John Wiley \& Sons, Inc. (1995).

\bibitem{birch}
B. J. Birch, {\it How the number of points of an elliptic curve over a fixed prime field varies}, J. London Math. Soc., {\bf 43} (1968), 57--60.

\bibitem{BFOR} K. Bringmann, A. Folsom, K. Ono, and L. Rolen, \emph{Harmonic Maass forms and mock modular forms: Theory and applications}, Amer. Math. Soc. Colloq. \textbf{64}, Amer. Math. Soc., Providence, 2017.

\bibitem{BKP} K. Bringmann, B. Kane, and S. Pujahari, \emph{Distribution of moments of traces of Frobenius in arithmetic progressions and holomorphic projection,} (\texttt{https://arxiv.org/pdf/2010.15325.pdf}), preprint.


\bibitem{SatoTateProof}
L. Clozel, M. Harris, N. Shepherd-Barron, and R. Taylor (2008). {\it Automorphy for some $\ell$-adic lifts of automorphic mod $\ell$ Galois representations}, Publ. Math. Inst. Hautes Études Sci. {\bf 108} (2008), 1--181.

\bibitem{cohen}
H. Cohen, {\it Sums involving the values at negative integers of $L$-functions of quadratic characters}, Math. Ann., {\bf 217} (1975), 271--285.

\bibitem{stromberg}
 H. Cohen and F. Str\"omberg, \emph{ Modular forms: A classical approach}, Graduate Studies in Mathematics, Vol \textbf{179}, Amer. Math. Soc., Providence, 2017.
 


\bibitem{DKSSVW} C. Doran, T. Kelly,  A. Salerno, S. Sperber, J. Voight, and U. Whitcher, \emph{Hypergeometric decomposition of symmetric $K3$ quartic pencils},  Res. Math. Sci. \textbf{7} (2020), Art. 7.

\bibitem{Eichler} M. Eichler, \emph{On the class number of imaginary quadratic fields and the sums of divisors of natural numbers}, J. Indian Math. Soc. \textbf{15} (1955), 153-180.

\bibitem{Eichler2} M. Eichler, \emph{\"Uber die Darstellbarkeit von Modulformen durch Thetareihen}, J. Reine Angew. Math.
\textbf{195} (1955), 156-171.

\bibitem{ono2}
A. El-Guindy, and K. Ono, {\it Hasse invariants for the Clausen elliptic curves}, Ramanujan J., {\bf 31} (1-2) (2013), 3--13.

\bibitem{Drew} F. Fit\'e, K. Kedlaya, and A. V.  Sutherland, \emph{Sato-Tate groups of abelian threefolds: a preview of the classification,}  Arithmetic Geometry, Cryptography, and Coding Theory, Cont. Math.  \textbf{770} (2021), 
Amer. Math. Soc., Providence, 103-129.

\bibitem{fop}
S. Frechette, K. Ono, and M. Papanikolas, {\it Gaussian hypergeometric functions and traces of Hecke operators}, Int. Math. Res. Not. {\bf (60)} (2004), 3233--3262.

\bibitem{Fuselier} J. Fuselier, \emph{Hypergeometric functions over $\F_p$ and relations to elliptic curves and modular forms},
Proc. Amer. Math. Soc. \textbf{138} (2010), 109-123.

\bibitem{FuselierHecke} J. Fuselier, \emph{Traces of Hecke operators in level 1 and Gaussian hypergeometric functions},
Proc. Amer. Math. Soc. \textbf{141} (2013), 1871-1881.

\bibitem{FM} J. Fuselier and D. McCarthy, \emph{Hypergeometric type identities in the $p$-adic setting and modular forms}, Proc. Amer. Math. Soc. \textbf{144} (2016), 1493-1508.


\bibitem{Gauss}
C. F. Gauss, {\it Disquisitiones generales circa seriem infinitam $1+\frac{\alpha\beta}{1\cdot\gamma}x
+\frac{\alpha(\alpha+1)\beta(\beta+1)}{1\cdot2\cdot\gamma(\gamma+1)}x^2
+\frac{\alpha(\alpha+1)(\alpha+2)\beta(\beta+1)(\beta+2)}{1\cdot2\cdot3\cdot\gamma(\gamma+1)(\gamma+2)}x^3+$ etc.}, Cambridge Univ. Press, Cambridge, 2011, 233-279.

\bibitem{GreenePhD} J. Greene, \emph{Character sum analogues for hypergeometric and generalized hypergeometric functions over finite fields}, Thesis (Ph.D.)-University of Minnesota, 1984.

\bibitem{greene}
J. Greene, {\it Hypergeometric functions over finite fields}, Trans. Amer. Math. Soc. {\bf 301} (1) (1987), 77--101.

\bibitem{GreeneStanton} J. Greene and D. Stanton, \emph{A character sum evaluation and Gaussian hypergeometric series}, J. Number Theory \textbf{23} (1986), 136-148.

\bibitem{griffin-ono-tsai}
M. Griffin, K. Ono, and W. Tsai {\it Heights of points on elliptic curves over $\Q$}, (\texttt{arXiv:2007.09514v3}),  Proc. Amer. Math. Soc., accepted for publication.


\bibitem{Husemoller} D. Husem\"oller, \emph{Elliptic curves}, Springer, GTM Vol. 111 (2004).

\bibitem{rosen}
K. Ireland and M.Rosen, {\it A Classical introduction to modern number theory}, 2nd ed,  Springer,  GTM Vol. 84, New York, (1990).

\bibitem{ISS} M. Ishibashi, H. Sato and K. Shiratani, \emph{On the Hasse invariants of elliptic curves}, Kyushu
J. Math.,\textbf{48} (1994), no. 2,  pages 307-321.

\bibitem{KC} G. Kalita and A. Chetry, \emph{Congruences for generalized Ap\'ery numbers and Gaussian hypergeometric series}, Res. Number Th. \textbf{3} (2017), Art. 5.

\bibitem{Katz} N. Katz, \emph{Exponential sums and differential equations}, Annals of Math. Studies, Princeton Univ. Press, Princeton, 1990.

\bibitem{Koike} M. Koike, \emph{Orthogonal matrices obtained from hypergeometric series over finite fields
and elliptic curves over finite fields}, Hiroshima Math. J. \textbf{25} (1995), pages 43-52.

\bibitem{Lennon1} C. Lennon, \emph{Trace formulas for Hecke operators, Gaussian hypergeometric functions, and the modularity of a threefold}, J. Numb. Th. \textbf{131} (2011), 2320-2351.

\bibitem{Lennon} C. Lennon, \emph{Gaussian hypergeometric evaluations of traces of Frobernius for elliptic curves},
Proc. Amer. Math. Soc. \textbf{139} (2011), 1931-1938.

\bibitem{LLL} W.-C. Li, T. Liu, and L. Long, \emph{Potentially $\GL_2$-type Galois representations associated to noncongruence modular forms}, Trans. Amer. Math. Soc. \textbf{371} (2019), 5341-5377.

\bibitem{Long}
L. Long, R. Ramakrishna, H. Swisher and F.-T. Tu, \emph{Hypergeometric functions over finite fields}, (\texttt{arXiv:1510.02575v4}),  preprint (2019).


\bibitem{McCarthy} D. McCarthy, \emph{$_3F_2$ hypergeometric series and periods of elliptic curves},
Int. J. of Number Th., \textbf{6} (2010), no. 3, pages 461-470.

\bibitem{McCarthyRV} D. McCarthy, \emph{On a supercongruence conjectrue of Rodriguez-Villegas},
Proc. Amer. Math. Soc. \textbf{140} (2012), 2241-2254.

\bibitem{McCarthyPJM} D. McCarthy, \emph{The trace of Frobenius of elliptic curves and the $p$-adic Gamma-function},
Pacific J. Math. \textbf{261} (2013), 219-236.

\bibitem{McCarthyRIMS}
D. McCarthy, \emph{The number of $\F_q$-points on Dwork hypersurfaces and hypergeometric functions}, Res. in Math. Sci., \textbf{4} (2017), no. 4, 1-15.


\bibitem{McCarthyOsburn} D. McCarthy and R. Osburn, \emph{A $p$-adic analogue of a formula of Ramanujan}, Arch. Math.
(Basel) \textbf{91} (2008), 492-504.


\bibitem{MOS} D. McCarthy, R. Osburn, and A. Straub, \emph{Sequences, modular forms, and cellular integrals},
Math. Proc. Cambridge Philos. Soc. \textbf{168} (2020), 379-404.

\bibitem{MP} D. McCarthy and M. Papanikolas, \emph{A finite field hypergeometric function associated to eigenvalues of a Siegel eigenform}, Int. J. Number Th. \textbf{11} (2015), 2431-2450.

\bibitem{mertens}
M. Mertens, {\it Mock modular forms and class numbers of quadratic forms}, 
Thesis (Ph.D.)-University of Cologne, 2014, 1--86.

\bibitem{mertensRMS} M. Mertens, \emph{Mock modular forms and class number relations}, Res. Math. Sci.
\textbf{1} (2014), Art. 6.

\bibitem{Mortenson} E. Mortenson, \emph{A $p$-adic supercongruence conjecture of van Hamme}, Proc. Amer. Math. Soc.
\textbf{136} (2008), 4321-4328.

\bibitem{ono}
K. Ono, {\it Values of Gaussian hypergeometric series}, Trans. Amer. Math. Soc. {\bf 350} (3) (1998), 1205--1223.

\bibitem{ono-book}
K. Ono, {\it The web of modularity: Arithmetic of the coefficients of modular forms and $q$-series}, CBMS, Regional Conference series in Mathematics, 102, Amer. Math. Soc., Providence, 2004.

\bibitem{OsburnSchneider} R. Osburn and C. Schneider, \emph{Gaussian hypergeometric series and supercongruences},
Math. Comp. \textbf{78} (2009), 275-292.

\bibitem{OSS} R. Osburn, B. Sahu, and A. Straub, \emph{Supercongruences for sporadic sequences}, Proc. Edinburgh Math. Soc. \textbf{59} (2016), 503-518.

\bibitem{OSZ} R. Osburn, A. Straub, and W. Zudilin, \emph{A modular supercongruence for $_6F_5$: an Ap\'ery-like story},
Ann. Inst. Fourier (Grenoble) \textbf{68} (2018), 1987-2004.

\bibitem{OZ} R. Osburn and W. Zudilin, \emph{On the (K.2) supercongruence of Van Hamme},
J. Math. Anal. Appl. \textbf{433} (2016), 706-711.

\bibitem{Papanikolas} M. Papanikolas, \emph{A formula and a congruence for Ramanujan's $\tau$-function}, Proc. Amer. Math.
Soc. \textbf{134} (2006), 333-341.

\bibitem{PV} L. Pastur and V. Vasilchuk, \emph{On the moments of traces of matrices of classical groups}, Comm. Math. Physics
\textbf{252} (2004), 149-166.

\bibitem{PujahariSaikia} S. Pujahari and N. Saikia, \emph{Traces of Hecke operators in level 1 and $p$-adic hypergeometric functions}, Ramanujan J. \textbf{52} (2020), 519-539.

\bibitem{RV}  D. Roberts and F. Rodriguez Villegas, \emph{Hypergeometric motives}, (\texttt{https://arxiv.org/abs/2109.00027}),
preprint.

\bibitem{Rouse}
 J. Rouse, \emph{Hypergeometric functions and elliptic curves}, Ramanujan J., \textbf{12} (2006), no. 2, pages 197-205.

\bibitem{schoof}
R. Schoof, {\it Nonsingular plane cubic curves over finite fields}, J. Comb. Theory Ser. A {\bf 46} (2) (1987), 183--211. 
 
\bibitem{silverman}
 J. Silverman, \emph{ The arithmetic of elliptic curves}, Springer Verlag, New York, 1986. 

\bibitem{TY} F.-T. Tu and Y. Yang, \emph{Evaluation of certain hypergeometric functions over finite fields},
SIGMA \textbf{14} (2018), Art.  50.

\bibitem{Zagier1} D. Zagier, \emph{Nombres de classes et formes modulaires de poids 3/2}, C. R. Acad. Sci. Paris S\'er. A-B \textbf{281} (1975), Ai, A883-A886.


\bibitem{Z} W. Zudilin, \emph{A hypergeometric version of the modularity of rigid Calabi-Yau manifolds},
SIGMA \textbf{14} (2018), Art. 86.

\end{thebibliography}
\end{document}